\titleformat{\section}{\Large\bfseries}{\thesection.}{4pt}{}
\titleformat{\subsection}{\large\bfseries}{\thesection.\arabic{subsection}.}{4pt}{}
\titleformat{\subsubsection}{\bfseries}{\thesection.\arabic{subsection}.\arabic{subsubsection}.}{4pt}{}
\titleformat*{\paragraph}{\bfseries}
\titleformat*{\subparagraph}{\bfseries}
\newtheorem{theorem}{Theorem}[section]
\newtheorem{lemma}[theorem]{Lemma}
\newtheorem{proposition}[theorem]{Proposition}
\theoremstyle{definition}
\newtheorem{definition}[theorem]{Definition}
\newtheorem{remark}[theorem]{Remark}
\newcommand{\RN}{\mathbb{R}^N}
\newcommand{\Rb}{\mathbb{R}}
\newcommand{\Lc}{\mathscr{L}}
\newcommand{\Ec}{\mathcal{E}}
\newcommand{\Hc}{\mathscr{H}}
\newcommand{\Oc}{\mathcal{O}}
\newcommand{\Cc}{\mathcal{C}}
\newcommand{\Mc}{\mathcal{M}}
\newcommand{\Tc}{\mathcal{T}}
\newcommand{\Dc}{\mathcal{D}}
\newcommand{\Gc}{\mathcal{G}}
\newcommand{\Fc}{\mathcal{F}}
\newcommand{\Vc}{\mathscr{V}}
\numberwithin{equation}{section}
\title[Blowup solutions for a non-variational semilinear parabolic system] 
      {Construction and stability of blowup solutions for a non-variational semilinear parabolic system}
\author[T. Ghoul, V. T. Nguyen, H. Zaag]{}
\subjclass{Primary: 35K50, 35B40; Secondary: 35K55, 35K57.}
 \keywords{Blowup solution, Blowup profile, Stability, Semilinear parabolic system}
 \email[T. Ghoul]{teg6@nyu.edu}
 \email[V. T. Nguyen]{Tien.Nguyen@nyu.edu}
 \email[H. Zaag]{Hatem.Zaag@univ-paris13.fr}
\thanks{H. Zaag is supported by the ERC Advanced Grant no. 291214, BLOWDISOL and by the ANR project ANA\'E ref. ANR-13-BS01-0010-03. \\ -----------------\\ \today}
\begin{document}
\maketitle

\centerline{\scshape Tej-Eddine Ghoul$^\dagger$, Van Tien Nguyen$^\dagger$ and Hatem Zaag$^\ast$}
\medskip
{\footnotesize
 \centerline{$^\dagger$New York University in Abu Dhabi, P.O. Box 129188, Abu Dhabi, United Arab Emirates.}
  \centerline{$^\ast$Universit\'e Paris 13, Sorbonne Paris Cit\'e, LAGA, CNRS (UMR 7539), F-93430, Villetaneuse, France.}
}

%
%
%

\bigskip

\begin{abstract} We consider the following parabolic system whose nonlinearity has no gradient structure:
$$\left\{\begin{array}{ll}
\partial_t u = \Delta u + |v|^{p-1}v, \quad & \partial_t v = \mu \Delta v + |u|^{q - 1}u,\\
u(\cdot, 0) = u_0, \quad & v(\cdot, 0) = v_0,
\end{array}\right. 
$$
in the whole space $\RN$, where $p, q > 1$ and $\mu > 0$. We show the existence of initial data such that the corresponding solution to this system blows up in finite time $T(u_0, v_0)$ simultaneously in $u$ and $v$ only at one blowup point $a$, according to the following asymptotic dynamics:
$$\left\{\begin{array}{c}
u(x,t)\sim \Gamma\left[(T-t) \left(1 + \dfrac{b|x-a|^2}{(T-t)|\log (T-t)|}\right)\right]^{-\frac{(p + 1)}{pq - 1}},\\
v(x,t)\sim \gamma\left[(T-t) \left(1 + \dfrac{b|x-a|^2}{(T-t)|\log (T-t)|}\right)\right]^{-\frac{(q + 1)}{pq - 1}},
\end{array}\right.$$
with $b = b(p,q,\mu) > 0$ and $(\Gamma, \gamma) = (\Gamma(p,q), \gamma(p,q))$. The construction relies on the reduction of the problem to a finite dimensional one and a topological argument based on the index theory to conclude. Two major difficulties arise in the proof: the linearized operator around the profile is not self-adjoint even in the case $\mu = 1$; and the fact that the case $\mu \ne 1$ breaks any symmetry in the problem. In the last section, through a geometrical interpretation of quantities of blowup parameters whose dimension is equal to the dimension of the finite dimensional problem, we are able to show the stability of these blowup behaviors with respect to perturbations in initial data.
\end{abstract}

\section{Introduction.} In this paper we are concerned with finite time blowup for the semilinear parabolic system:
\begin{equation}\label{PS}
\left\{\begin{array}{ll}
\partial_t u = \Delta u + |v|^{p-1}v, \quad & \partial_t v = \mu \Delta v + |u|^{q - 1}u,\\
u(\cdot, 0) = u_0, \quad & v(\cdot, 0) = v_0,
\end{array}\right. 
\end{equation}
in the whole space $\RN$, where
$$p, q > 1, \quad \mu > 0.$$
The local Cauchy problem for \eqref{PS} can be solved in $L^\infty(\RN) \times L^\infty(\RN)$. We denote by $T = T(u_0, v_0) \in (0, +\infty]$ the maximal existence time of the classical solution $(u,v)$ of problem \eqref{PS}. If $T < +\infty$, then the solution blows up in finite time $T$ in the sense that 
$$\lim_{t \to T}(\|u(t)\|_{L^\infty(\RN)} + \|v(t)\|_{L^\infty(\RN)})= +\infty.$$
In that case, $T$ is called the blowup time of the solution. A point $a \in \RN$ is said to be a blowup point of $(u,v)$ if $(u,v)$ is not locally bounded near $(a,T)$ in the sense that $|u(x_n, t_n)| + |v(x_n, t_n)| \to +\infty$ for some sequence $(x_n,t_n) \to (a, T)$ as $n \to +\infty$. We say that the blowup is \textit{simultaneous} if 
\begin{equation}\label{def:simultaneous}
\limsup_{t \to T}\|u(t)\|_{L^\infty(\RN)} = \limsup_{t \to T}\|v(t)\|_{L^\infty(\RN)} = +\infty,
\end{equation}
and that it is \textit{non-simultaneous} if \eqref{def:simultaneous} does not hold, i.e. if one of the two components remains bounded on $\RN \times [0,T)$. For the system \eqref{PS}, it is easy to see that the blowup is always \textit{simultaneous}. Indeed, if $u$ is uniformly bounded on $\RN \times [0, T)$, then the second equation would yield a uniform bound on $v$. More specifically, we say that $u$ and $v$ blow up simultaneously at the same point $a \in \RN$ if $a$ is a blowup point both for $u$ and $v$. \\

\medskip

In the case of a single equation, namely when system \eqref{PS} is reduced to the scalar equation 
\begin{equation}\label{eq:Scalar}
\partial_t u = \Delta u + |u|^{p-1}u, \quad u(\cdot, 0) = u_0, \quad p > 1,
\end{equation}
the blowup question for equation \eqref{eq:Scalar} has been studied intensively by many authors and no list can be exhaustive. Let us sketch the main results for the case of the equation \eqref{eq:Scalar}. Considering $u$ a blowup solution to \eqref{eq:Scalar} and $T$ its blowup time, we know from Giga and Kohn  \cite{GKiumj87} that 
$$\forall (x,t) \in \RN \times [0, T), \quad |u(x,t)| \leq C(T-t)^{-\frac{1}{p-1}},$$
for some positive constant $C$, provided that $1 < p \leq \frac{3N + 8}{3N - 4}$ or $1 < p < \frac{N + 2}{N - 2}$ with $u_0 \geq 0$. This result was extended by Giga, Matsui and Sasayama \cite{GMSiumj04} for all $1 < p < \frac{N + 2}{N-2}$ without assuming the non-negativity of initial data. 

The study of the blow-up behavior of solution \eqref{eq:Scalar} is done through the introduction of similarity variables:
$$W_{T,a}(y,s) = (T-t)^{\frac{1}{p-1}}u(x,t), \quad y = \frac{x - a}{\sqrt{T-t}}, \quad s = -\log(T-t),$$
where $a$ may or not be a blow-up point of $u$. From \eqref{eq:Scalar}, we see that $W_{T,a}$ solves the new equation in $(y,s) \in \RN \times [-\log T, +\infty)$:
\begin{equation}\label{eq:Wa}
\partial_s W_{T,a} = \Delta W_{T,a} - \frac{1}{2}y\cdot \nabla W_{T,a} - \frac{W_{T,a}}{p-1} + |W_{T,a}|^{p-1}W_{T,a}.
\end{equation}
According to Giga and Kohn in \cite{GKcpam89} (see also \cite{GKcpam85, GKiumj87}), we know that: If $a$ is a blow-up point of $u$, then
\begin{equation}\label{equ:limitw}
\lim_{t \to T} (T-t)^\frac{1}{p-1}u(a + y\sqrt{T-t},t) = \lim_{s \to +\infty} W_{T,a}(y,s) = \pm \kappa,
\end{equation}
uniformly on compact sets $|y| \leq R$, where $\kappa = (p-1)^{-\frac{1}{p-1}}$.

This estimate has been refined until the higher order by Filippas, Kohn and Liu \cite{FKcpam92}, \cite{FLaihn93}, Herrero and Vel\'azquez \cite{HVdie92}, \cite{HVaihn93},  \cite{VELcpde92}, \cite{VELiumj93}, \cite{VELtams93}. More precisely, they classified the behavior of $W_{T,a}(y,s)$ for $|y|$ bounded, and showed that one of the following cases occurs (up to replacing $u$ by $-u$ if necessary),\\

\noindent $\bullet$ either there exists $k \in \{1, \cdots, N\}$, 
\begin{equation}\label{equ:c1clas}
\sup_{|y| \leq K\sqrt{s}}\left|W_{T,a}(y,s) - \kappa\left(1 + \frac{p-1}{4ps}\sum_{i = 1}^ky_i^2\right)^{-\frac{1}{p-1}} \right| = \mathcal{O}\left(\frac{\log s}{s}\right).
\end{equation}
\noindent $\bullet$ or there exists an even integer $m \geq 4$ and constant $c_\alpha$ not all zero such that 
\begin{equation*}
\sup_{|y| \leq Ke^{\left(\frac{1}{2} - \frac{1}{m}\right)s}} \left|W_{T,a}(y,s) - \kappa\left(1 + e^{-\left(1 - \frac{m}{2}\right)s}\sum_{|\alpha| = m}c_\alpha y^\alpha\right)^{-\frac{1}{p-1}} \right| = o(1),
\end{equation*}
where the homogeneous multilinear form $\sum_{|\alpha| = m}c_\alpha y^\alpha$ is non-negative.

From Bricmont and Kupiainen \cite{BKnon94}, Herrero and Vel\'azquez \cite{HVaihn93}, we have examples of initial data leading to each of the above mentioned scenarios. Moreover, Herrero and Vel\'azquez \cite{HVasnsp92} proved that the asymptotic behavior \eqref{equ:c1clas} is generic in the one dimensional case, and they announced the same for the higher dimensional case, but they never published it. Note also that the asymptotic profile described in \eqref{equ:c1clas} with $k = N$ has been proved to be stable  with respect to perturbations in the initial data or the nonlinearity  by Merle and Zaag in \cite{MZdm97} (see also Fermanian, Merle and Zaag \cite{FMZma00}, \cite{FZnon00}, Nguyen and Zaag \cite{NZens16} for other proofs of the stability).\\

\medskip

As for system \eqref{PS}, much less result is known, in particular in the study of the asymptotic behavior of the solution near singularities. As far as we know, the only available results concerning the blowup behavior are due to Andreucci, Herrero and Vel\'azquez \cite{AHVihp97} and Zaag \cite{Zcpam01} where the system \eqref{PS} is considered with $\mu = 1$. 

When $\mu = 1$, according to Escobedo and Herrero \cite{EHjde91} (see also \cite{EHpams91}), we know that any nontrivial positive solution of \eqref{PS} which is defined for all $x \in \RN$ must necessarily blow up in finite time if 
$$pq > 1, \quad \text{and} \quad \frac{\max\{p,q\} + 1}{pq - 1} \geq \frac{N}{2},$$
and both functions $u(x,t)$ and $v(x,t)$ must blow up simultaneously. See also \cite{EHampa93} for the case of boundary value problems.

In \cite{AHVihp97}, the authors proved that if
\begin{equation} \label{eq:condAHV}
pq > 1, \quad \text{and} \quad q(p(N - 2)) < N + 2 \quad \text{or} \quad p(q(N-2))< N+2,
\end{equation}
then every positive solution $(u,v)$ of \eqref{PS} exhibits the \textit{Type I} blowup, namely that there exists some constant $C > 0$ such that
\begin{equation}\label{eq:blrate}
\|u(t)\|_{L^\infty(\RN)} \leq C\bar{u}(t), \quad \|v(t)\|_{L^\infty(\RN)} \leq C\bar{v}(t),
\end{equation}
where $(\bar{u}, \bar{v})$ solves the following ODE system 
$$\bar{u}' = \bar{v}^p, \quad \bar{v}' = \bar{u}^q, \quad \bar{u}(T) = \bar{v}(T) = +\infty,$$
whose solution is explicitly given by
$$\bar{u}(t) = \Gamma (T-t)^{-\frac{p+1}{pq-1}}, \quad \bar{v}(t) = \gamma(T-t)^{-\frac{q+1}{pq - 1}}$$
where $(\Gamma, \gamma)$ defined by 
\begin{equation}\label{def:Gamgam}
\gamma^p = \Gamma\left(\frac{p+1}{pq - 1}\right),\quad \Gamma^q = \gamma\left(\frac{q + 1}{pq - 1}\right).
\end{equation}
The estimate \eqref{eq:blrate} has also been proved by Caristi and Mitidieri \cite{CMjde94} in a ball under assumptions on $p$ and $q$ different from \eqref{eq:condAHV}. See also Deng \cite{Dzamp96}, Fila and Souplet \cite{FSnodea01} for other results relative to estimate \eqref{eq:blrate}.

The study of blowup solutions for system \eqref{PS} is done through the introduction of the following similarity variables for all $a \in \RN$ ($a$ may or may not be a blowup point):
\begin{equation}\label{def:simiVars}
\begin{array}{c}
\Phi_{T,a}(y,s) = (T-t)^{\frac{p+1}{pq -1}}u(x,t), \quad \Psi_{T,a}(y,s) = (T-t)^{\frac{q + 1}{pq-1}}v(x,t),\\
\\
\text{where}\quad y = \dfrac{x-a}{\sqrt{T - t}}, \quad s = -\log(T-t).
\end{array}
\end{equation}
From \eqref{PS}, $(\Phi_{T,a},\Psi_{T,a})$ (or $(\Phi, \Psi)$ for simplicity) satisfy the following system: for all $(y,s) \in \RN \times [-\log T, +\infty)$,
\begin{equation}\label{eq:PhiPsi}
\begin{array}{ll}
\partial_s\Phi &= \Delta\Phi - \dfrac{1}{2}y\cdot \nabla \Phi - \left(\dfrac{p+1}{pq-1}\right)\Phi + |\Psi|^{p-1}\Psi,\\
&\\
\partial_s\Psi &= \mu\Delta\Psi - \dfrac{1}{2}y\cdot \nabla \Psi - \left(\dfrac{q+1}{pq-1}\right)\Psi + |\Phi|^{p-1}\Phi.
\end{array}
\end{equation}
Assuming \eqref{eq:blrate} holds, namely that
$$\forall a \in \RN, \quad  \|\Phi_{T,a}(s)\|_{L^\infty(\RN)} + \|\Psi_{T,a}(s)\|_{L^\infty(\RN)} \leq C, \quad \forall s \geq -\log T,$$
and considering $a \in \RN$ a blowup point of $(u,v)$, we know from \cite{AHVihp97} that (remind that we are considering the case when $\mu = 1$)\\
\noindent $\bullet$ either $(\Phi_{T,a}, \Psi_{T,a})$ goes to $(\Gamma, \gamma)$ exponentially fast, \\
\noindent $\bullet$ or there exists $k \in \{1, \cdots, N\}$ such that after an orthogonal change of space coordinates and up to replacing $(u,v)$ by $(-u,-v)$ if necessary, 
\begin{equation}\label{eq:beh}
\begin{array}{ll}
\Phi_{T,a}(y,s) &= \Gamma - \dfrac{c_1}{s}(p+1)\Gamma\sum \limits_{i=1}^k(y_i^2 - 2) + o\left(\dfrac{1}{s}\right),\\
&\\
\Psi_{T,a}(y,s) &= \gamma - \dfrac{c_1}{s}(q+1)\gamma\sum\limits_{i=1}^k(y_i^2 - 2) + o\left(\dfrac{1}{s}\right),
\end{array}
\end{equation}
where $(\Gamma, \gamma)$ is given by \eqref{def:Gamgam} and
\begin{equation}\label{eq:valueb}
c_1 = c_1(p,q) = \frac{2pq + p + q}{8pq(p+1)(q+1)},
\end{equation}
and the convergence takes place in $\mathcal{C}^\ell_{loc}(\RN)$ for any $\ell \geq 0$.\\
In the first case, we have other profiles, some of them are different from those occurring in the scalar case of \eqref{eq:Scalar}, see Theorem 3 and 4 in \cite{AHVihp97} for more details. Note that the value of $c_1$ given in \eqref{eq:valueb} was not precised in \cite{AHVihp97}, but we can justify it by explicit computations as in \cite{AHVihp97}.

Beside the results already cited, let us mention to the work by Zaag \cite{Zcpam01} where the author obtained a Liouville theorem  for system \eqref{PS} that improves the results of \cite{AHVihp97}. Based on this theorem, he was able to derive sharp estimates of asymptotic behaviors as well as a localization property for blowup solutions of \eqref{PS}. For other aspects of system \eqref{PS}, especially concerning the blowup set, see Friedman and Giga \cite{FGfsut87}, Mahmoudi, Souplet and Tayachi \cite{MSTjde15}, Souplet \cite{Sjems09}.\\

\medskip

In this paper, we want to study the profile of the solution of \eqref{PS} near blowup, and the stability of such behavior with respect to perturbations in initial data. More precisely, we prove the following result.

\begin{theorem}[Existence of a blow-up solution for system \eqref{PS} with the description of its profile]\label{theo1} Consider $a \in \RN$. There exists $T > 0$ such that system \eqref{PS} has a solution $(u,v)$ defined on $\RN \times [0, T)$ such that:\\
\noindent $(i)$ $u$ and $v$ blow up in finite time $T$ simultaneously at one blowup point $a$ and only there.\\
\noindent $(ii)$ There holds that 
\begin{equation}\label{eq:asyTh1}
\begin{array}{l}
\left\|(T-t)^{\frac{p+1}{pq - 1}}u(x,t) - \Phi^*\left(\dfrac{x - a}{\sqrt{(T-t)|\log (T-t)|}}\right)\right\|_{L^\infty(\RN)} \leq \dfrac{C}{\sqrt{|\log (T-t)|}},\\
\\
\left\|(T-t)^{\frac{q+1}{pq - 1}}v(x,t) - \Psi^*\left(\dfrac{x - a}{\sqrt{(T-t)|\log (T-t)|}}\right)\right\|_{L^\infty(\RN)} \leq \dfrac{C}{\sqrt{|\log (T-t)|}},
\end{array}
\end{equation}
where 
\begin{equation}\label{def:fgpro}
\Phi^*(z) = \Gamma(1 + b|z|^2)^{-\frac{p+1}{pq - 1}} \quad \text{and} \quad  \Psi^*(z) = \gamma(1 + b|z|^2)^{-\frac{q+1}{pq - 1}},
\end{equation}
with $(\Gamma, \gamma)$ given by \eqref{def:Gamgam} and 
\begin{equation}\label{def:val_b}
b = b(p,q,\mu) = \frac{(pq-1)(2pq + p + q)}{4pq(p+1)(q+1)(1+\mu)} > 0.
\end{equation}

\noindent $(iii)\;$ for all $x \neq a$, $(u(x,t), v(x,t)) \to (u^*(x), v^*(x)) \in \Cc^2(\RN \backslash \{0\}) \times \Cc^2(\RN \backslash \{0\})$ with
$$u^*(x) \sim \Gamma \left(\frac{b|x - a|^2}{2|\log |x - a||} \right)^{-\frac{p+1}{pq-1}} \quad \text{and} \quad v^*(x) \sim \gamma \left(\frac{b|x - a|^2}{2|\log |x - a||} \right)^{-\frac{q+1}{pq-1}},$$
as $|x - a| \to 0$.
\end{theorem}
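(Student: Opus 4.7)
The plan is to follow the Bricmont--Kupiainen and Merle--Zaag construction scheme, adapted to the non-variational system \eqref{PS}. The first step is to work in the similarity variables \eqref{def:simiVars} and, by matched asymptotic expansion in powers of $1/s$, to introduce the approximate profile
$$\bar{\Phi}(y,s) = \Gamma\left(1 + \frac{b|y|^2}{s}\right)^{-\frac{p+1}{pq - 1}}, \qquad \bar{\Psi}(y,s) = \gamma\left(1 + \frac{b|y|^2}{s}\right)^{-\frac{q+1}{pq - 1}},$$
with $b$ as in \eqref{def:val_b}; this value is forced by cancelling the $1/s$ residual when $(\bar\Phi,\bar\Psi)$ is substituted into \eqref{eq:PhiPsi}, and the factor $1+\mu$ in $b$ comes out of this computation, reflecting how the asymmetry $\mu\neq 1$ enters from the outset. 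Writing $\Phi=\bar\Phi+q_1$, $\Psi=\bar\Psi+q_2$, the perturbation $q=(q_1,q_2)$ solves a system of the form $\partial_s q = \mathcal{L} q + V(y,s)\,q + B(q) + R(y,s)$, where $V\to 0$ as $s\to\infty$, $R$ is a small forcing of order $1/s^2$ in a weighted $L^\infty$-norm, and $B$ is quadratic in $q$.

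Next, I would carry out the spectral analysis of the linear operator $\mathcal{L}$. Its diagonal consists of two Ornstein--Uhlenbeck operators with distinct diffusion coefficients $1$ and $\mu$ shifted by constants, while the off-diagonal entries come from linearizing $|v|^{p-1}v$ and $|u|^{q-1}u$ at $(\gamma,\Gamma)$. Working mode-by-mode on the generalized Hermite expansion, one diagonalizes the resulting $2\times 2$ symbol at each Hermite level to split the spectrum into: a finite-dimensional strictly positive part coming from levels $n=0$ and $n=1$, a null part at level $n=2$ accommodating the blowup parameter $b$ and the space/time translation invariances, and a negative remainder with spectral gap. The non-self-adjointness of $\mathcal{L}$, present even for $\mu=1$, forces a biorthogonal projection formalism using left and right eigenvectors of the $2\times 2$ symbols. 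When $\mu\neq 1$ the two diagonal operators do not share a Hermite basis, so the joint eigenmodes have to be built as coupled polynomial pairs whose components are tuned to be simultaneous eigenfunctions, and a careful perturbative argument is needed near the null eigenvalue to rule out a Jordan-block degeneracy.

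The third step is the finite-dimensional topological reduction. Decomposing $q(y,s)=\sum_{|\alpha|\le 2}q_\alpha(s)\,h_\alpha(y)+q_-(y,s)$ along the spectral basis, I would define a shrinking set $V_A(s)$ by imposing mode-wise bounds $|q_\alpha(s)|\le C/s^{k_\alpha}$ with powers $k_\alpha$ dictated by the expansion, a weighted $L^\infty$-bound on the stable remainder $q_-$, and matching pointwise bounds on $|y|\ge K\sqrt{s}$ covering the outer self-similar region. Using Duhamel's formula with the semigroup of $\mathcal{L}$, one shows that as long as $q(s)$ stays in $V_A(s)$, only the $N+1$ genuinely unstable modes (one from level $n=0$, $N$ from level $n=1$) can saturate the bound at a first exit time. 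Parametrizing the initial datum at $s_0=-\log T$ by $d\in\Rb^{N+1}$ acting on exactly these modes, a Brouwer-degree / index-theoretic argument in the spirit of \cite{MZdm97} then produces a choice of $d$ for which the trajectory remains in $V_A(s)$ for all $s\ge s_0$, yielding the asymptotics \eqref{eq:asyTh1} of part (ii).

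Parts (i) and (iii) follow from the trapped trajectory by classical arguments: a local no-blowup criterion away from $a$, combined with the interior decay provided by $V_A$, shows that $a$ is the unique blowup point, while $(u^*,v^*)$ is obtained by evaluating the similarity variables along the curve $|x-a|=\sqrt{(T-t)|\log(T-t)|}$ and letting $t\to T$, yielding the logarithmic-square asymptotics. I expect the main obstacle to be the construction and sharp control of the spectral decomposition of $\mathcal{L}$: non-self-adjointness rules out any orthogonal projection, and the case $\mu\neq 1$ destroys the common Hermite basis of the two diagonal blocks, so the eigenmodes and their projectors must be determined level by level with enough precision that the error $R$, the potential $V$, and the nonlinearity $B$ all remain below the sharp $s^{-k_\alpha}$ thresholds needed to close the topological shooting.
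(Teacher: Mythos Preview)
Your overall strategy matches the paper's: similarity variables, linearization about an approximate profile, spectral decomposition of $\Hc+\Mc$, a shrinking set $\Vc_A(s)$, reduction to the $(N+1)$ unstable modes, and a topological argument, followed by the no-blowup criterion and Merle's argument for parts $(i)$ and $(iii)$. A few points, however, are off and one of them matters for closing the bootstrap.

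First, the approximate profile you wrote down is only the leading part $\Phi^*(y/\sqrt s),\,\Psi^*(y/\sqrt s)$; the paper linearizes about the refined profiles
\[
\varphi(y,s)=\Phi^*\!\left(\tfrac{y}{\sqrt s}\right)+\tfrac{2b\Gamma(p\mu+1)}{(pq-1)s},\qquad
\psi(y,s)=\Psi^*\!\left(\tfrac{y}{\sqrt s}\right)+\tfrac{2b\gamma(q+\mu)}{(pq-1)s},
\]
and those $1/s$ corrections are not cosmetic. With only the leading profile, the residual $R$ has a nonzero term of order $1/s$, so its projection on the $n=0$ mode is $O(1/s)$, which is incompatible with trapping $|\theta_0(s)|\le A/s^2$. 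The constants $\Phi_1(0),\Psi_1(0)$ are fixed precisely to kill that $1/s$ term (see the formal analysis in Section~\ref{sec:forap}); only \emph{after} this does the $z^2$-coefficient computation at order $1/s^2$ determine $b$. So your sentence ``$b$ is forced by cancelling the $1/s$ residual'' has the hierarchy reversed, and without the $1/s$ correction the bootstrap cannot close.

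Second, two smaller inaccuracies. The null mode at $n=2$ is not tied to space/time translation invariances; those generate the \emph{positive} eigenvalues $1$ and $1/2$ at levels $n=0,1$, which is exactly why the finite-dimensional reduction has $N+1$ parameters. And there is no Jordan-block issue to rule out: the paper shows by direct computation (Lemma~\ref{lemm:diagonal}) that $\Hc+\Mc$ is diagonalizable, with eigenfunctions $(f_n,g_n),(\tilde f_n,\tilde g_n)$ built as finite combinations of $(h_k,0)$ and $(0,\hat h_k)$. Rather than a biorthogonal left/right eigenvector formalism, the paper works in $L^2_{\rho_1}\times L^2_{\rho_\mu}$ with \emph{two different} Gaussian weights, expands first in the orthogonal Hermite families $h_n,\hat h_n$, and then passes to the $(f_n,g_n)$ basis via an explicit lower-triangular change of coordinates (Lemma~\ref{lemm:DefProjection}); this concrete route is what makes the delicate null-mode identity $P_{2,M}\binom{W_1 g_2}{W_2 f_2}=-2$ and the cancellation $P_{2,M}\binom{R_{1,1}}{R_{2,1}}=0$ verifiable.
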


\begin{remark} The derivation of the blowup profile  \eqref{def:fgpro} can be understood through a formal analysis in Section \ref{sec:forap} below. However, we would like to emphasize on the fact that the particular value of $b = b(p,q,\mu) > 0$ given in \eqref{def:val_b} is crucially needed in various algebraic identities  in the rigorous proof.
\end{remark}

\begin{remark} The initial data for which system \eqref{PS} has a solution blowing up in finite time $T$ at only one blowup point $a$ and verifying \eqref{eq:asyTh1} is given by formula \eqref{eq:intialdata}, which is expressed in the original variables as follows:
\begin{align*}
u_0(x) &= T^{-\frac{p+1}{pq-1}}\left\{\frac{A \Gamma (p+1)}{|\log T|^2} \left(d_0 + d_1 \cdot \frac{x - a}{\sqrt T}\right)\chi_0 \left(\frac{x - a}{K\sqrt{|\log T| T}} \right)\right.\\
& \qquad  + \left. \Phi^*\Big(\frac{|x - a|}{\sqrt {|\log T| T}} \Big) + \frac{2b \Gamma (p\mu + 1)}{|\log T|(pq-1)} \right\},\\
v_0(x) &= T^{-\frac{q+1}{pq-1}}\left\{\frac{A \gamma (q+1)}{|\log T|^2} \left(d_0 + d_1 \cdot \frac{x - a}{\sqrt T}\right)\chi_0 \left(\frac{|x - a|}{K\sqrt{|\log T| T}} \right) \right.\\
& \qquad + \left. \Psi^*\Big(\frac{x - a}{\sqrt {|\log T| T}} \Big) + \frac{2b \gamma (q+ \mu)}{|\log T|(pq-1)} \right\},
\end{align*}
where  $(\Gamma,\gamma)$ is given by \eqref{def:Gamgam}, $A$ and $K$ are positive constants fixed sufficiently large, $d_0 \in \Rb$ and $d_1 \in \RN$ are parameters in our proof, and $\chi_0 \in \Cc^\infty_0([0, +\infty))$ with $\text{supp} (\chi_0) \subset [0,2]$ and $\chi_0 \equiv 1$ on $[0,1]$.
\end{remark}

\begin{remark} We will only give the proof when $N = 1$. Indeed, the computation of the eigenfunctions (Lemma \ref{lemm:diagonal}) of the linearized operator $\Hc + \Mc$ defined in \eqref{def:opH} and \eqref{def:matM}  and the projection of \eqref{eq:LamUp} on the eigenspaces (Lemma \ref{lemm:DefProjection}) become much more complicated when $N \geq 2$. Besides, the ideas are exactly the same.
\end{remark}

\begin{remark} Note that the constructed solution in Theorem \ref{theo1} is of \textit{Type I}, which means that it satisfies \eqref{eq:blrate}. Therefore, our result indicates that there exist solutions to \eqref{PS} exhibiting the \textit{Type I} blowup for all $p, q > 1$ and $N \geq 1$, even when \eqref{eq:condAHV} doesn't hold.
\end{remark}

\begin{remark} The result of Theorem \ref{theo1} holds for more general nonlinearities than \eqref{PS}, namely that the nonlinear terms in \eqref{PS} are replaced by 
$$F(u,v) = |u|^{p-1}u + f(u,v, \nabla u, \nabla v)\quad  \text{and} \quad G(u,v) = |v|^{q-1}v + g(u,v,  \nabla u, \nabla v),$$
where 
$$|f(u,v,\nabla u, \nabla v)|  \leq C(1 + |u|^{p_1} + |v|^{q_1} + |\nabla u|^{r_1} + |\nabla v|^{s_1}),$$
and 
$$|g(u,v,\nabla u, \nabla v)| \leq C(1 + |u|^{p_2} + |v|^{q_2} + |\nabla u|^{r_2} + |\nabla v|^{s_2}),$$
where 
$$0 \leq p_1 < \frac{p(q+1)}{p+1}, \; 0 \leq q_1 < p, \quad 0 \leq r_1 < \frac{p(q+1)}{p + \frac{1}{2}pq + \frac 12}, \quad 0 \leq s_1 < \frac{p(q+1)}{q + \frac{1}{2}pq + \frac 12},$$
and 
$$0 \leq p_2 < q, \; 0 \leq q_2 < \frac{q(p+1)}{q+1}, \quad 0 \leq r_2 < \frac{q(p+1)}{p + \frac{1}{2}pq + \frac 12}, \quad 0 \leq s_2 < \frac{q(p+1)}{q + \frac{1}{2}pq + \frac 12}.$$
Note that in the setting \eqref{def:simiVars}, the terms $f$ and $g$ turn to be exponentially small. Therefore, a perturbation of our method works although we need in addition some parabolic regularity results in order to handle the nonlinear gradient terms (see \cite{EZsema11} and \cite{TZpre15} for such parabolic regularity techniques). For simplicity, we only give the proof when the nonlinear terms are exactly given by $F(u,v) = |v|^{p-1}v$ and $G(u,v) = |u|^{q-1}u$. 
\end{remark}

\begin{remark} Our method can be naturally extended to the system of $m$ equations of the form
\begin{equation}\label{sys:meqs}
\left\{\begin{array}{ll}
\partial_t u_i &= \mu_i \Delta u_i + |u_{i+1}|^{p_i - 1}u_{i+1}, \quad i = 1, 2, \cdots, m-1,\\
\partial_t u_m &= \mu_m \Delta u_m + |u_1|^{p_m - 1}u_1,
\end{array}\right.
\end{equation}
where $p_i > 1$ and $\mu_i > 0$ for $i = 1, 2, \cdots, m$. Up to a complication in parameters, we suspect that our analysis yields the existence of a solution for \eqref{sys:meqs} which blows up in finite time $T$ only at one blowup point $a \in \RN$ and satisfies the asymptotic behavior: for $i = 1, 2, \cdots, m$,
$$(T-t)^{\alpha_i}u_i(x,t) \sim \gamma_i \left(1 + \frac{B|x - a|^2}{(T-t)|\log (T-t)|} \right)^{-\alpha_i} \quad \text{as}\; t \to T,$$
where $B = B(p_i, \mu_i) > 0$, $\gamma_i$ is given by
$$\gamma_1^{p_m} = \gamma_m \alpha_m, \quad \gamma_{i+1}^{p_i} = \gamma_i \alpha_i \quad \text{for}\quad i = 1,2, \cdots, m-1,$$
and 
$$\begin{pmatrix}
\alpha_1\\ \alpha_2 \\ \vdots \\ \alpha_{m-1} \\ \alpha_m
\end{pmatrix} = \begin{pmatrix}
-1 & p_1 & 0 &\cdots &  0\\
0 & -1 & p_2 & 0 &\cdots\\
\vdots & \ddots & \ddots &\ddots & \vdots\\
0 & \cdots & 0 & -1 &p_{m-1}\\
p_m & 0 & \cdots & 0 & -1
\end{pmatrix}^{-1} \begin{pmatrix}
1 \\ 1 \\ \vdots\\ 1 \\1
\end{pmatrix}.
$$
\end{remark}

\bigskip

As a consequence of our techniques, we show the stability of the constructed solution with respect to perturbations in initial data. More precisely, we have the following result.
\begin{theorem}[Stability of the blowup profile \eqref{def:fgpro}] \label{theo2} Let $(\hat u_0, \hat v_0)$ be the initial data of system \eqref{PS} such that the corresponding solution $(\hat u, \hat v)$ blows up in finite time $\hat T$ at only one blowup point $\hat a$ and $(\hat u(x, t), \hat v(x,t))$ satisfies \eqref{eq:asyTh1} with $T = \hat T$ and $a = \hat a$. Then, there exists a neighborhood $\mathscr{W}_0$ of $(\hat u_0, \hat v_0)$ in $L^\infty(\RN) \times L^\infty(\RN)$ such that for any $(u_0, v_0) \in \mathscr{W}_0$, system \eqref{PS} has a unique solution $(u,v)$ with initial data $(u_0, v_0)$ which blows up in finite time $T(u_0,v_0)$ at only one blowup point $a(u_0,v_0)$. Moreover, parts $(ii)$ and $(iii)$ of Theorem \ref{theo1} are satisfied, and
$$|T(u_0, v_0) - \hat T| + |a(u_0,v_0) - \hat a| \to 0$$
as $(u_0,v_0) \to (\hat u_0, \hat v_0)$ in $L^\infty(\RN) \times L^\infty(\RN)$.
\end{theorem}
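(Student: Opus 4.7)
The plan is to leverage the finite-dimensional reduction already developed for Theorem \ref{theo1}, but to run it in a \emph{parametric} way where the blow-up time $T$ and blow-up point $a$ themselves are the free parameters used to absorb the perturbation. Given $(u_0, v_0)$ close to $(\hat u_0, \hat v_0)$ in $L^\infty \times L^\infty$, I would first use continuous dependence of the PDE on initial data to propagate this closeness up to any time $t_0 < \hat T$. At such a $t_0$, the reference solution $(\hat u, \hat v)$ has essentially reached the blow-up profile \eqref{def:fgpro} in the similarity variables centered at $(\hat T, \hat a)$, so $(u(t_0), v(t_0))$ is itself a small perturbation of the ansatz with reference parameters.

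Second, I would introduce the similarity variables \eqref{def:simiVars} but with \emph{candidate} parameters $(T, a)$ in a small neighborhood of $(\hat T, \hat a)$. Decomposing $(\Phi_{T,a}, \Psi_{T,a}) - (\Phi^*, \Psi^*)$ on the eigenspaces of $\Hc + \Mc$ produced by Lemma \ref{lemm:diagonal} yields finitely many expanding/neutral modes (eigenvalues $\geq 0$) and an infinite contracting tail. The contracting tail is handled in exactly the same way as in Theorem \ref{theo1}. The core geometric fact, which gives the name to the last section of the paper, is that the infinitesimal action of the available blow-up parameters on the unstable/neutral coordinates is non-degenerate: differentiating in $T$ excites the $\lambda = 1$ eigenmode, differentiating in each $a_i$ excites the $\lambda = 1/2$ eigenmodes, and the remaining neutral $\lambda = 0$ direction is excited by the $(d_0, d_1)$ corrections already used in the initial data formula of Remark 1.3. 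Thus the dimension of the free parameters matches the dimension of the finite-dimensional problem, and the linearized matching map has maximal rank.

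Third, I would recast this as a shooting problem. For each $(T, a, d_0, d_1)$ in a small box around $(\hat T, \hat a, 0, 0)$, consider the difference between $(u(t_0), v(t_0))$ and the corresponding ansatz, rescaled in the new similarity variables. The projection of this difference on the finite-dimensional unstable subspace defines a continuous map from the parameter box to $\Rb^{N+2}$. Using the non-degeneracy computed above together with the topological argument (index theory, as in the proof of Theorem \ref{theo1}), I would show that there exists a choice of parameters at which this projection vanishes, which is equivalent to the solution being trapped in the stable manifold and therefore blowing up exactly as prescribed by \eqref{eq:asyTh1}. Uniqueness of $(T(u_0,v_0), a(u_0,v_0))$ then follows from the uniqueness of the finite blow-up time and the localization results for blow-up points (the profile \eqref{eq:asyTh1} singles out a unique center), while continuity in $(u_0, v_0)$ is built into the shooting construction: a smaller perturbation produces a smaller allowable parameter deviation.

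The main obstacle I anticipate is the non-degeneracy step. Because the linearization is not self-adjoint (as emphasized in the abstract) and because $\mu \neq 1$ breaks the symmetry between the two components, one cannot invoke orthogonality shortcuts; the Jacobian of the map from $(T - \hat T, a - \hat a, d_0, d_1)$ to the unstable/neutral coordinates must be computed by hand, using the explicit eigenvectors of $\Hc + \Mc$ and the explicit dependence of the ansatz on the parameters, and its determinant must be shown to be uniformly bounded away from zero. A secondary technical point is to guarantee that when the candidate $(T, a)$ varies in the shooting box, the change of similarity variables does not disturb the contracting estimates obtained in Theorem \ref{theo1}; this should follow from the parabolic regularity and from the smoothness of $(\hat u, \hat v)$ away from the blow-up set, but the bookkeeping has to be done carefully.
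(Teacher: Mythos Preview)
Your overall strategy is exactly the paper's: reuse the finite-dimensional reduction of Theorem \ref{theo1}, but treat the blow-up parameters $(T,a)$ as the free shooting parameters in place of $(d_0,d_1)$, then close with the same topological argument. The geometric observation that variation in $T$ excites the $\lambda=1$ mode and variation in each $a_i$ excites a $\lambda=1/2$ mode is precisely the ``geometrical interpretation'' advertised in the last section (see Proposition~\ref{prop:initstab}).

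However, your dimension count is off, and the error is substantive. You propose to also tune the neutral $\lambda=0$ direction by reintroducing $(d_0,d_1)$, giving a shooting map into $\Rb^{N+2}$. This fails on two counts. First, in the stability problem the initial data $(u_0,v_0)$ are \emph{given}; you are not allowed to add the $(d_0,d_1)$ corrections of Remark~1.3 to them, so those parameters are simply unavailable. Second, and more importantly, the neutral mode $\theta_2$ never needs to be killed by a parameter at all: in both the existence and stability proofs it is controlled by the dynamics themselves. Part $(ii)$ of Proposition~\ref{prop:dyn} gives the refined ODE $|\theta_2'(s) + \tfrac{2}{s}\theta_2(s)| \le CA^3 s^{-3}$, and the argument in Section~\ref{sec:redufromdyn} shows this is enough to keep $|\theta_2(s)| < A^4 s^{-2}\log s$ with strict inequality, so $\theta_2$ can never be the component that touches $\partial \Vc_A(s)$. (Incidentally, in the existence proof $(d_0,d_1)$ multiply $\binom{f_0}{g_0}$ and $\binom{f_1}{g_1}$, so they also control the \emph{positive} modes, not the neutral one.)

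The correct count, which is what the paper does, is therefore: exactly $N+1$ free parameters $(T,a)$ to match the $N+1$ strictly unstable coordinates $(\theta_0,\theta_1)$, with the neutral and negative parts handled identically to Theorem~\ref{theo1}. Once you drop the spurious $(d_0,d_1)$ and the neutral mode from the shooting, your outline coincides with the paper's proof.
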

\begin{remark} With the stability result, we expect that the blowup profile \eqref{def:fgpro} is generic, i.e. there exists an open, everywhere dense set $\mathscr{U}_0$ of initial data whose corresponding solution to \eqref{PS} either converges to the steady state \eqref{def:Gamgam} or blows up in finite time at a single point, according the asymptotic behavior \eqref{eq:asyTh1}. In particular, we suspect that a numerical simulation of \eqref{PS} should lead to the profile \eqref{def:fgpro}. Up to our knowledge, the only available proof for the genericity is given by Herrero and Vel\'azquez \cite{HVasnsp92} for the case of equation \eqref{eq:Scalar} in one-dimensional case. As in \cite{HVasnsp92}, a first step towards the genericity of the profile \eqref{def:fgpro} is to classify all possible asymptotic behaviors of the blowup solution of \eqref{PS} which was established in \cite{AHVihp97} (see also \cite{Zcpam01}) in the case when $\mu = 1$. 
\end{remark}

\bigskip

Let us now give the main idea of the proof of Theorem \ref{theo1}. Our proof uses some ideas developed by Merle and Zaag \cite{MZdm97} and Bricmont and Kupiainen \cite{BKnon94} for the equation \eqref{eq:Scalar}. This kind of method has been proved to be successful for various situations including parabolic and hyperbolic equations. For the parabolic equations, we would like to mention the work by Masmoudi and Zaag \cite{MZjfa08} (see also the earlier work by Zaag \cite{ZAAihn98}) for the complex Ginzburg-Landau equation with no gradient structure,
\begin{equation}\label{eq:GL}
\partial_t u = (1 + \imath \beta)\Delta u + (1 + \imath \delta)|u|^{p-1}u - \alpha u,
\end{equation}
where $u(t): x \in \RN \to u(x,t) \in \mathbb{C}$, $p > 1$, $(\beta, \delta, \alpha) \in \Rb^3$ satisfying $$p - \delta^2 - \beta \delta (p+1) > 0.$$
There are also the works by Nguyen and Zaag \cite{NZsns16} for a logarithmically perturbed equation of \eqref{eq:Scalar} (see also Ebde and Zaag \cite{EZsema11} for a weakly perturbed version of \eqref{eq:Scalar}), by Nouaili and Zaag \cite{NZcpde15} for a non-variational complex-valued semilinear heat equation, or the recent work by Tayachi and Zaag \cite{TZpre15} for the nonlinear heat equation with a critical power nonlinear gradient term,
$$\partial_t u = \Delta u + |u|^{p-1}u + \mu |\nabla u|^\frac{2p}{p+1} \quad \text{with}\;\;  p > 3, \; \mu > 0.$$
When $p \to +\infty$, this equation is reduced to 
$$\partial_t u = \Delta u + e^u + \mu |\nabla u|^2,$$
which is studied in \cite{GNZpre16a}. There are also the cases for the construction of multi-solitons for the semilinear wave equation in one space dimension by C\^ote and Zaag \cite{CZcpam13}, for the wave maps by Rapha\"el and Rodnianski \cite{RRmihes12}, for the Schr\"odinger maps by Merle, Rapha\"el and Rodnianski \cite{MRRmasp11}, for the critical harmonic heat flow by Schweyer \cite{Schfa12} and for the two-dimensional Keller-Segel equation by Rapha\"el and Schweyer \cite{RSma14}, Ghoul and Masmoudi \cite{GMarx16}.

One may think that the method used in \cite{MZdm97} and \cite{BKnon94} should work the same for system \eqref{PS} perhaps with some technical complications. This is not the case, since the fact that $\mu \neq 1$ breaks any symmetry in the problem, and makes the diffusion operator associated to \eqref{PS} not self-adjoint. In other words, the method we present here is not based on a simple perturbation of the equation \eqref{eq:Scalar} treated in \cite{MZdm97} and \cite{BKnon94}. More precisely, our proof relies on the understanding of the dynamics of the selfsimilar version \eqref{eq:PhiPsi} around the profile \eqref{def:fgpro}. In the setting \eqref{def:simiVars}, constructing a solution for \eqref{PS} satisfying \eqref{eq:asyTh1} is equivalent to construct a solution for \eqref{eq:PhiPsi} such that
$$\binom{\Lambda}{\Upsilon}(y,s) = \binom{\Phi}{\Psi}(y,s) - \binom{\Phi^*}{\Psi^*}\left(\frac{y}{\sqrt s}\right) \to \binom{0}{0} \quad \text{as} \; s \to +\infty.$$
Satisfying such a property is guaranteed by a condition that $\binom{\Lambda(s)}{\Upsilon(s)}$ belongs to some set $\Vc_A(s) \subset L^\infty(\RN) \times L^\infty(\RN)$ which shrinks to $0$ as $s \to +\infty$ (see Definition \ref{def:VA} below for an example). Since the linearization of system \eqref{eq:PhiPsi} around the profile $\binom{\Phi^*}{\Psi^*}$ gives $N + 1$ positive modes, $\frac{N(N+1)}{2}$
zero modes, and an infinite dimensional negative part (see Lemma \ref{lemm:diagonal} and Remark \ref{rema:012}), we can use the method of \cite{MZdm97} and \cite{BKnon94} which relies on two arguments:\\

\noindent - The use of the bounding effect of the heat kernel (see Proposition \ref{prop:dyn}) to reduce the problem of the control of $\binom{\Lambda(s)}{\Upsilon(s)}$ in $\Vc_A(s)$ to the control of its positive modes. Note that the linearized operator around the profile, that is $\Hc + \Mc$ defined in \eqref{def:opH} and \eqref{def:matM}, is not self-adjoint. This is one of the major difficulties arising in this paper. \\
\noindent - The control of the positive modes thanks to a topological argument based on the index theory.\\

In addition to the difficulties concerning the linearized operator mentioned above, we also deal with the number of parameters in the problem ($p, q$, and $\mu$) leading to actual complications in the analysis. According to the general framework of \cite{MZdm97}, some crucial modifications are needed. In particular, we have to overcome the following challenges:
\begin{itemize}
\item[(i)] Finding the profile $(\Phi^*, \Psi^*)$ is not obvious, in particular in determining the values of $b$ given by \eqref{eq:valueb}, which is crucial in many algebraic identities in the rigorous analysis. See Section \ref{sec:forap} for a formal analysis to justify such a profile. We emphasize that the formal approach actually gives us an appreciated profile to be linearized around (see \eqref{def:varphiys} and \eqref{def:psiys}).
\item[(ii)] Defining the shrinking set $\Vc_A$ (see Definition \ref{def:VA}) to trap the solution. Note that our definition of $\Vc_A$ is different from that of \cite{MZdm97}. Here, we follow the idea of \cite{MZjfa08} to find out such an appreciated definition for $\Vc_A$. In particular, it comes from many relations in our proof, one of them is related to the dynamics of the linearized problem stated in Proposition \ref{prop:dyn}. 
\item[(iii)] A good understanding of the dynamics of the linearized operator $\Hc + \Mc + V$ of equation
\eqref{eq:LamUp} around the appreciated profile $(\varphi, \psi)$ given in \eqref{def:varphiys} and \eqref{def:psiys} is needed, according to the definition of the shrinking
set $\Vc_A$. Because the behavior of the potential $V$ defined in \eqref{def:Vys} inside and outside the \textit{blowup region} is different, the effect of the linearized operator is therefore considered accordingly to this region. Outside the \textit{blowup region}, the linear operator $\Hc + \Mc + V$ behaves as one with fully negative spectrum, which greatly simplifies the analysis in this region (see Section \ref{sec:outerpart}). Inside the \textit{blowup region}, the potential $V$ is considered as a perturbation of the effect of $\Hc + \Mc$, therefore, a good study of the spectral properties of $\Hc + \Mc$ is needed. Note that the linear operator $\Hc + \Mc$ is not diagonal, but it is diagonalized (see Lemma \ref{lemm:diagonal}). Using this diagonalization, we then define the projection on subspaces of the spectrum of $\Hc + \Mc$ (see Lemma \ref{lemm:DefProjection}). 
\end{itemize}

For the proof of single blowup point (part $(i)$ of Theorem \ref{theo1}), we use part $(ii)$ and an extended result of \cite{GKcpam89} that is called \textit{no blow-up under some threshold} criterion for parabolic inequalities (see Proposition \ref{prop:Noblowup}). The derivation of the final profile $(u^*(x), v^*(x))$ (part $(iii)$ of Theorem \ref{theo1}) follows from part $(ii)$ by using the same argument as \cite{ZAAihn98} and \cite{Mercpam92}.\\

The rest of the paper is organized as follows:\\
\noindent - In Section \ref{sec:forap}, we first explain formally how we obtain the profile $(\Phi^*, \Psi^*)$ and give a suggestion for an appreciated profile to be linearized around.\\
\noindent - In Section \ref{sec:formu}, we give a formulation of the problem in order to justify the formal argument. We also give the spectral properties of the linear operator $\Hc + \Mc$ as well as the definition of the projection on eigenspaces of $\Hc + \Mc$. \\
\noindent - In Section \ref{sec:existence}, we give all the argument of the proof of Theorem \ref{theo1} assuming technical results, which are left to the next section.\\
\noindent - Section \ref{sec:dyn} is central in our analysis. It is devoted to the study of the dynamics of the linearized problem. In particular, we prove Proposition \ref{prop:dyn} from which we reduce the problem to a finite dimensional one.\\
\noindent - In Section \ref{sec:stab}, we give the proof of Theorem \ref{theo2}. Since its proof is a consequence of the existence proof (part $(ii)$ of Theorem \ref{theo1}), thanks to a geometrical interpretation of quantities of blowup parameters whose dimension is equal to the dimension of the finite dimensional problem, we only explain the main ideas of the proof there.

\section{A formal analysis.}\label{sec:forap}
In this section, we give a formal analysis leading to the asymptotic behaviors described in \eqref{eq:asyTh1} by means of matching asymptotic. For simplicity, we shall look for $(u,v)$, a positive solution of \eqref{PS} in one dimensional case. By the translation invariant in space, we assume that $(u,v)$ blows up in finite time $T > 0$ at the origin, and write $(\Phi, \Psi)$ instead of $(\Phi_{T,a}, \Psi_{T,a})$ for short. From the transformation \eqref{def:simiVars}, the behavior \eqref{eq:asyTh1} is equivalent to showing that 
\begin{equation}\label{eq:goalformalcl}
\Phi(y,s) \sim \Gamma\left(1 + \frac{b |y|^2}{s}\right)^{-\frac{p+1}{pq - 1}} \;\; \text{and} \;\; \Psi(y,s) \sim \gamma\left(1 + \frac{b |y|^2}{s}\right)^{-\frac{q+1}{pq - 1}},
\end{equation}
as $s \to +\infty$, where $\Gamma$, $\gamma$ are defined in \eqref{def:Gamgam} and $b$ is given in \eqref{def:val_b}.\\

We use here the method of \cite{MZjfa08} treated for the complex Ginzburg-Landau equation, which was slightly adapted from the method of Berger and Kohn \cite{BKcpam88} for equation \eqref{eq:Scalar}.  Following the approach of \cite{MZjfa08}, we try to search formally for system \eqref{eq:PhiPsi} a regular solution $(\Phi, \Psi)$ of the form 
\begin{equation}\label{eq:formPhiPsi}
\begin{array}{ll}
\Phi(y,s) &= \Phi_0\left(\frac{y}{\sqrt{s}}\right) + \frac{1}{s}\Phi_1\left(\frac{y}{\sqrt{s}}\right) + \cdots,\\
\Psi(y,s) &= \Psi_0\left(\frac{y}{\sqrt{s}}\right) + \frac{1}{s}\Psi_1\left(\frac{y}{\sqrt{s}}\right) + \cdots 
\end{array}
\end{equation}

Injecting \eqref{eq:formPhiPsi} into \eqref{eq:PhiPsi} and comparing elements of order $\frac{1}{s^j}$ with $j = 0, 1,\cdots$, we obtain for $j = 0$, 
\begin{equation}\label{eq:Phi0Psi0}
\begin{array}{c}
-\dfrac{z}{2}\Phi_0' - \dfrac{p+1}{pq-1}\Phi_0 + \Psi_0^p = 0,\\
\\
-\dfrac{z}{2}\Psi_0' - \dfrac{q+1}{pq-1}\Psi_0 + \Phi_0^q = 0,\\
\end{array} \quad \text{where}\quad z = \frac{y}{\sqrt{s}},
\end{equation}
and for $j = 1$, 
\begin{equation}\label{def:FG}
\begin{array}{l}
F(z):= \dfrac{z}{2}\Phi_1' + \left(\dfrac{p+1}{pq-1}\right)\Phi_1 - p\Psi_0^{p-1}\Psi_1 - \dfrac{z}{2}\Phi_0' - \Phi_0'' = 0,\\
\\
G(z):= \dfrac{z}{2}\Psi_1' + \left(\dfrac{q+1}{pq-1}\right)\Psi_1 - q\Phi_0^{q-1}\Phi_1 - \dfrac{z}{2}\Psi_0' - \mu\Psi_0'' = 0.\\
\end{array}
\end{equation}

Solving system \eqref{eq:Phi0Psi0} equipped with data at zero
$$\Phi_0(0) = \Gamma \quad \text{and} \quad \Psi_0(0) = \gamma,$$
we derive
\begin{equation}\label{eq:solPhi0Psi0}
\Phi_0(z) = \Gamma(1 + bz^2)^{-\frac{p+1}{pq-1}} \quad \text{and} \quad \Psi_0(z) = \gamma(1 + bz^2)^{-\frac{q+1}{pq-1}},
\end{equation}
for some integration constant $b$, and $(\Gamma, \gamma)$ is given by \eqref{def:Gamgam}. Since we want $(\Phi, \Psi)$ to be regular, we impose the condition 
$$b > 0.$$

Let us now determine the value of $b$ in \eqref{eq:solPhi0Psi0}. To do so, we first evaluate $F$ and $G$ at $z = 0$ by using \eqref{eq:solPhi0Psi0} to find 
\begin{equation*}
\begin{array}{c}
\left(\dfrac{p+1}{pq-1}\right)\Phi_1(0) - p\gamma^{p-1}\Psi_1(0) + 2b\left(\dfrac{p+1}{pq-1} \right)\Gamma = 0,\\
\\
\left(\dfrac{q+1}{pq-1}\right)\Psi_1(0) - q\Gamma^{q-1}\Phi_1(0) + 2\mu b\left(\dfrac{q+1}{pq-1} \right)\gamma = 0.\\
\end{array}
\end{equation*}
Using the definition of $(\Gamma, \gamma)$ given in \eqref{def:Gamgam}, one can simplify this system and obtain 
\begin{equation}\label{eq:Phi1Psi1z0}
\Phi_1(0) = \frac{2b \Gamma(p \mu + 1)}{pq-1} \quad \text{and} \quad \Psi_1(0) = \frac{2b\gamma (q + \mu)}{pq-1}.
\end{equation}
Let us now expand $(\Phi_1, \Psi_1)$ in power of $z$, namely 
\begin{equation}\label{eq:expanPhi1Psi1}
\begin{array}{l}
\Phi_1(z) = \Phi_1(0) + d_1z + d_2 z^2 + \Oc(z^3),\\
\Psi_1(z) = \Psi_1(0) + e_1z + e_2 z^2 + \Oc(z^3).
\end{array}
\end{equation}
Injecting these forms into \eqref{def:FG} and expanding $F$ and $G$ in powers of $z$, we obtain at the order $z$, 
\begin{equation*}
\begin{array}{l}
\left(\dfrac{1}{2} + \dfrac{\gamma^p}{\Gamma} \right)d_1 - p\gamma^{p-1}e_1 = 0,\\
\\
- q\Gamma^{q-1}d_1 + \left(\dfrac{1}{2} + \dfrac{\Gamma^q}{\gamma} \right)e_1 = 0,\\
\end{array}
\end{equation*}
which yields 
$$0 = \left(\frac{1}{2}\gamma^{p+1} + \frac{1}{2}\Gamma^{q+ 1} + \frac{1}{4}\Gamma \gamma - (pq-1)\Gamma^q\gamma^p\right)e_1 := Ae_1.$$
A straightforward computation gives $A < 0$, hence, 
$$d_1= e_1 = 0.$$
For the terms of order $z^2$ in the expansion of $F$ and $G$, we have 
\begin{equation*}
\begin{array}{l}
\left(\dfrac{1}{\gamma^p} + \dfrac{1}{\Gamma} \right)d_2 - \dfrac{p}{\gamma}e_2 + \dfrac{2b^2p(q + 1)(p-1)(q + \mu)}{(pq - 1)^2} - \dfrac{6b^2p(q+1)}{pq-1} + b = 0,\\
\\
\left(\dfrac{1}{\Gamma^q} + \dfrac{1}{\gamma} \right)e_2 - \dfrac{q}{\Gamma}d_2 + \dfrac{2b^2q(p + 1)(q-1)(p\mu + 1)}{(pq - 1)^2} - \dfrac{6\mu b^2q(p+1)}{pq-1} + b = 0.
\end{array}
\end{equation*}
Multiplying the second equation by $\frac{p(q+1)}{q(p+1)}$, then combining with the first equation, we find that the coefficients of $d_2$ and $e_2$ disappear leading to
\begin{equation*}
b = \frac{(pq-1)(2pq + p + q)}{4pq(p+1)(q+1)(1+\mu)},
\end{equation*}
which is the desired result. Note that our computation fits with the result of the case $\mu = 1$ by combining \eqref{eq:solPhi0Psi0}, \eqref{eq:beh} and \eqref{eq:valueb}. \\

In conclusion, we obtain the following profile for $(\Phi(y,s), \Psi(y,s))$:
$$(\Phi(y,s), \Psi(y,s)) \sim (\varphi(y,s), \psi(y,s)),$$
where
\begin{equation}\label{def:varphiys}
\varphi(y,s) = \Phi_0(\frac{y}{\sqrt{s}}) + \frac{1}{s}\Phi_1(0) = \Gamma\left(1 + \frac{b|y|^2}{s}\right)^{-\frac{p+1}{pq-1}} + \frac{2b\Gamma(p\mu + 1)}{(pq - 1)s},
\end{equation}
\begin{equation}\label{def:psiys}
\psi(y,s) =\Psi_0(\frac{y}{\sqrt{s}}) + \frac{1}{s}\Psi_1(0) = \gamma\left(1 + \frac{b|y|^2}{s}\right)^{-\frac{q+1}{pq-1}} +\frac{2b\gamma(q + \mu)}{(pq - 1)s},
\end{equation}
with $b$ given in \eqref{def:val_b}.

\section{Formulation of the problem.}\label{sec:formu}
 In this section, we give a formulation for the proof of Theorem \ref{theo1}. We will only give the proof in one dimensional case ($N = 1$) for simplicity, but the proof remains the same for higher dimensions $N \geq 2$. We want to prove the existence of suitable initial data $(u_0, v_0)$ so that the corresponding solution $(u,v)$ of system \eqref{PS} blows up in finite time $T$ only at one point $a \in \Rb$ and verifies \eqref{eq:asyTh1}. From translation invariance of equation \eqref{PS}, we may assume that $a = 0$. Through the transformation \eqref{def:simiVars}, we want to find $s_0 > 0$ and $(\Phi(y,s_0), \Psi(y,s_0))$ such that the solution $(\Phi, \Psi)$ of system \eqref{eq:PhiPsi} with initial data $(\Phi(y,s_0), \Psi(y,s_0))$ satisfies
\begin{equation}\label{eq:limPhiPsis3}
\lim_{s\to+\infty} \left\| \Phi(y,s) - \Phi^*\left(\frac{y}{\sqrt{s}}\right)\right\|_{L^\infty(\RN)} = \lim_{s\to+\infty} \left\| \Psi(y,s) - \Psi^*\left(\frac{y}{\sqrt{s}}\right)\right\|_{L^\infty(\RN)} = 0,
\end{equation}
where $\Phi^*$ and $\Psi^*$  are given by \eqref{def:fgpro}.

According to the formal analysis in the previous section, let us introduce $\Lambda(y,s)$ and $\Upsilon(y,s)$ such that 
\begin{equation}\label{def:LamUps}
\Phi(y,s) = \Lambda(y,s) + \varphi(y,s), \quad \Psi(y,s) = \Upsilon(y,s) + \psi(y,s),
\end{equation}
where $\varphi$ and $\psi$ are given in \eqref{def:varphiys} and \eqref{def:psiys}.

With the introduction of $(\Lambda,\Upsilon)$ in \eqref{def:LamUps}, the problem is then reduced to constructing functions $(\Lambda,\Upsilon)$ such that
\begin{equation*}
\lim_{s \to+\infty}\|\Lambda(s)\|_{L^\infty(\RN)} = \lim_{s \to+\infty}\|\Upsilon(s)\|_{L^\infty(\RN)} = 0.
\end{equation*}
and $(\Lambda,\Upsilon)$ satisfies the following system:
\begin{equation}\label{eq:LamUp}
\partial_s \binom{\Lambda}{\Upsilon} = \Big(\Hc + \Mc + V(y,s)\Big)\binom{\Lambda}{\Upsilon} + \binom{F_1(\Upsilon, y,s)}{F_2(\Lambda, y,s)} + \binom{R_1(y,s)}{R_2(y,s)},
\end{equation}
where 
\begin{equation}\label{def:opH}
\Hc = \begin{pmatrix}
\Lc_1 & 0 \\ 0 & \Lc_\mu
\end{pmatrix} \quad \text{where }\;\; \Lc_\eta = \eta\Delta - \frac{1}{2}y\cdot \nabla, \quad \eta = \{1, \mu\},
\end{equation}
\begin{equation}\label{def:matM}
\Mc = \begin{pmatrix}
 -\frac{p+1}{pq-1} &\; p\gamma^{p-1}\\
q\Gamma^{q-1} &\; -\frac{q+1}{pq-1}
\end{pmatrix},
\end{equation}
\begin{equation}\label{def:Vys}
V(y,s) = \begin{pmatrix} 
0 & p\big(\psi^{p-1} - \gamma^{p-1}\big)\\ q\big(\varphi^{q-1} - \Gamma^{q-1}\big) &0
\end{pmatrix} \equiv \begin{pmatrix}
0 & V_1\\ V_2 & 0
\end{pmatrix},
\end{equation}
\begin{equation}\label{def:Bys}
\binom{F_1(\Upsilon, y,s)}{F_2(\Lambda, y,s)} = \binom{|\Upsilon + \psi|^{p-1}(\Upsilon + \psi) - \psi^p - p\psi^{p-1}\Upsilon}{|\Lambda + \varphi|^{q-1}(\Lambda + \varphi) - \varphi^q - q\varphi^{q-1}\Lambda},
\end{equation}
and 
\begin{equation}\label{def:Rys}
\binom{R_1(y,s)}{R_2(y,s)} = \binom{-\partial_s \varphi + \Delta \varphi - \frac{1}{2}y\cdot \nabla \varphi - \left(\frac{p+1}{pq-1}\right)\varphi + \psi^p}{-\partial_s \psi + \mu\Delta \psi - \frac{1}{2}y\cdot \nabla \psi - \left(\frac{q+1}{pq-1}\right)\psi + \varphi^q}.
\end{equation}
Note that the term $\binom{F_1}{F_2}$ is built to be quadratic in the inner region $|y| \leq 2K\sqrt{s}$. Indeed, we have for all $K > 1$ and $s \geq 1$, 
\begin{equation*}
\sup_{|y|\leq 2K\sqrt{s}}\left|F_1(\Upsilon, y,s)\right| \leq C(K)|\Upsilon|^2, \quad \sup_{|y|\leq 2K\sqrt{s}}\left|F_2(\Lambda, y,s)\right| \leq C(K)|\Lambda|^2.
\end{equation*}
Note also that the term $\binom{R_1}{R_2}$ measures the defect preventing $(\varphi, \psi)$ from being an exact solution of \eqref{eq:PhiPsi}. Since $(\varphi, \psi)$ is an approximate solution of \eqref{eq:PhiPsi}, one easily checks that
\begin{equation}\label{eq:boundR1R2}
\|R_1(s)\|_{L^\infty(\RN)}+\|R_2(s)\|_{L^\infty(\RN)}  \leq \frac{C}{s}.
\end{equation}
Therefore, since we would like to make $(\Lambda, \Upsilon)$ go to zero as $s \to +\infty$ in $L^\infty(\RN)\times L^\infty(\RN)$, the dynamics of \eqref{eq:LamUp} are influenced by the asymptotic limit of its linear term, 
$$\Big(\Hc + \Mc + V(y,s)\Big)\binom{\Lambda}{\Upsilon} \quad \text{as}\quad s \to +\infty.$$

From the definition \eqref{def:Vys}, we see that the potential $V(y,s)$ has two fundamental properties that will influence strongly our analysis:\\

\noindent $(i)\;$ We have $(V_1(\cdot, s), V_2(\cdot, s)) \to (0,0)$ in $L^2_{\rho_1}(\RN) \times L^2_{\rho_\mu}(\RN)$ as $s \to +\infty$, where $L^2_{\rho_\eta}(\RN)$ is the weighted $L^2$ space associated with the weight $\rho_\eta$ defined by 
\begin{equation}\label{def:rhotheta}
\rho_\eta(y) = \frac{1}{(4\pi \eta)^{N/2}}e^{-\frac{|y|^2}{4\eta}}.
\end{equation}
In particular, the effect on $V$ inside the \textit{blowup region} or in the \textit{inner region} $|y| \leq K\sqrt{s}$ will be a perturbation of the effect of $\Hc + \Mc$. \\

\noindent $(ii)\;$ Outside the \textit{blowup region} or in the \textit{outer region} $|y|\geq K \sqrt s$, we have the following property: for all $\epsilon > 0$, there exist $K_{\epsilon} > 0$ and $s_\epsilon > 0$ such that 
\begin{equation*}
\sup_{s \geq s_\epsilon, |y| \geq K_\epsilon \sqrt{s}} \left|V_1(y,s) - (-p\gamma^{p-1}) \right| + \left|V_2(y,s) - (-q\Gamma^{q-1}) \right| \leq \epsilon.
\end{equation*}
In other words, outside the \textit{blowup region}, the linear operator $\Hc + \Mc + V$ behaves as 
$$\Hc + \left(\begin{array}{cc} -\frac{p+1}{pq-1} & \pm \epsilon_1\\ \pm \epsilon_2 & - \frac{q+1}{pq-1}
\end{array}\right).$$
Given that the spectrum of $\Hc$ is negative (see \eqref{eq:spectrumH} below) and that the matrix has negative eigenvalues for $\epsilon_1$ and $\epsilon_2$ small, we see that $\Hc + \Mc + V$ behaves as one with fully negative spectrum, which greatly simplifies the analysis in that region. \\

Since the behavior of the potential $V$ inside and outside the \textit{blowup region} is different, we will consider the dynamics for $|y| \geq K\sqrt{s}$ and $|y| \leq 2K\sqrt s$ separately for some $K$ to be fixed large.

Let us consider a non-increasing cut-off function $\chi_0 \in \mathcal{C}^\infty_0([0, +\infty))$, with $\text{supp}(\chi_0) \subset [0,2]$ and $\chi_0 \equiv 1$ on $[0,1]$, and introduce
\begin{equation}\label{def:chi}
\chi(y,s) = \chi_0\left(\frac{|y|}{K\sqrt{s}}\right),
\end{equation}
where $K$ is chosen large enough so that various technical estimates hold. We define
\begin{equation}\label{def:LeUe}
\binom{\Lambda_e}{\Upsilon_e} = (1 - \chi)\binom{\Lambda}{\Upsilon},
\end{equation}
$\binom{\Lambda_e}{\Upsilon_e}$ is the part of $\binom{\Lambda}{\Upsilon}$ for $|y| \geq K\sqrt{s}$. As announced a few lines above and as we will see in  Section \ref{sec:outerpart}, the spectrum of the linear operator of the equation
satisfied by $\binom{\Lambda_e}{\Upsilon_e}$ is negative, which makes the control of $\|\Lambda_e(s)\|_{L^\infty(\Rb)}$ and $\|\Upsilon_e(s)\|_{L^\infty(\Rb)}$ easily.

While the control of the outer part is easy, it is not the case for the part of $\binom{\Lambda}{\Upsilon}$ for $|y| \leq 2K\sqrt s$. In fact, inside the \textit{blowup region} $|y| \leq 2K\sqrt s$, the potential $V$ can be seen as a perturbation of the effect of $\Hc + \Mc$ whose spectrum has two positive eigenvalues, a zero eigenvalue in addition to infinitely negatives ones (see Lemma \ref{lemm:diagonal} below).  Therefore, we have to expand $\binom{\Lambda}{\Upsilon}$ inside the \textit{blowup region} with respect to these eigenvalues in order to control $\|\Lambda(s)\|_{L^\infty(|y| \leq 2K\sqrt s)}$ and $\|\Upsilon(s)\|_{L^\infty(|y| \leq 2K\sqrt s)}$. To do so, we need to find a basis where $\Hc + \Mc$ is diagonal or at least in Jordan blocks' form. Since the operator $\Hc$ is contributed from $\Lc_1$ and $\Lc_\mu$, let us first recall well-known spectral properties of the operator $\Lc_\eta$, where $\eta \in \{1, \mu\}$.\\

\noindent $\bullet$ \textbf{Spectral properties of $\Lc_\eta$:} Given $\eta > 0$, let us consider the Hilbert space $L^2_{\rho_\eta}(\RN, \Rb)$ which is the set of all $f \in L^2_{loc}(\RN, \Rb)$ such that 
$$ \|f\|^2_{\rho_\eta} = \big<f,f\big>_{\rho_\eta} < +\infty,$$
where 
\begin{equation}\label{def:normL2rho}
\big<f,g\big>_{\rho_\eta} =  \int_{\RN} f(y)g(y)\rho_\eta(y)dy,
\end{equation}
and $\rho_\eta$ is defined by \eqref{def:rhotheta}. Note that we can write $\Lc_\eta$ in the divergence form
$$\Lc_\eta v =  \frac{\eta}{\rho_\eta}\;\text{div}\,\Big(\rho_\eta\nabla v\Big),$$
and that $\Lc_\eta$ is self-adjoint with respect to the weight $\rho_\eta$. Indeed, for any $v$ and $w$ in $L^2_{\rho_\eta}(\RN, \Rb)$, it holds that
\begin{equation}\label{eq:Ladjoint}
\int_{\RN}v\Lc_\eta w \rho_\eta dy = \int_{\RN}w \Lc_\eta v \rho_\eta dy.
\end{equation}

Let us introduce for each $\alpha = (\alpha_1, \cdots, \alpha_N)\in \mathbb{N}^N$ the polynomial
\begin{equation*}
\tilde{h}_\alpha(y) = c_\alpha\prod_{i = 1}^N H_{\alpha_i}\left(\frac{y_i}{2\sqrt{\eta}}\right),
\end{equation*}
where $H_n$ is the standard one dimensional Hermite polynomial, i.e
\begin{equation}\label{def:Hermite}
H_n(x) = (-1)^ne^{x^2}\frac{d^n}{dx^n}(e^{-x^2}),
\end{equation}
and $c_\alpha \in \Rb$ is chosen so that the term of highest degree in $\tilde{h}_\alpha$ is $\prod_{i = 1}^Ny_i^{\alpha_i}$. In one-dimensional case, we have
\begin{equation}\label{eq:hntildeN1}
\tilde{h}_n(y) = \eta^\frac{n}{2}\sum_{j = 0}^{\left[\frac{n}{2}\right]}\frac{n!}{(n - 2j)! j!}(-1)^j \left(\frac{y}{\sqrt \eta}\right)^{n - 2j}.
\end{equation}
For example,
$$\tilde h_0 = 1, \quad \tilde h_1 = y, \quad \tilde{h}_2 = y^2 - 2\eta,$$
$$\tilde{h}_3 = y^3 - 6\eta y, \quad \tilde{h}_4 = y^4 - 12\eta y^2 + 12\eta^2.$$

The family of eigenfunctions of $\Lc_\eta$ constitutes an orthogonal basic in $L^2_{\rho_\eta}(\RN, \Rb)$ in the sense that for any different $\alpha$ and $\beta$ in $\mathbb{N}^N$, 
\begin{equation*}
\Lc_\eta \tilde{h}_\alpha = -\frac{|\alpha|}{2}\tilde{h}_\alpha, \quad |\alpha| = \alpha_1+\cdots + \alpha_N,
\end{equation*}
\begin{equation}\label{eq:orthohnhm}
\int_{\RN} \tilde{h}_\alpha(y) \tilde{h}_\beta(y)\rho_\eta(y)dy = 0,
\end{equation}
and that for any $f$ in $L^2_{\rho_\eta}(\RN, \Rb)$, one can express
$$f = \sum_{\alpha \in \mathbb{N}^N}\big<f, \tilde{h}_\alpha\big>_{\rho_\eta}\tilde{h}_\alpha = \sum_{\alpha \in \mathbb{N}^N} f_\alpha \tilde{h}_\alpha.$$

\begin{remark}  We remark that for any polynomial $P_n(y)$ of degree $n$, we have by \eqref{eq:orthohnhm},
$$\int_{\RN}\tilde{h}_\alpha(y) P_n(y)\rho_\eta(y)dy = 0 \quad \text{for all}\;\; |\alpha| > n.$$
\end{remark}

\medskip

\noindent $\bullet$ \textbf{Spectral properties of $\Hc$}: Let us consider the functional space $L^2_{\rho_1}(\RN, \Rb) \times L^2_{\rho_\mu}(\RN, \Rb)$, which is the set of all $\binom{f}{g} \in L^2_{loc}(\RN, \Rb) \times L^2_{loc}(\RN,\Rb)$ such that 
$$\left<\binom{f}{g}, \binom{f}{g}\right> < +\infty,$$
where 
$$\left<\binom{f_1}{g_1}, \binom{f_2}{g_2}\right>:= \big<f_1, f_2 \big>_{\rho_1} + \big<g_1, g_2 \big>_{\rho_\mu}.$$
If we introduce for each $\alpha \in \mathbb{N}^N$,
\begin{equation}\label{def:hkalpha}
h_\alpha(y) = a_\alpha\prod_{i=1}^NH_{\alpha_i}\left(\frac{y_i}{\sqrt 2}\right) \quad \text{and} \quad \hat{h}_\alpha(y) = \hat{a}_\alpha \prod_{i=1}^NH_{\alpha_i}\left(\frac{y_i}{2\sqrt{\mu}}\right),
\end{equation}
where $H_n$ is defined by \eqref{def:Hermite}, and $a_\alpha$ and $\hat a_\alpha$ are constants chosen so that the terms of highest degree in $h_\alpha$ and $\hat{h}_\alpha$ is $\prod_{i = 1}^N y^{\alpha_i}$, then 
\begin{equation}\label{eq:spectrumH}
\Hc \binom{h_\alpha}{0} = -\frac{|\alpha|}{2}\binom{h_\alpha}{0} \quad \text{and} \quad \Hc \binom{0}{\hat h_\alpha} = -\frac{|\alpha|}{2}\binom{0}{\hat h_\alpha}.
\end{equation}
Moreover, for each $\binom{f}{g}$ in $L^2_{\rho_1}(\RN, \Rb) \times L^2_{\rho_\mu}(\RN, \Rb)$, we can write it in the form
\begin{align*}
\binom{f}{g}= \sum_{\alpha \in \mathbb{N}^N}\big<f, h_\alpha\big>_{\rho_1}\binom{h_\alpha}{0} + \big<g,\hat{h}_\alpha \big>_{\rho_\mu}\binom{0}{\hat{h}_\alpha}.
\end{align*}

\medskip

\noindent $\bullet$ \textbf{Spectral properties of $\Hc + \Mc$}: As announced in the beginning of Section \ref{sec:formu}, we switch back to the case $N = 1$ for simplicity. Of course, our proof remains valid in the case $N \geq 2$, though with some complications in the notation. We want to find a basis where $\Hc + \Mc$ is diagonal or at least in Jordan blocks' form. More precisely, we have the following:

\begin{lemma}[Diagonalization of $\Hc + \Mc$ in the one dimensional case] \label{lemm:diagonal}  For all $n \in \mathbb{N}$, there exist polynomials $f_n, g_n, \tilde{f}_n$ and $\tilde{g}_n$ of degree $n$ such that

\begin{equation}\label{eq:HMspec1}
\Big(\Hc+ \Mc\Big)\binom{f_n}{g_n} = \left(1 - \frac{n}{2}\right)\binom{f_n}{g_n}, 
\end{equation}
and 
\begin{equation}\label{eq:HMspectilde}
 \Big(\Hc+ \Mc\Big)\binom{\tilde{f}_n}{\tilde{g}_n} = -\left(\frac{n}{2} + \frac{(p+1)(q+1)}{pq-1}\right)\binom{\tilde{f}_n}{\tilde{g}_n},
\end{equation}
where
\begin{equation}\label{def:fngn}
\binom{f_n}{g_n} =  \sum_{j = 0}^{\left[\frac{n}{2}\right]}d_{n, n- 2j} \binom{h_{n - 2j}}{0} + e_{n, n - 2j}\binom{0}{\hat{h}_{n - 2j}},
\end{equation}
\begin{equation}\label{def:fngntilde}
\binom{\tilde{f}_n}{\tilde{g}_n} = \sum_{j = 0}^{\left[\frac{n}{2}\right]}\tilde d_{n, n - 2j} \binom{h_{n - 2j}}{0} + \tilde e_{n, n - 2j}\binom{0}{\hat{h}_{n - 2j}},
\end{equation}
and the coefficients $d_{n, n-2j}, e_{n, n-2j}$, $\tilde{d}_{n, n-2j}$, $\tilde{e}_{n, n-2j}$ depend on the parameters $p, q$ and $\mu$. In particular, we have

\begin{equation}\label{eq:dnen2}\begin{array}{ll}
d_{n,n} = (p+1)\Gamma,\quad &d_{n, n - 2} = n(n-1)p \Gamma (1 - \mu),\\
\\
e_{n,n} = (q+1)\gamma,\quad &e_{n, n - 2} = n(n-1)q\gamma (\mu -1),
\end{array}
\end{equation}
and 
\begin{equation}\label{eq:dnen2tilde}
\begin{array}{ll}
\tilde{d}_{n,n} = p\Gamma, \quad & \tilde{d}_{n,n-2} = n(n-1)\dfrac{pq\Gamma (p+1)(1-\mu)}{3pq + p + q - 1},\\
\\
\tilde{e}_{n,n} = -q\gamma, \quad & \tilde{e}_{n,n-2} = n(n-1)\dfrac{pq\gamma (q+1)(1-\mu)}{3pq + p + q - 1}.
\end{array}
\end{equation}

\end{lemma}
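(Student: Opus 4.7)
The plan is to exploit the fact that $\Lc_\eta$ maps any polynomial of degree $n$ to a polynomial of degree at most $n$, so $\Hc + \Mc$ preserves each finite-dimensional space of polynomial pairs of fixed degree and the eigenvalue problem becomes purely algebraic. First I would diagonalize the constant matrix $\Mc$: using the defining identity \eqref{def:Gamgam} one computes $\gamma^{p-1}\Gamma^{q-1} = (p+1)(q+1)/(pq-1)^2$, hence $\operatorname{tr}(\Mc) = -(p+q+2)/(pq-1)$ and $\det(\Mc) = -(p+1)(q+1)/(pq-1)$. The eigenvalues of $\Mc$ are therefore $1$ and $-(p+1)(q+1)/(pq-1)$, with eigenvectors $V_1 = \binom{(p+1)\Gamma}{(q+1)\gamma}$ and $V_2 = \binom{p\Gamma}{-q\gamma}$, as one checks directly using \eqref{def:Gamgam}.

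Next I would expand $f_n = \sum_k c_k y^k$ and $g_n = \sum_k d_k y^k$ in the monomial basis (only indices of the same parity as $n$ occur, since $\Lc_\eta$ couples monomials whose degrees differ by two). Using $\Lc_\eta(y^k) = -(k/2)\, y^k + \eta\, k(k-1)\, y^{k-2}$, the eigenvalue equation $(\Hc+\Mc)\binom{f_n}{g_n} = \lambda \binom{f_n}{g_n}$ reduces to the triangular recurrence
\[
\bigl(\Mc - (\lambda + k/2)\, I\bigr)\binom{c_k}{d_k} = -(k+1)(k+2)\binom{c_{k+2}}{\mu\, d_{k+2}}, \qquad k = n, n-2, \dots,
\]
with the convention $c_{n+2} = d_{n+2} = 0$. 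At the top level $k = n$ this forces $\binom{c_n}{d_n}$ to be an eigenvector of $\Mc$ for the eigenvalue $\lambda + n/2$. Choosing $\lambda = 1 - n/2$ selects $V_1$, giving the leading values $d_{n,n} = (p+1)\Gamma$ and $e_{n,n} = (q+1)\gamma$ in \eqref{eq:dnen2}; choosing $\lambda = -n/2 - (p+1)(q+1)/(pq-1)$ selects $V_2$, giving $\tilde d_{n,n} = p\Gamma$ and $\tilde e_{n,n} = -q\gamma$ as in \eqref{eq:dnen2tilde}.

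At the subleading level $k = n-2$ with $\lambda = 1 - n/2$, the shifted matrix is $\Mc$ itself; inverting it explicitly using the $\Gamma,\gamma$ identities yields $\binom{c_{n-2}}{d_{n-2}} = -n(n-1)\binom{(1+\mu p)\Gamma}{(q+\mu)\gamma}$. Converting back to the Hermite basis using that, by \eqref{eq:hntildeN1}, the coefficient of $y^{n-2}$ in $h_n$ is $-n(n-1)$ and in $\hat h_n$ is $-n(n-1)\mu$, one reads off $d_{n,n-2} = c_{n-2} + n(n-1)\, d_{n,n} = n(n-1)\, p\Gamma (1-\mu)$ and similarly $e_{n,n-2} = n(n-1)\, q\gamma (\mu - 1)$, matching \eqref{eq:dnen2}. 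The computation for $\tilde d_{n,n-2}, \tilde e_{n,n-2}$ is identical in structure: the relevant shifted matrix becomes $\Mc + \tfrac{2pq + p + q}{pq - 1}\, I$, whose determinant simplifies to $(3pq + p + q - 1)/(pq-1)$, accounting for the denominator $3pq + p + q - 1$ in \eqref{eq:dnen2tilde}.

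The only nontrivial obstacle is the bookkeeping at the deeper levels $k < n-2$: invertibility of $\Mc - (\lambda + k/2)\, I$ requires the shifted value to avoid the two eigenvalues of $\Mc$, which is immediate at $k = n-2$ but amounts to a nonresonance condition of the form $(n-k)/2 \ne 1 + (p+1)(q+1)/(pq-1)$ at lower levels (automatic for generic parameters, and otherwise handled by passing to a generalized eigenvector). Since the subsequent analysis only uses the two leading coefficients of each eigenvector, I would carry out the recursion in full only at $k = n$ and $k = n-2$, and merely record the existence of the lower-order corrections to close the proof.
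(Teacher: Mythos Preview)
Your approach is essentially identical to the paper's: both set up a polynomial ansatz in the monomial basis, derive the same triangular recurrence $(\Mc - (\lambda + k/2)I)\binom{c_k}{d_k} = -(k+1)(k+2)\binom{c_{k+2}}{\mu d_{k+2}}$, read off the eigenvalues from the top level, compute the subleading coefficients by inverting the shifted matrix, and convert back to the Hermite basis via $h_n(y) = y^n - n(n-1)y^{n-2} + \cdots$ and $\hat h_n(y) = y^n - n(n-1)\mu\, y^{n-2} + \cdots$. You are in fact more careful than the paper about the invertibility of $\Mc - (\lambda + k/2)I$ at deeper levels (the paper simply asserts the determinant is nonzero), but since only the two leading coefficients are ever used downstream, this caveat does not affect the argument.
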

\begin{remark}\label{rema:012} The spectrum of $\Hc + \Mc$ has two positive eigenvalues $\lambda_0 = 1$ and $\lambda_1 = \frac{1}{2}$ corresponding to eigenvectors $\binom{f_0}{g_0}$ and $\binom{f_1}{g_1}$; a zero eigenvalue $\lambda_2 = 0$ corresponding to eigenvector $\binom{f_2}{g_2}$. Note that in the case when $N \geq 2$, we have 
$$\binom{f_0(y)}{g_0(y)} = \binom{(p+1)\Gamma}{(q + 1)\gamma}, \quad \binom{f_1(y)}{g_1(y)} = \binom{f_{1,i}(y)}{g_{1,i}(y)}_{1 \leq i \leq N
},$$
and 
$$\binom{f_2(y)}{g_2(y)} = \binom{f_{2,ij}(y)}{g_{2,ij}(y)}_{1 \leq i,j \leq N},$$
where
$$ \binom{f_{1,i}(y)}{g_{1,i}(y)} = \binom{(p+1)\Gamma y_i}{(q+1)\gamma y_i} \quad \text{for}\; 1\leq i \leq N,$$
and 
$$\binom{f_{2,ij}(y)}{g_{2,ij}(y)} = \binom{f_{2,ji}(y)}{g_{2,ji}(y)} = \binom{(p+1)\Gamma y_i y_j}{(q+1)\gamma y_iy_j} \quad \text{for}\; 1 \leq i \neq j \leq N,$$
and 
\begin{equation}\label{def:f2g2Ndim}
\binom{f_{2,ii}(y)}{g_{2,ii}(y)} = \binom{(p+1)\Gamma y_i^2}{(q+1)\gamma y_i^2} + \binom{2p\Gamma(1 - \mu)}{2q\gamma(\mu - 1)} \quad \text{for}\; 1 \leq i \leq N.
\end{equation}
\end{remark}

\begin{proof} For each $n \in \mathbb{N}$, we want to find $\binom{F_n}{G_n}$ in the form of polynomials of degree $n$ such that 
\begin{equation}\label{eq:conHM}
\Big(\Hc + \Mc\Big)\binom{F_n}{G_n} = \lambda\binom{F_n}{G_n} \quad \text{for some }\; \lambda \in \Rb.
\end{equation}
Let us assume that 
$$\binom{F_n}{G_n} = \sum_{i=0}^n\binom{a_{n,n - i}}{b_{n, n-i}} y^{n-i}, \quad a_{n,n} \neq 0, \; b_{n,n} \neq 0.$$
Plugging this form into \eqref{eq:conHM} and comparing elements of the order $y^{n-i}$ with $i \in \{0, 1, \cdots, n\}$, we have for $i = 0$,
\begin{equation}\label{eq:cdnn}
\Big(\Mc - (\lambda + \frac{n}{2})\, \textbf{Id}\Big)\binom{a_{n,n}}{b_{n,n}} = 0, 
\end{equation}
and for $i = 1$,
\begin{equation}\label{eq:cdn1}
\Big(\Mc - (\lambda + \frac{n- 1}{2})\, \textbf{Id}\Big)\binom{a_{n,n-1}}{b_{n,n-1}} = 0,
\end{equation}
and for $i = 2, 3, \cdots, n$, 
\begin{equation}\label{eq:cdni}
\Big(\Mc - (\lambda + \frac{n - i}{2})\, \textbf{Id}\Big)\binom{a_{n,n - i}}{b_{n,n- i}} + (n - i + 2)(n - i + 1)\binom{a_{n, n - i + 2}}{\mu \,b_{n, n - i +2}} = 0.
\end{equation}
Since $(a_{n,n}, b_{n,n}) \neq (0,0)$, we deduce from \eqref{eq:cdnn} that $\det\Big(\Mc - (\lambda + \frac{n}{2})\, \textbf{Id}\Big) = 0$, which means that $\lambda$ satisfy
$$\left(\lambda + \frac{n}{2}\right)^2 + \left(\lambda + \frac{n}{2}\right) \left(\frac{p + q + 2}{pq - 1}\right) - \frac{(p+1)(q + 1)}{pq-1} = 0.$$
Hence, either
\begin{equation}\label{eq:lambdaEig}
\lambda = \lambda_+ = 1 - \frac{n}{2} \quad \text{or} \quad \lambda = \lambda_-= -\frac{(p+1)(q + 1)}{pq-1} - \frac{n}{2}.
\end{equation}
Substituting these values of $\lambda$ into \eqref{eq:cdnn} yields
$$\binom{a_{n,n}}{b_{n,n}} = \binom{(p+1)\Gamma}{(q+1)\gamma}\;\; \text{if} \; \lambda = \lambda_+,\quad \text{and} \quad \binom{a_{n,n}}{b_{n,n}} = \binom{p\Gamma}{-q\gamma} \;\; \text{if} \; \lambda = \lambda_-.$$
Note that for these values of $\lambda$ given in \eqref{eq:lambdaEig}, we have by a direct computation for $i = 1, 2, \cdots, n$,
$$\text{det}\,\Big(\Mc - (\lambda + \frac{n- i}{2})\, \textbf{Id}\Big) = \frac{i^2}{4} - \frac{i}{2}\left(\frac{p + q +2}{pq - 1} + 2\lambda + n\right) \neq 0,$$
whence, we obtain from equation \eqref{eq:cdn1},
$$\binom{a_{n, n-1}}{b_{n, n-1}} = \binom{0}{0}.$$
and from equation \eqref{eq:cdni} by induction,
$$\binom{a_{n, n-3}}{b_{n, n-3}} = \binom{a_{n, n-5}}{b_{n, n-5}}= \cdots = \binom{0}{0}.$$
The couples $\binom{a_{n, n - 2}}{b_{n,n-2}}$, $\binom{a_{n, n - 4}}{b_{n,n-4}}$, $\cdots$ are respectively determined from $\binom{a_{n, n}}{b_{n,n}}$, $\binom{a_{n, n - 2}}{b_{n,n-2}}$, $\cdots$ through equation \eqref{eq:cdni}.

Since the terms of highest degree of $h_m$ and $\hat{h}_m$ are $y^m$, and $h_m$ and $\hat{h}_m$ are even (or odd respectively) if $m$ is even integer (or odd), we can rewrite the expression of $\binom{F_n}{G_n}$ in terms of $\binom{h_j}{0}$ and $\binom{0}{\hat{h}_j}$ for $j = 0, 1, \cdots, n$ as stated in \eqref{def:fngn} and \eqref{def:fngntilde}. 

In order to precise the values of $\binom{d_{n, n - 2}}{e_{n, n - 2}}$ and $\binom{\tilde d_{n, n - 2}}{\tilde e_{n, n - 2}}$, let us compute $\binom{a_{n, n - 2}}{b_{n,n-2}}$.

- For $\lambda = \lambda_+$, we use \eqref{eq:cdni} with $i = 2$ to get
\begin{equation*}
\binom{a_{n, n-2}}{b_{n, n-2}} = -n(n-1)\Mc^{-1}\binom{(p+1)\Gamma}{\mu(q + 1)\gamma} = -n(n-1)\binom{\Gamma(p\mu + 1) }{\gamma(q +\mu)}.
\end{equation*}
Recalling from the definition  \eqref{def:hkalpha} that  $h_{n}(y) = y^n - n(n-1)y^{n-2} + \cdots,$ and $\hat h_n(y) = y^n - n(n-1)\mu y^{n-2} + \cdots$, we deduce from \eqref{def:fngn} that

$$\binom{d_{n,n-2}}{e_{n,n-2}} = n(n-1)\binom{d_{n,n}}{\mu\, e_{n,n}} + \binom{a_{n, n-2}}{b_{n, n-2}} = n(n-1)\binom{p\Gamma(1 - \mu)}{q\gamma(\mu - 1)},$$
which is \eqref{eq:dnen2}.

- For $\lambda = \lambda_-$, we similarly have
\begin{align*}
\binom{\tilde a_{n, n-2}}{\tilde b_{n, n-2}} &= -n(n - 1)\left[\Mc + \left(\frac{2pq + p + q}{pq-1}\right)\textbf{Id} \right]^{-1}\binom{p\Gamma}{-\mu q \gamma}\nonumber\\
&  = -\frac{n(n-1)}{3pq + p + q - 1}\binom{p\Gamma\big[pq(1 + \mu) + (p + \mu q) + (pq - 1)\big]}{-q\gamma\big[pq(1 + \mu) + (p + \mu q) + \mu(pq-1)\big]}.
\end{align*}
Noticing from \eqref{def:fngntilde} that 
$$\binom{\tilde d_{n,n-2}}{\tilde e_{n,n-2}} = n(n-1)\binom{\tilde{d}_{n,n}}{\mu\, \tilde e_{n,n}} + \binom{\tilde a_{n, n-2}}{\tilde b_{n, n-2}},$$
hence, \eqref{eq:dnen2tilde} follows after a straightforward calculation. This concludes the proof of Lemma \ref{lemm:diagonal}.
\end{proof}

\bigskip

For the sake of controlling $\binom{\Lambda}{\Upsilon}$ in the region $|y| \leq 2K\sqrt s$, we will expand  $\binom{\Lambda}{\Upsilon}$ with respect to the family $\left\{\binom{h_n}{0}, \binom{0}{\hat{h}_n}\right\}_{n \geq 0}$ and then with respect to the family $\left\{\binom{f_n}{g_n}, \binom{\tilde{f}_n}{\tilde{g}_n}\right\}_{n \geq 0}$. We start by writing
\begin{align}\label{eq:expandLAmbUpb}
\binom{\Lambda(y,s)}{\Upsilon(y,s)} &= \sum_{n \leq M} Q_n(s) \binom{h_n(y)}{0} + \hat{Q}_n(s) \binom{0}{\hat{h}_n(y)} + \binom{\Lambda_-(y,s)}{\Upsilon_-(y,s)},
\end{align}
(note that this identity is precisely the definition of $\binom{\Lambda_-}{\Upsilon_-}$) where $M$ is a fixed even integer satisfying 
\begin{equation}\label{eq:Mfixed}
M \geq 4\left[1 + \|\Mc\|_{\infty} + 2\max_{y \in \Rb, s \geq 1, i=1,2}|V_i(y,s)|\big)
 \right],
\end{equation}
with $\|\Mc\|_\infty = \max\left\{q\Gamma^{q-1} + \frac{p+1}{pq-1}, p\gamma^{p-1} + \frac{q+1}{pq-1}\right\}$ (in view of the definition \eqref{def:matM} of $\Mc$, this is indeed a suitable norm for $(2\times 2)$ matrices). As we will show in  Section \ref{sec:negetivepart}, the choice of $M$ is crucial and allows us to successfully use a Gronwall's inequality in the control of the infinite-dimensional part $\binom{\Lambda_-}{\Upsilon_-}$, and

\noindent $\bullet\;$ $Q_n(s)$ and $\hat Q_n(s)$ are the projections of $\binom{\Lambda}{\Upsilon}$ on $\binom{h_n}{0}$ and $\binom{0}{\hat h_n}$ respectively, defined by
\begin{equation}\label{def:Qn}
Q_n(s) = \frac{\left< \binom{\Lambda}{\Upsilon}, \binom{h_n}{0} \right>}{\left< \binom{h_n}{0}, \binom{h_n}{0} \right>} = \frac{\big<\Lambda,h_n \big>_{\rho_1}}{\big< h_n, h_n\big>^2_{\rho_1}}\equiv \Pi_n\binom{\Lambda}{\Upsilon},
\end{equation}
\begin{equation}\label{def:Qnhat}
\hat Q_n(s) = \frac{\left< \binom{\Lambda}{\Upsilon}, \binom{0}{\hat h_n} \right>}{\left< \binom{0}{\hat h_n}, \binom{0}{\hat h_n} \right>} = \frac{\big<\Upsilon,\hat{h}_n \big>_{\rho_\mu}}{\big< \hat{h}_n, \hat{h}_n\big>^2_{\rho_\mu}} \equiv \hat \Pi_n\binom{\Lambda}{\Upsilon},
\end{equation}

\noindent $\bullet\;$ $\binom{\Lambda_-(y,s)}{\Upsilon_-(y,s)} = \Pi_{-,M}\binom{\Lambda}{\Upsilon}$ is called the infinite-dimensional part of $\binom{\Lambda}{\Upsilon}$, where $\Pi_{-,M}$ is the projector on the subspace of $L_{\rho_1} \times L_{\rho_\mu}$ where the spectrum of $\Hc$ is lower than $\frac{1-M}{2}$. Note that for all $n \leq M$,
\begin{equation}\label{eq:partnegVW}
\left< \binom{\Lambda_-}{\Upsilon_-}, \binom{h_n}{0} \right>= \big<\Lambda_-, h_n\big>_{\rho_1} = 0 \;\; \text{and}\;\; \left< \binom{\Lambda_-}{\Upsilon_-}, \binom{0}{\hat h_n} \right> = \big<\Upsilon_-,\hat h_n\big>_{\rho_\mu} = 0.
\end{equation}

\noindent $\bullet\;$ We also introduce $\Pi_{+,M} = \textbf{Id} - \Pi_{-,M}$, and the complementary part 
$$\binom{\Lambda_+}{\Upsilon_+} = \Pi_{+,M}\binom{\Lambda}{\Upsilon} = \binom{\Lambda}{\Upsilon} - \binom{\Lambda_-}{\Upsilon_-}$$
which is called the finite-dimensional part of $\binom{\Lambda}{\Upsilon}$, and which satisfies for all $s$,
\begin{equation}
\left<\binom{\Lambda_+(y,s)}{\Upsilon_+(y,s)}, \binom{\Lambda_-(y,s)}{\Upsilon_-(y,s)}\right> = 0.
\end{equation}
We will expand it with respect to the basis of eigenfunctions of $\Hc + \Mc$ computed in Lemma \ref{lemm:diagonal}, namely the family $\left\{\binom{f_n}{g_n}, \binom{\tilde f_n}{\tilde g_n}\right\}_{n \leq M}$, as follows:
\begin{align}
\binom{\Lambda_+(y,s)}{\Upsilon_+(y,s)} &= \sum_{n \leq M}Q_n(s)\binom{h_n(y)}{0} + \hat Q_n(s)\binom{0}{\hat h_n(y)}\nonumber\\
&= \sum_{n \leq M}\theta_n(s)\binom{f_n(y)}{g_n(y)} + \tilde \theta_n(s)\binom{\tilde f_n(y)}{\tilde g_n(y)},\label{eq:LplusUplusexpand}
\end{align}
where $\theta_n(s)= P_{n,M}\binom{\Lambda}{\Upsilon}$ and $\tilde{\theta}_n(s) = \tilde P_{n,M}\binom{\Lambda}{\Upsilon}$ are projections of $\binom{\Lambda}{\Upsilon}$ on $\binom{f_n}{g_n}$ and $\binom{\tilde f_n}{\tilde g_n}$ respectively. This is possible, since from Lemma \ref{lemm:diagonal}, we can express $\theta_n(s)$ and $\tilde{\theta}_n(s)$ in terms of $Q_n(s)$ and $\hat Q_n(s)$ as follows:

\begin{lemma}[Definition of the projection on the modes $\binom{f_n}{g_n}$ and $\binom{\tilde f_n}{\tilde g_n}$]\label{lemm:DefProjection} We have 
\begin{equation}\label{def:Pn}
\theta_n =  \sum_{j = 0}^{\left[\frac{M - n}{2}\right]} A_{n + 2j, n}\, Q_{n + 2j} + B_{n + 2j, n}\, \hat Q_{n + 2j} \equiv P_{n,M}\binom{\Lambda}{\Upsilon},
\end{equation}
and 
\begin{equation}\label{def:Pntilde}
\tilde{\theta}_n =  \sum_{j = 0}^{\left[\frac{M - n}{2}\right]} \tilde A_{n + 2j, n}\, Q_{n + 2j} + \tilde B_{n + 2j,n}\, \hat Q_{n + 2j} \equiv \tilde P_{n,M}\binom{\Lambda}{\Upsilon}
\end{equation}
where the coefficients $A_{n + 2j, n}$, $B_{n + 2j,n}$, $\tilde A_{n + 2j, n}$ and $\tilde B_{n + 2j,n}$ for $j = 0, 1, 2, \cdots$ depend on $p, q$ and $\mu$. In particular, we have 
\begin{equation}\label{eq:AnBn}
A_{n,n} = \frac{q}{\Gamma(2pq+p+q)}, \quad B_{n,n} = \frac{p}{\gamma(2pq+p+q)},
\end{equation}
and
\begin{equation}\label{eq:Ann2Bnn2}
A_{n+2,n} = -\frac{\tilde e_{n+2,n}}{\Gamma \gamma(2pq+p+q)} \quad \text{and} \quad B_{n+2,n} = \left(\frac {p+1}{q+1}\right) \frac{ \tilde e_{n+2,n}}{\gamma^2(2pq+p+q)}.
\end{equation}
\end{lemma}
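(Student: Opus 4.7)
The plan is to view \eqref{def:Pn}-\eqref{def:Pntilde} as the statement that the change of basis from $\{\binom{h_n}{0}, \binom{0}{\hat h_n}\}_{n\leq M}$ to $\{\binom{f_n}{g_n}, \binom{\tilde f_n}{\tilde g_n}\}_{n\leq M}$ is invertible, and to compute its inverse by direct matching of coefficients. Substituting \eqref{def:fngn}--\eqref{def:fngntilde} into the second line of \eqref{eq:LplusUplusexpand} and equating, for each $m\leq M$, the coefficients of $\binom{h_m}{0}$ and $\binom{0}{\hat h_m}$ with the corresponding terms on the first line, one obtains the linear system
\begin{align*}
Q_m &= \sum_{j\geq 0,\, m+2j\leq M}\bigl(\theta_{m+2j}\,d_{m+2j,m} + \tilde\theta_{m+2j}\,\tilde d_{m+2j,m}\bigr),\\
\hat Q_m &= \sum_{j\geq 0,\, m+2j\leq M}\bigl(\theta_{m+2j}\,e_{m+2j,m} + \tilde\theta_{m+2j}\,\tilde e_{m+2j,m}\bigr).
\end{align*}
This system is upper triangular in the pairs $(\theta_m,\tilde\theta_m)$ with respect to the ordering by decreasing $m$ within each parity class, so I would solve it by backward induction on $m$, starting from $m=M$.

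At every level $m$, the diagonal $2\times 2$ block is the constant matrix
\[\begin{pmatrix} d_{m,m} & \tilde d_{m,m}\\ e_{m,m} & \tilde e_{m,m}\end{pmatrix} = \begin{pmatrix}(p+1)\Gamma & p\Gamma\\ (q+1)\gamma & -q\gamma\end{pmatrix},\]
whose determinant $-\Gamma\gamma(2pq+p+q)$ is nonzero by the standing hypothesis $p,q>1$. This invertibility uniquely determines $(\theta_m,\tilde\theta_m)$ as a linear combination of the $Q_{m+2j},\hat Q_{m+2j}$ for $j\geq 0$, which establishes \eqref{def:Pn}-\eqref{def:Pntilde}. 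Applying Cramer's rule to this block while keeping only the leading right-hand side $(Q_n,\hat Q_n)$ gives
\[\theta_n = \frac{\tilde e_{n,n}\,Q_n - \tilde d_{n,n}\,\hat Q_n}{-\Gamma\gamma(2pq+p+q)} = \frac{q}{\Gamma(2pq+p+q)}\,Q_n + \frac{p}{\gamma(2pq+p+q)}\,\hat Q_n + \cdots,\]
which identifies $A_{n,n}, B_{n,n}$ as in \eqref{eq:AnBn}. The other column of the inverse simultaneously yields $\tilde A_{n,n} = (q+1)/[\Gamma(2pq+p+q)]$ and $\tilde B_{n,n} = -(p+1)/[\gamma(2pq+p+q)]$, both of which will be needed at the next step.

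To derive \eqref{eq:Ann2Bnn2}, I would push the induction one level further: first solve the level-$(n+2)$ block to express $\theta_{n+2}$ and $\tilde\theta_{n+2}$ in terms of $Q_{n+2j}, \hat Q_{n+2j}$ with $j\geq 1$, then substitute these formulas into the right-hand side of the level-$n$ block and isolate the coefficients of $Q_{n+2}$ and $\hat Q_{n+2}$ appearing in $\theta_n$. The main obstacle is purely algebraic rather than conceptual: the off-diagonal entries $d_{n+2,n},\tilde d_{n+2,n},e_{n+2,n},\tilde e_{n+2,n}$ from \eqref{eq:dnen2}-\eqref{eq:dnen2tilde} all carry a factor $(1-\mu)$ but differ in their combinatorial weights, and verifying that the four resulting cross-terms collapse to the clean expression proportional to $\tilde e_{n+2,n}$ in \eqref{eq:Ann2Bnn2} requires careful use of the identities $\gamma^p = \Gamma(p+1)/(pq-1)$ and $\Gamma^q = \gamma(q+1)/(pq-1)$ coming from \eqref{def:Gamgam}.
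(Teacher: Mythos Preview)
Your approach is correct and essentially identical to the paper's: both recognize the block-triangular change of basis, invert the constant diagonal $2\times 2$ block to obtain \eqref{eq:AnBn}, and then compute the first off-diagonal block (what the paper writes as $\mathcal{G}_{n+2,n}=-\mathcal{T}\mathcal{D}_{n+2,n}\mathcal{T}$ with $\mathcal{T}$ the inverse of your diagonal block) to obtain \eqref{eq:Ann2Bnn2}. One small remark: for the final simplification you anticipate needing the relations \eqref{def:Gamgam}, but in fact it is cleaner to observe directly from \eqref{eq:dnen2}--\eqref{eq:dnen2tilde} that $d_{n+2,n}=-\frac{p\Gamma}{q\gamma}e_{n+2,n}$ and $\tilde d_{n+2,n}=\frac{(p+1)\Gamma}{(q+1)\gamma}\tilde e_{n+2,n}$, which makes the collapse to a single multiple of $\tilde e_{n+2,n}$ immediate.
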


\begin{remark} \label{rema:pro} From Lemma \ref{lemm:DefProjection}, we obviously see that when a function is of the form $ \sum_{n = 0}^M \omega_n \binom{f_n}{g_n} + \tilde{\omega}_n \binom{\tilde f_n}{\tilde g_n}$, its projections on $\binom{f_n}{g_n}$ and $\binom{\tilde f_n}{\tilde g_n}$ are respectively $\omega_n$ and $\tilde \omega_n$.
\end{remark}
\begin{remark} The precise values given in \eqref{eq:AnBn} and \eqref{eq:Ann2Bnn2} are crucial in deriving a refined ODE satisfied by the null mode, that is the ODE given in part $(iii)$ of Proposition \ref{prop:dyn} (see Lemma \ref{lemm:Pro3rdtermf2g2} also).
\end{remark}

\begin{proof} We first note that the matrix of $\left\{\binom{f_n}{g_n}, \binom{\tilde{f}_n}{\tilde{g}_n}\right\}_{n \in \mathbb{N}}$ in the basis $\left\{\binom{h_n}{0}, \binom{0}{\hat{h}_n}\right\}_{n \in \mathbb{N}}$ is "lower triangular" in the sense that we can express the matrix in terms of $(2 \times 2)$ blocks (see \eqref{def:fngn} and \eqref{def:fngntilde}) as follows:

\begin{equation}\label{sy:1}
\begin{pmatrix}
X_0 \\ X_1 \\ X_2 \\ X_3 \\ X_4 \\ \vdots \\ X_{M}
\end{pmatrix}
 = \begin{pmatrix}
\Fc & \mathbf {0} & \cdots & \cdots & \cdots & \cdots & \mathbf {0}\\
\mathbf {0} & \Fc & \mathbf {0} & \cdots & \cdots & \cdots & \mathbf {0}\\
\Dc_{2,0} & \mathbf {0} & \Fc & \mathbf {0} & \cdots & \cdots & \mathbf {0}\\
\mathbf {0} & \Dc_{3,1} & \mathbf {0} & \Fc & \mathbf {0} & \cdots & \mathbf {0}\\
\Dc_{4,0} & \mathbf {0} & \Dc_{4,2} & \mathbf {0} & \Fc & \cdots & \mathbf {0}\\
\vdots & \ddots & \ddots & \ddots & \ddots & \ddots & \vdots\\
\cdots & \cdots & \Dc_{M, M - 4} &  \mathbf {0} & \Dc_{M, M - 2} & \mathbf {0} & \Fc
\end{pmatrix} \begin{pmatrix}
Y_0 \\ Y_1\\ Y_2\\ Y_3 \\ Y_4 \\ \vdots \\ Y_{M}
\end{pmatrix},
\end{equation}
where $\mathbf {0}$ is the $(2\times 2)$ zero matrix,
$$X_n = \begin{pmatrix}\binom{f_n}{g_n} \\ \\
\binom{\tilde f_n}{\tilde g_n} \end{pmatrix}, \quad Y_n = \begin{pmatrix}\binom{h_n}{0} \\ \\ \binom{0}{\hat{h}_n}\end{pmatrix},$$
$$\Fc = \begin{pmatrix}
(p+1)\Gamma &\quad (q+1)\gamma\\
p \Gamma &\quad -q \gamma
\end{pmatrix},$$

$$\Dc_{n, n-2j} = \begin{pmatrix}
d_{n, n - 2j}&\quad e_{n, n - 2j}\\
\tilde d_{n, n-2j}&\quad \tilde e_{n, n-2j}
\end{pmatrix}.$$

Thus, we can express $\left\{\binom{h_n}{0}, \binom{0}{\hat{h}_n}\right\}_{n \in \mathbb{N}}$ in terms of $\left\{\binom{f_n}{g_n}, \binom{\tilde{f}_n}{\tilde{g}_n}\right\}_{n \in \mathbb{N}}$ by inverting the matrix associated to system \eqref{sy:1} resulting in the following:

\begin{equation}\label{sy:2}
\begin{pmatrix}
Y_0 \\ Y_1 \\ Y_2 \\ Y_3 \\ Y_4 \\ \vdots \\ Y_{M}
\end{pmatrix}
  = \begin{pmatrix}
\Tc & \mathbf {0} & \cdots & \cdots & \cdots & \cdots & \mathbf {0}\\
\mathbf {0} & \Tc & \mathbf {0} & \cdots & \cdots & \cdots & \mathbf {0}\\
\Gc_{2,0} & \mathbf {0} & \Tc & \mathbf {0} & \cdots & \cdots & \mathbf {0}\\
\mathbf {0} & \Gc_{3,1} & \mathbf {0} & \Tc & \mathbf {0} & \cdots & \mathbf {0}\\
\Gc_{4,0} & \mathbf {0} & \Gc_{4,2} & \mathbf {0} & \Tc & \cdots & \mathbf {0}\\
\vdots & \ddots & \ddots & \ddots & \ddots & \ddots & \vdots\\
\cdots & \cdots & \Gc_{M, M - 4} &  \mathbf {0} & \Gc_{M, M - 2} & \mathbf {0} & \Tc
\end{pmatrix}  \begin{pmatrix}
X_0 \\ X_1\\ X_2\\ X_3 \\ X_4 \\ \vdots \\ X_{M}
\end{pmatrix},
\end{equation}
where 
$$\Tc = \Fc^{-1} = \frac{1}{\Gamma \gamma (2pq + p + q)} \begin{pmatrix}
q \gamma &\quad (q + 1)\gamma \\p\Gamma &\quad - (p+1)\Gamma
\end{pmatrix},$$
for some $(2\times2)$ matrices $\Gc_{i,j}$.

By extracting the $(2\times 2)$ blocks in \eqref{sy:2}, we derive from \eqref{eq:expandLAmbUpb} the expressions \eqref{def:Pn} and \eqref{def:Pntilde}. It remains to compute \eqref{eq:AnBn} and \eqref{eq:Ann2Bnn2} in order to complete the proof of Lemma \ref{lemm:DefProjection}. To this end, we note from \eqref{sy:2} that 
$$\begin{pmatrix}
A_{n,n} & \tilde{A}_{n,n}\\
B_{n,n} & \tilde{B}_{n,n}
\end{pmatrix} = \Tc, \quad \text{and} \quad \begin{pmatrix}
A_{n+2,n} & \tilde{A}_{n+2,n}\\
B_{n+2,n} & \tilde{B}_{n+2,n}
\end{pmatrix} = \Gc_{n+2,n} = - \Tc \Dc_{n+2,n} \Tc.$$

This gives \eqref{eq:AnBn} and the following formulas for $A_{n+2,n}$ and $B_{n+2,2}$:
\begin{align*}
A_{n+2,n} &= -\frac{1}{\Gamma^2 \gamma^2(2pq+p+q)^2}\big[q\gamma^2 (q d_{n+2,n} + (q+1)\tilde d_{n+2,n}) + \\& \qquad \qquad \qquad \qquad \qquad \qquad + p\Gamma \gamma(q e_{n+2,n} + (q+1)\tilde e_{n+2,n}) \big],\\
B_{n+2,n} &= -\frac{1}{\Gamma^2 \gamma^2(2pq+p+q)^2}\big[ q\Gamma \gamma(p d_{n+2,n} - (p+1)\tilde d_{n+2,n}) + \\
&\qquad \qquad \qquad \qquad \qquad \qquad + p\Gamma^2 (p e_{n+2,n} - (p+1)\tilde{e}_{n+2,n}) \big],
\end{align*}
where the coefficients $d_{n+2,n}$, $e_{n+2,n}$, $\tilde{d}_{n+2,n}$ and $\tilde e_{n+2,n}$ are given in \eqref{eq:dnen2} and \eqref{eq:dnen2tilde}.\\
Note that
$$d_{n+2,n} = -\frac{p\Gamma}{q\gamma}e_{n+2,n} \quad \text{and}\quad \tilde{d}_{n+2,n} = \frac{(p+1)\Gamma}{(q+1)\gamma}\tilde{e}_{n+2,n},$$
we then simplify the expressions of $A_{n+2,n}$ and $B_{n+2,n}$ resulting in \eqref{eq:Ann2Bnn2}. This concludes the proof of Lemma \ref{lemm:DefProjection}.
\end{proof}

\bigskip

From \eqref{eq:expandLAmbUpb} and \eqref{eq:LplusUplusexpand}, it holds that
\begin{equation}\label{decomoposeLamUp}
\binom{\Lambda(y,s)}{\Upsilon(y,s)} = \sum_{n \leq M}\theta_n(s)\binom{f_n(y)}{g_n(y)} + \tilde{\theta}_n(s)\binom{\tilde{f}_n(y)}{\tilde{g}_n(y)} + \binom{\Lambda_-(y,s)}{\Upsilon_-(y,s)}.
\end{equation}
Note that the decomposition \eqref{decomoposeLamUp} is unique.

\section{Proof of the existence result assuming some technical lemmas.}\label{sec:existence}
This section is devoted to the proof of Theorem \ref{theo1}. We will first show the existence of a solution $\binom{\Lambda}{\Upsilon}$ of system \eqref{eq:LamUp} satisfying 
\begin{equation}\label{eq:goalconstruct}
\|\Lambda(s)\|_{L^\infty(\Rb)} + \|\Upsilon(s)\|_{L^\infty(\Rb)} \to 0 \quad \text{as} \quad s \to+\infty,
\end{equation}
which concludes part $(ii)$ of Theorem \ref{theo1} (though with no estimate of the error). The proof of parts $(i)$ and $(iii)$ then follows from part $(ii)$. We will give all the arguments of the proof without technical details which are left to the next section. 
Hereafter, we denote by $C$ a generic positive constant depending only on $p,q, \mu$ and $K$ introduced in \eqref{def:chi}. \\

Given $A \geq 1$ and $s_0 \geq e$, we consider initial data for system \eqref{eq:LamUp}, depending on two real parameters $d_0$ and $d_1$ of the form:
\begin{equation}\label{eq:intialdata}
\binom{\Lambda_0(y)}{\Upsilon_0(y)}_{d_0,d_1, s_0, A} = \frac{A}{s_0^2}\left(d_0\binom{f_0(y)}{g_0(y)} + d_1\binom{f_1(y)}{g_1(y)}\right)\chi(2y, s_0),
\end{equation}
where $\binom{f_i}{g_i}, i = 0,1$ are defined by \eqref{def:fngn} and $\chi$ is introduced in \eqref{def:chi}.  The solution of system \eqref{eq:LamUp} with initial data \eqref{eq:intialdata} will be denoted by 
$\binom{\Lambda(y,s)}{\Upsilon(y,s)}_{d_0, d_1, s_0, A}$, or by $\binom{\Lambda(y,s)}{\Upsilon(y,s)}$
when there is no ambiguity. Our aim is to show that if $A$ is fixed large enough, then $s_0$ is fixed large enough depending on $A$, we can also fix the parameters $(d_0,d_1) \in [-2,2]^2$ so that the solution $\binom{\Lambda(y,s)}{\Upsilon(y,s)}_{d_0, d_1, s_0, A}$ will be defined for all $s \geq s_0$ and converges to $\binom{0}{0}$  as $s \to +\infty$ in $L^\infty(\Rb)$, meaning that \eqref{eq:goalconstruct} holds. According to the decomposition \eqref{decomoposeLamUp} and the definition \eqref{def:LeUe}, it is enough to control the solution in a shrinking set defined as follows:

\begin{definition}[Definition of a shrinking set for the components of $\binom{\Lambda}{\Upsilon}$] \label{def:VA} For all $A \geq 1$ and $s \geq e$, we defined $\Vc_A(s)$ as the set of all $\binom{\Lambda}{\Upsilon} \in L^\infty(\Rb) \times L^\infty(\Rb)$ such that 
$$\|\Lambda_e(s)\|_{L^\infty(\Rb)} \leq \frac{A^{M+2}}{\sqrt{s}},\quad  \|\Upsilon_e(s)\|_{L^\infty(\Rb)} \leq \frac{A^{M+2}}{\sqrt{s}},$$
$$ \left\|\frac{\Lambda_-(y,s)}{1 + |y|^{M+1}} \right\|_{L^\infty(\mathbb{R})}\leq \frac{A^{M+1}}{s^{\frac{M+2}{2}}},\quad  \left\|\frac{\Upsilon_-(y,s)}{1 + |y|^{M+1}} \right\|_{L^\infty(\mathbb{R})} \leq \frac{A^{M+1}}{s^{\frac{M+2}{2}}},$$

$$|\theta_j(s)|\leq \frac{A^j}{s^\frac{j+1}{2}},\quad |\tilde{\theta}_j(s)| \leq \frac{A^j}{s^\frac{j+1}{2}} \;\; \text{for}\;\; 3\leq j\leq M,$$

$$ |\tilde \theta_i(s)| \leq \frac{A^{2}}{s^2}\;\; \text{for}\;\; i = 0, 1,2,$$

$$|\theta_2(s)| \leq \frac{A^4 \log s}{s^2},$$

$$|\theta_0(s)| \leq \frac{A}{s^2}, \quad |\theta_1(s)| \leq \frac{A}{s^2},$$
where $\Lambda_e, \Upsilon_e$ are defined by \eqref{def:LeUe},  $\Lambda_-, \Upsilon_-$, $\theta_n$, $\tilde \theta_n$ are defined as in \eqref{decomoposeLamUp}.
\end{definition}

As a mater of fact, one can check that if $\binom{\Lambda}{\Upsilon} \in \Vc_A(s)$ for $s \geq e$, then
\begin{equation}\label{eq:LUinVAest}
\|\Lambda(s)\|_{L^\infty(\Rb)} + \|\Upsilon(s)\|_{L^\infty(\Rb)} \leq \frac{CA^{M+2}}{\sqrt{s}},
\end{equation}
for some positive constant $C$ (see Proposition \ref{prop:properVA} below for the proof). Thus, if a solution $\binom{\Lambda}{\Upsilon}$ stays in $\Vc_A(s)$ for all $s \geq s_0$, then it converges to zero in $L^\infty(\Rb)\times L^\infty(\Rb)$. Our aim is then reduced to proving the following proposition:

\begin{proposition}[Existence of a solution of \eqref{eq:LamUp} trapped in $\Vc_A(s)$] \label{prop:goalVA} There exists $A_1$ such that for all $A \geq A_1$, there exists $s_{0,1}(A)$ such that for all $s_0 \geq s_{0,1}$, there exists $(d_0,d_1)$ such that if $\binom{\Lambda}{\Upsilon}$ is the solution of \eqref{eq:LamUp} with initial data at $s_0$ given by \eqref{eq:intialdata}, then $\binom{\Lambda(s)}{\Upsilon(s)} \in \Vc_A(s)$ for all $s \geq s_0$.
\end{proposition}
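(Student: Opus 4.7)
My proof of this proposition is a topological shooting argument in the style of Merle--Zaag \cite{MZdm97} and Bricmont--Kupiainen \cite{BKnon94}. By Lemma \ref{lemm:diagonal} the spectrum of $\Hc+\Mc$ has exactly two positive eigenvalues $\lambda_0=1$ and $\lambda_1=\tfrac12$, a zero eigenvalue $\lambda_2=0$, and otherwise a strictly negative part. The modes $\theta_0,\theta_1$ are therefore the only expanding directions of the linearized flow around $(\vp,\psi)$, which is precisely why I use a two-parameter family of initial data. A direct computation using Lemma \ref{lemm:DefProjection} will give, up to terms exponentially small in $s_0$ due to the sharper cutoff $\chi(2y,s_0)$,
$$\theta_0(s_0)=\frac{Ad_0}{s_0^2},\qquad \theta_1(s_0)=\frac{Ad_1}{s_0^2},$$
with all other coordinates appearing in Definition \ref{def:VA} strictly inside their $\Vc_A$--bounds. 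Thus for $(d_0,d_1)\in[-1,1]^2$ and $s_0$ sufficiently large, the initial datum lies in $\Vc_A(s_0)$ and touches $\partial\Vc_A(s_0)$ only through $\theta_0,\theta_1$ when $|d_i|=1$.

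\textbf{Reduction to a finite-dimensional problem.} I will set
$$s_*(d_0,d_1)=\sup\Big\{s\geq s_0:\binom{\Lambda(\sigma)}{\Upsilon(\sigma)}\in\Vc_A(\sigma)\text{ for all }\sigma\in[s_0,s]\Big\},$$
and use the dynamical estimates of Section \ref{sec:dyn} (the heart of which is Proposition \ref{prop:dyn}) to show that whenever $s_*<\infty$ only $\theta_0$ or $\theta_1$ can saturate its bound at $s=s_*$. This reduction rests on four facts to be proved separately: (i) the outer part $\binom{\Lambda_e}{\Upsilon_e}$ is controlled by the effectively negative spectrum of $\Hc+\Mc+V$ in the region $|y|\geq K\sqrt s$ discussed after \eqref{def:chi}; (ii) the infinite-dimensional part $\binom{\Lambda_-}{\Upsilon_-}$ is handled by a Gronwall argument that absorbs the perturbation $V$ using the large-$M$ choice \eqref{eq:Mfixed}; (iii) the modes $\theta_j,\tilde\theta_j$ for $3\leq j\leq M$ and $\tilde\theta_0,\tilde\theta_1,\tilde\theta_2$ decay strictly faster than their $\Vc_A$--bounds because the associated eigenvalues are strictly negative; (iv) the null mode $\theta_2$ obeys a refined ODE whose coefficients, pinned down by \eqref{eq:AnBn}--\eqref{eq:Ann2Bnn2} and the specific value of $b$ in \eqref{def:val_b}, produce exactly the logarithmic bound $A^4\log s/s^2$. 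For the two expanding modes I will additionally establish an ODE of the form
$$\theta_i'(s)=\Big(1-\tfrac{i}{2}\Big)\theta_i(s)+\Oc\!\big(s^{-3}\big),\qquad i\in\{0,1\},$$
from which any saturation $|\theta_i(s_*)|=A/s_*^2$ is transverse: $\tfrac{d}{ds}\big(s^2\theta_i(s)/A\big)\big|_{s=s_*}$ has the sign of $\theta_i(s_*)$.

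\textbf{Topological conclusion.} Arguing by contradiction, suppose that no $(d_0,d_1)\in[-1,1]^2$ traps the solution in $\Vc_A$ for all $s\geq s_0$. Then $s_*(d_0,d_1)<\infty$ everywhere on $[-1,1]^2$, and the transversality from (iv), combined with continuous dependence on initial data, makes $s_*$ continuous. The map
$$\Phi:[-1,1]^2\longrightarrow\partial\big([-1,1]^2\big),\qquad \Phi(d_0,d_1)=\frac{s_*^2}{A}\big(\theta_0(s_*),\theta_1(s_*)\big)$$
is therefore well defined and continuous. When $(d_0,d_1)\in\partial([-1,1]^2)$, the initial datum already sits on the $\theta_0$-- or $\theta_1$--face of $\partial\Vc_A(s_0)$, so transversality forces $s_*=s_0$ and $\Phi(d_0,d_1)=(d_0,d_1)$. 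Thus $\Phi$ would be a continuous retraction of the square onto its boundary, contradicting Brouwer's no-retraction theorem. Hence some $(d_0,d_1)\in[-1,1]^2$ yields a trajectory trapped in $\Vc_A(s)$ for all $s\geq s_0$; combined with \eqref{eq:LUinVAest} this is exactly the proposition.

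\textbf{Main obstacle.} The delicate step will be (iv). Since $\theta_2$ has zero linear rate, the logarithmic bound $A^4\log s/s^2$ in Definition \ref{def:VA} can be obtained only by extracting the precise $\Oc(1/s)$ contribution of the potential $V$ and of the quadratic remainder $\binom{F_1}{F_2}$ to $\theta_2'$, and by relying on the exact cancellations afforded by the values in \eqref{eq:AnBn}--\eqref{eq:Ann2Bnn2} and by the specific choice \eqref{def:val_b} of $b$. All other estimates fit into the general Merle--Zaag framework, but this one is where the non-variational, asymmetric ($\mu\neq 1$) nature of \eqref{PS} truly bites.
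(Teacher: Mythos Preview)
Your strategy is exactly the paper's: reduce to the two expanding modes via Propositions \ref{prop:properinti} and \ref{prop:redu} (whose content you outline correctly, including the delicate null-mode cancellation), then close with a topological obstruction. Two small technical slips to fix. First, the error in the ODE for $\theta_0,\theta_1$ is only $\Oc(s^{-2})$ in the paper (Proposition \ref{prop:dyn}$(i)$), not $\Oc(s^{-3})$; this is still enough for transversality once $A$ is large. Second, because of the exponentially small correction you yourself note, the map $(d_0,d_1)\mapsto(\theta_0(s_0),\theta_1(s_0))$ does \emph{not} send $\partial([-1,1]^2)$ exactly onto the $\theta_0,\theta_1$--faces of $\partial\Vc_A(s_0)$, so your claim $\Phi|_{\partial[-1,1]^2}=\mathrm{id}$ is not literally true; the paper handles this by working on the preimage rectangle $\Dc_{s_0}\subset[-2,2]^2$ of $[-A/s_0^2,A/s_0^2]^2$ (Proposition \ref{prop:properinti}$(i)$) and invoking degree one on the boundary rather than the identity.
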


Let us first make sure that initial data \eqref{eq:intialdata} belongs to $\Vc_A(s_0)$.  In particular, we claim the following:

\begin{proposition}[Properties of initial data \eqref{eq:intialdata}] \label{prop:properinti} For each $A \geq 1$, there exists $s_{0,2}(A) \geq e$ such that for all $s_0 \geq s_{0,2}$, we have the following properties:

\noindent $(i)\;$ There exists a rectangle 
$$\Dc_{s_0} \subset [-2,2]^2$$
such that the mapping 
\begin{align*}
\Theta: \Rb^2 &\to \Rb^2\\
(d_0,d_1) &\mapsto (\theta_{0,0}, \theta_{0,1})
\end{align*}
(where $\theta_{0,i} = P_{i,M}\binom{\Lambda_0}{\Upsilon_0}$ for $i = 0, 1$, and $\binom{\Lambda_0}{\Upsilon_0}$ stands for $\binom{\Lambda_0}{\Upsilon_0}_{d_0,d_1,s_0,A}$) is linear, one to one from $\Dc_{s_0}$ onto $\left[-\frac{A}{s_0^2}, \frac{A}{s_0^2}\right]^2$ and maps $\partial \Dc_{s_0}$ into $\partial\left(\left[-\frac{A}{s_0^2}, \frac{A}{s_0^2}\right]^2\right)$. Moreover, it has degree one on the boundary. \\

\noindent $(ii)\;$ For all $(d_0,d_1) \in \Dc_{s_0}$, $\binom{\Lambda_0}{\Upsilon_0} \in \Vc_{A}(s_0)$ with strict inequalities except for $\theta_{0,0}$ and $\theta_{0,1}$, in the sense that

$$\Lambda_{0,e} = \Upsilon_{0,e} = 0,$$

$$ \left\|\frac{\Lambda_{0,-}(y)}{1 + |y|^{M+1}} \right\| < \frac{1}{s_0^{\frac{M+2}{2}}},\quad  \left\|\frac{\Upsilon_{0,-}(y)}{1 + |y|^{M+1}} \right\| < \frac{1}{s_0^{\frac{M+2}{2}}},$$

$$|\theta_{0,j}|< \frac{1}{s_0^\frac{j+1}{2}},\quad |\tilde{\theta}_{0,j}| < \frac{1}{s_0^\frac{j+1}{2}} \;\; \text{for}\;\; 3\leq j\leq M,$$

$$ |\tilde \theta_{0,i}| < \frac{1}{s_0^2}\;\; \text{for}\;\; i = 0, 1,2,$$

$$|\theta_{0,2}| < \frac{\log s_0}{s_0^2},$$

$$|\theta_{0,0}| \leq \frac{A}{s_0^2}, \quad |\theta_{0,1}| \leq \frac{A}{s_0^2}.$$
\end{proposition}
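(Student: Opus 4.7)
The plan is to exploit the very specific structure of the initial data \eqref{eq:intialdata}: it is a linear combination of the two positive eigenvectors $\binom{f_0}{g_0}$ and $\binom{f_1}{g_1}$ of $\Hc+\Mc$, multiplied by the cutoff $\chi(2y,s_0)$. Since $\chi(2y,s_0)$ equals $1$ on $|y|\leq K\sqrt{s_0}/2$ and vanishes for $|y|\geq K\sqrt{s_0}$, while $1-\chi(y,s_0)$ is supported in $|y|\geq K\sqrt{s_0}$, I immediately get $\Lambda_{0,e}\equiv \Upsilon_{0,e}\equiv 0$. Writing $\binom{X}{Y}:=(A/s_0^2)(d_0\binom{f_0}{g_0}+d_1\binom{f_1}{g_1})$ for the pure polynomial piece, the whole discrepancy between $\binom{\Lambda_0}{\Upsilon_0}$ and $\binom{X}{Y}$ is the tail $-(1-\chi(2y,s_0))\binom{X}{Y}$, supported in $|y|\geq K\sqrt{s_0}/2$, a region where $\rho_1(y)+\rho_\mu(y)\leq C e^{-c s_0}$.

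For the projections, Remark \ref{rema:pro} gives $P_{0,M}\binom{X}{Y}=(A/s_0^2)d_0$, $P_{1,M}\binom{X}{Y}=(A/s_0^2)d_1$, while all other $P_{n,M}\binom{X}{Y}$ and $\tilde P_{n,M}\binom{X}{Y}$ vanish identically. The contribution of the cutoff tail to any projection is controlled through Lemma \ref{lemm:DefProjection} by Gaussian integrals of $h_n\rho_1$ and $\hat h_n\rho_\mu$ against the degree-one polynomial $d_0 f_i+d_1 f_j$ times $(1-\chi(2y,s_0))$; these integrals are bounded by $C e^{-c s_0}(|d_0|+|d_1|)$. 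Hence
\[
\theta_{0,0}=\frac{A}{s_0^2}(d_0+\ep_{00}),\qquad \theta_{0,1}=\frac{A}{s_0^2}(d_1+\ep_{01}),
\]
with $|\ep_{ij}|\leq C e^{-c s_0}(|d_0|+|d_1|)$, and every other $\theta_{0,n}$, $\tilde\theta_{0,n}$ is exponentially small in $s_0$. The infinite-dimensional part $\binom{\Lambda_{0,-}}{\Upsilon_{0,-}}$ equals $\binom{\Lambda_0}{\Upsilon_0}$ minus its finite-mode expansion; on $|y|\leq K\sqrt{s_0}/2$ it is exponentially small, while on $|y|\geq K\sqrt{s_0}/2$ it is pointwise bounded by $C(A/s_0^2)(1+|y|)$. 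Dividing by $1+|y|^{M+1}$ and using $|y|^M\geq (K/2)^M s_0^{M/2}$ on the outer region yields
\[
\left\|\frac{\Lambda_{0,-}(y)}{1+|y|^{M+1}}\right\|_{L^\infty(\Rb)}+\left\|\frac{\Upsilon_{0,-}(y)}{1+|y|^{M+1}}\right\|_{L^\infty(\Rb)}\leq \frac{CA}{s_0^{(M+4)/2}},
\]
which is strictly less than $1/s_0^{(M+2)/2}$ once $s_0$ is sufficiently large depending on $A$. Comparing these bounds with Definition \ref{def:VA} establishes every strict inequality in part $(ii)$.

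For part $(i)$, the linearity of $\Theta:(d_0,d_1)\mapsto(\theta_{0,0},\theta_{0,1})$ is immediate from the linear dependence of the initial data on $(d_0,d_1)$ and the linearity of the projections. Step~2 gives $\Theta(d_0,d_1)=(A/s_0^2)(I+E_{s_0})(d_0,d_1)^\top$ where the $2\times 2$ matrix $E_{s_0}$ has entries $O(e^{-c s_0})$. Thus for $s_0$ large, $I+E_{s_0}$ is invertible and I set $\Dc_{s_0}:=\Theta^{-1}([-A/s_0^2,A/s_0^2]^2)=(I+E_{s_0})^{-1}([-1,1]^2)$, which is contained in $[-2,2]^2$ since $(I+E_{s_0})^{-1}$ is arbitrarily close to the identity. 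The map $\Theta$ is then a linear bijection $\Dc_{s_0}\to[-A/s_0^2,A/s_0^2]^2$ carrying boundary to boundary, and being an exponentially small perturbation of the orientation-preserving scaled identity $(A/s_0^2)I$, its restriction to $\partial\Dc_{s_0}$ has topological degree one onto $\partial([-A/s_0^2,A/s_0^2]^2)$.

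The main technical obstacle is Step~1: quantifying the exponentially small cutoff corrections in every projection by combining the explicit formulas of Lemma \ref{lemm:DefProjection}, the polynomial growth of $h_n$, $\hat h_n$, $f_n$, $g_n$, $\tilde f_n$, $\tilde g_n$, and the Gaussian decay of $\rho_1$, $\rho_\mu$ beyond $|y|\geq K\sqrt{s_0}/2$. Once these exponential estimates are in hand, all of $(ii)$ is automatic since each bound in $\Vc_A(s_0)$ decays only polynomially in $1/s_0$, and part $(i)$ reduces to standard linear algebra together with the topological degree of a perturbation of the identity.
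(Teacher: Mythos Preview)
Your proof is correct and follows essentially the same approach as the paper's own proof (given there as Lemma \ref{lemm:properiniti}): split the initial data into the pure polynomial $\binom{X}{Y}$ plus the cutoff tail $(\chi(2y,s_0)-1)\binom{X}{Y}$, observe that the polynomial part has exactly the projections $Ad_i/s_0^2$ on $\binom{f_i}{g_i}$ for $i=0,1$ and zero elsewhere by Remark~\ref{rema:pro}, bound all projections of the tail by $Ce^{-cs_0}$ via Gaussian decay, and control the infinite-dimensional part by the pointwise estimate $|\Lambda_0(y)||1-\chi(2y,s_0)|\leq CA s_0^{-(M+4)/2}(1+|y|^{M+1})$. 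Your treatment of part~$(i)$ through the matrix $I+E_{s_0}$ is the same perturbation-of-the-identity argument the paper implicitly uses when it says Proposition~\ref{prop:properinti} ``clearly follows from Lemma~\ref{lemm:properiniti}''.
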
 

\begin{remark} In some sense, $\binom{\Lambda_0}{\Upsilon_0}_{d_0,d_1, s_0, A}$ is reduced to the sum of its components on $\binom{f_0}{g_0}$ and $\binom{f_1}{g_1}$, the only eigenfunctions corresponding to the positive eigenvalues of the linear operator $\Hc + \Mc$ ($\lambda_0 = 1$ and $\lambda_1 = \frac{1}{2}$; see Lemma \ref{lemm:diagonal}). In $N$ dimensions, one has to take $d_0 \in \Rb$ and $d_1\in \RN$ because of the definition of $\binom{f_0}{g_0}$ and $\binom{f_1}{g_1}$ given in Remark \ref{rema:012}.
\end{remark}

The proof of Proposition \ref{prop:properinti} is postponed to Subsection \ref{sec:intialdata} (see Lemma \ref{lemm:properiniti}). Let us now give the proof of Proposition \ref{prop:goalVA}.
\begin{proof}[Proof of Proposition \ref{prop:goalVA}] Let us consider $A \geq 1$ and $s_0 \geq s_{0,2}$, $(d_0,d_1) \in \Dc_{s_0}$, where $s_{0,2}$ is introduced in Proposition \ref{prop:properinti}. From the local Cauchy problem for system \eqref{PS} in $L^\infty(\Rb) \times L^\infty(\Rb)$, we note that for each initial data $\binom{\Lambda_0}{\Upsilon_0}_{d_0,d_1, s_0,A}$, system \eqref{eq:LamUp} has a unique solution which stays in $\Vc_{A}(s)$ until some maximum time $s_* = s_*(d_0,d_1)$. If $ s_*(d_0,d_1)= +\infty$ for some $(d_0,d_1) \in \Dc_{s_0}$, then the proof is complete. Otherwise, we argue by contradiction and suppose that $s_*(d_0,d_1) < +\infty$ for any $(d_0,d_1) \in \Dc_{s_0}$. By continuity and the definition of $s_*$, we note that the solution at time $s_*$ is on the boundary of $\Vc_{A}(s_*)$.
Thus, at least one of the inequalities in the definition of $\Vc_A(s_*)$ is an equality. In the following proposition, we show that this can happen only for the two components $\theta_0(s_*)$ and $\theta_1(s_*)$. Precisely, we have the following result:

\begin{proposition}[Control of $\binom{\Lambda(s)}{\Upsilon(s)}$ by $(\theta_0(s), \theta_1(s))$ in $\Vc_A(s)$] \label{prop:redu} There exists $A_3 \geq 1$ such that for each $A \geq A_3$, there exists $s_{0,3}(A) \geq e$ such that for all $s_0 \geq s_{0,3}(A)$, the following holds:

If $\binom{\Lambda}{\Upsilon}$ is a solution of  \eqref{eq:LamUp} with initial data at $s = s_0$ given by \eqref{eq:intialdata} with $(d_0,d_1) \in \Dc_{s_0}$, and $\binom{\Lambda(s)}{\Upsilon(s)} \in \Vc_{A}(s)$ for all $s \in [s_0, s_1]$ for some $s_1 \geq s_0$ and $\binom{\Lambda(s_1)}{\Upsilon(s_1)} \in \partial \Vc_A(s_1)$, then:\\

\noindent $(i)\;$ (Reduction to a finite-dimensional problem) We have 
$$\big(\theta_0(s_1), \theta_1(s_1)\big) \in \partial \left( \left[-\frac{A}{s_1^2}, \frac{A}{s_1^2}\right]\right)^2.$$

\noindent $(ii)\;$ (Transverse outgoing crossing) There exists $\delta_0 > 0$ such that 
$$\forall \delta \in (0, \delta_0), \quad \binom{\Lambda(s_1 + \delta)}{\Upsilon(s_1 + \delta)} \not \in \Vc_A(s_1 + \delta). $$
\end{proposition}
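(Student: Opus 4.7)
The proof rests on the dynamical estimates encoded in Proposition \ref{prop:dyn}, which yields, for a solution $\binom{\Lambda}{\Upsilon}$ trapped in $\Vc_A(s)$ on $[s_0, s_1]$, a differential inequality for each coordinate in the decomposition \eqref{decomoposeLamUp} together with $\Lambda_e,\Upsilon_e$. The plan is to integrate these inequalities from $s_0$ to any $s\le s_1$, show that every coordinate except $\theta_0$ and $\theta_1$ satisfies a \emph{strictly} better bound than the one defining $\Vc_A(s)$, and then treat the two remaining unstable modes through their explicit ODEs.

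For the stable components, the strict improvement comes from the following observations. The outer part $\binom{\Lambda_e}{\Upsilon_e}$ is governed by an operator $\Hc+\Mc+V$ whose spectrum on $|y|\ge K\sqrt s$ is genuinely negative, so a semigroup estimate combined with the source bound \eqref{eq:boundR1R2} yields a prefactor strictly smaller than $A^{M+2}/\sqrt s$ once $A$ and $s_0$ are taken large. For the infinite-dimensional piece $\binom{\Lambda_-}{\Upsilon_-}$, the eigenvalues of $\Hc$ satisfy $-|\alpha|/2\le (1-M)/2$, and $M$ was chosen in \eqref{eq:Mfixed} precisely so that $(1-M)/2$ dominates $\|\Mc\|_\infty$ plus the supremum of $V$; a Gronwall argument in the weighted norm $\|\cdot/(1+|y|^{M+1})\|_\infty$ then improves $A^{M+1}/s^{(M+2)/2}$ strictly. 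The projections $\theta_j,\tilde\theta_j$ for $j\ge 3$ and $\tilde\theta_0,\tilde\theta_1,\tilde\theta_2$ all satisfy ODEs whose linear parts have strictly negative eigenvalues ($1-j/2\le -1/2$ in the first family, or $-n/2-(p+1)(q+1)/(pq-1)$ in the second), giving the required strict improvement after a routine integration. The delicate case is the null mode $\theta_2$: here the linear eigenvalue is $0$, so one has to extract a refined ODE from Proposition \ref{prop:dyn} of the form $\theta_2'=-(\alpha/s)\theta_2 + O(1/s^2)$ with $\alpha>0$, using the precise projection coefficients $A_{n+2,n}, B_{n+2,n}$ of Lemma \ref{lemm:DefProjection} together with the particular value of $b$ in \eqref{def:val_b}. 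Integration then yields $|\theta_2(s)| < A^4\log s / s^2$ strictly, provided $A$ is large enough.

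Having ruled out saturation in every coordinate but $\theta_0$ and $\theta_1$, part $(i)$ follows: the boundary of $\Vc_A(s_1)$ can only be reached through those two components, so $(\theta_0(s_1),\theta_1(s_1))\in\partial\bigl([-A/s_1^2, A/s_1^2]^2\bigr)$. For part $(ii)$, Proposition \ref{prop:dyn} yields $\theta_0' = \theta_0 + O(s^{-3})$ and $\theta_1' = \tfrac{1}{2}\theta_1 + O(s^{-3})$, the error terms being controlled by membership of $\binom{\Lambda}{\Upsilon}$ in $\Vc_A(s)$. If, say, $\theta_0(s_1)=\varepsilon A/s_1^2$ with $\varepsilon\in\{-1,+1\}$, then $\varepsilon\theta_0'(s_1) = A/s_1^2 + o(1/s_1^2)$, which strictly exceeds $\frac{d}{ds}(A/s^2)\big|_{s=s_1}=-2A/s_1^3$. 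Hence $\varepsilon\theta_0(s)$ grows strictly faster than the moving boundary $A/s^2$ at $s=s_1$, forcing an outgoing crossing; the same argument applies to $\theta_1$ using the eigenvalue $\tfrac12$. This transversality implies $\binom{\Lambda(s_1+\delta)}{\Upsilon(s_1+\delta)}\notin\Vc_A(s_1+\delta)$ for all small $\delta>0$.

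The main obstacle is the null-mode analysis for $\theta_2$. Since its eigenvalue under $\Hc+\Mc$ vanishes, the improvement must come entirely from the quadratic nonlinearities $\binom{F_1}{F_2}$ projected onto $\binom{f_2}{g_2}$, augmented by the $1/s^2$ contribution from $\binom{R_1}{R_2}$. This is exactly the point at which the value of $b$ obtained in the formal analysis of Section \ref{sec:forap} and the explicit coefficients in \eqref{eq:AnBn}--\eqref{eq:Ann2Bnn2} enter in a non-negotiable way: they produce the algebraic cancellation needed for the refined ODE for $\theta_2$ (flagged in the Remark following Lemma \ref{lemm:DefProjection}). Any other value of $b$ would leave a residual forcing of size $1/s^2$ that is incompatible with the logarithmic bound in Definition \ref{def:VA}.
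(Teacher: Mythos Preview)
Your overall strategy coincides with the paper's: invoke Proposition~\ref{prop:dyn}, show that all coordinates except $\theta_0,\theta_1$ remain strictly inside the bounds of $\Vc_A$, and conclude transversality for the two unstable modes from their ODEs. However, there are two genuine inaccuracies in your account of the null-mode $\theta_2$, and one of them would actually break the argument.

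First, the refined drift $-(\alpha/s)\theta_2$ does \emph{not} come from the quadratic nonlinearity $\binom{F_1}{F_2}$. The projection of $\binom{F_1}{F_2}$ onto $\binom{f_2}{g_2}$ is only $O(s^{-3})$ (Lemma~\ref{lemm:Pro4thterm}). The term you need arises from the potential: Lemma~\ref{lemm:Pro3rdtermf2g2} shows $P_{2,M}\bigl(V\binom{\Lambda}{\Upsilon}\bigr)=-\tfrac{2}{s}\theta_2+O(A^3 s^{-3})$, and this is where the explicit coefficients \eqref{eq:AnBn}--\eqref{eq:Ann2Bnn2} enter. The specific value of $b$ is used separately, in Lemma~\ref{lemm:Pro5thterm}, to force $P_{2,M}\binom{R_1}{R_2}=O(s^{-3})$ rather than $O(s^{-2})$.

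Second, the error you quote, $O(1/s^2)$, is too large: with $\theta_2'=-(2/s)\theta_2+O(s^{-2})$ you would only get $s^2\theta_2=O(s)$, which is useless. The paper obtains the sharper remainder $O(A^3 s^{-3})$, and then closes not by integration but by a pointwise contradiction: if $|\theta_2(s_*)|=A^4\log s_*/s_*^2$ for some $s_*$, comparing $\theta_2'(s_*)$ from the ODE with the derivative of the barrier $A^4\log s/s^2$ yields a contradiction once $A>C$. A minor side remark: the error in the ODEs for $\theta_0,\theta_1$ is $O(s^{-2})$, not $O(s^{-3})$, but this does not affect the transversality conclusion.
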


\begin{remark} In $N$ dimensions, $\theta_0 \in \Rb$ and $\theta_1 \in \RN$. In particular, the finite-dimensional problem is of dimension $N + 1$. This is why in initial data \eqref{eq:intialdata}, one has to take $d_0 \in \Rb$ and $d_1 \in \RN$.

\end{remark}

The proof of Proposition \ref{prop:redu} is a direct consequence of the dynamics of system \eqref{eq:LamUp}. The idea is to project system \eqref{eq:LamUp} on the different components of the decomposition \eqref{decomoposeLamUp} and \eqref{def:LeUe}. However, because of the number of parameters in our problem ($p, q$ and $\mu$) and the coordinates in \eqref{decomoposeLamUp}, the computations become too long. That is why a whole section (Section \ref{sec:reduction}) is devoted to the proof of Proposition \ref{prop:redu}.  Let us now assume Proposition \ref{prop:redu} and continue the proof of Proposition \ref{prop:goalVA}. \\

Let $A \geq A_3$ and $s_0 \geq \max\{s_{0,2}, s_{0,3}\}$. From part $(i)$ of Proposition \ref{prop:redu}, it follows that 
$$\big(\theta_0(s_*), \theta_1(s_*)\big) \in \partial \left( \left[-\frac{A}{s_*^2}, \frac{A}{s_*^2}\right]\right)^2.$$
Hence, we may define the rescaled flow $\Theta$ at $s = s_*$ as follows:
\begin{align*}
\Theta: \Dc_{s_0} &\to \partial \big([-1,1]^2\big)\\
(d_0,d_1) & \mapsto \frac{s_*^2}{A}\big(\theta_0, \theta_1\big)_{d_0,d_1}(s_*).
\end{align*}
From Proposition \ref{prop:redu}$(ii)$, we see that $\Theta$ is continuous. On the other hand, from Proposition \ref{prop:properinti}, we see that when $(d_0,d_1) \in \partial \Dc_{s_0}$, we have the strict inequalities for the other components. Applying the transverse crossing property given in  Proposition \ref{prop:redu}$(ii)$, we see that $\binom{\Lambda(s)}{\Upsilon(s)}$ must leave $\Vc_A(s)$ at $s = s_0$, hence, $s_*(d_0,d_1) = s_0$. From  Proposition \ref{prop:properinti}$(i)$, the restriction of $\Theta$ to the boundary is of degree one. A contradiction then follows from the index theory. This means that there exists $(d_0,d_1) \in \Dc_{s_0}$ such that for all $s \geq s_0$, $\binom{\Lambda(s)}{\Upsilon(s)}_{d_0,d_1,s_0,A} \in \Vc_A(s)$. This concludes the proof of Proposition \ref{prop:goalVA} assuming that Propositions \ref{prop:redu} and \ref{prop:properinti} hold.
\end{proof}

\bigskip

Let us now use the result of Proposition \ref{prop:goalVA} to derive Theorem \ref{theo1}. 
\begin{proof}[Proof of Theorem \ref{theo1}] Recall that we have already chosen $a = 0$ at the beginning of Section \ref{sec:existence}, thanks to translation invariance of equation \eqref{PS}. We have already showed in  Proposition \ref{prop:goalVA} that there exist initial data of the form \eqref{eq:intialdata} such that the corresponding solution  $\binom{\Lambda(s)}{\Upsilon(s)}$ of system \eqref{eq:LamUp} satisfies $\binom{\Lambda(s)}{\Upsilon(s)} \in \Vc_A(s)$ for all $s \geq s_0$. This means that \eqref{eq:LUinVAest} holds for all $s \geq s_0$. From \eqref{def:LamUps} and \eqref{def:simiVars}, we then derive part $(ii)$ of Theorem \ref{theo1}. 

If $x_0 = 0$, then we see from \eqref{eq:asyTh1} that 
$$|u(0,t)| \sim \Gamma(T-t)^{-\frac{p+1}{pq-1}} \quad \text{and} \quad |v(0,t)| \sim \gamma(T-t)^{-\frac{q+1}{pq-1}} \quad \text{as}\; t \to T.$$
Hence, $u$ and $v$ both blow up at time $T$ at $x_0 = 0$.  It remains to show that if $x_0 \neq 0$, then $x_0$ is not a blowup point. The following result from Giga and Kohn \cite{GKcpam89} allows us to conclude:

\begin{proposition}[No blowup under some threshold] \label{prop:Noblowup} For all $C_0 > 0$, there is $\eta_0 > 0$ such that if $(u(x,t), v(x,t))$ solves
\begin{equation*}
\big |\partial_t u - \Delta u\big| \leq C_0\big(1+|v|^p\big), \quad \big| \partial_t v - \mu \Delta v\big| \leq C_0 \big( 1 + |u|^q\big)
\end{equation*}
and satisfies
$$|u(x,t)| \leq \eta_0(T-t)^{-\frac{p+1}{pq-1}}, \quad |v(x,t)| \leq \eta_0(T-t)^{-\frac{q+1}{pq-1}}$$
for all $(x,t) \in B(x_0, r) \times [T-r^2,T)$ for some $x_0 \in \Rb$ and $r > 0$, then $(u,v)$ does not blow up at $(x_0,T)$.
\end{proposition}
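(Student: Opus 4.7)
The plan is to adapt the Giga--Kohn ``threshold'' argument from \cite{GKcpam89} to the coupled system setting with inequalities, using the fact that the smallness parameter $\eta_0$ makes the coupling and source terms subdominant to a linearly contracting semigroup.

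First I would set up similarity variables centered at $(x_0,T)$: with $\alpha=\frac{p+1}{pq-1}$, $\beta=\frac{q+1}{pq-1}$, define
\begin{equation*}
W(y,s) = (T-t)^{\alpha} u(x,t),\quad Z(y,s)=(T-t)^{\beta} v(x,t),\quad y=\frac{x-x_0}{\sqrt{T-t}},\ s=-\log(T-t).
\end{equation*}
Using the key identities $p\beta=\alpha+1$ and $q\alpha=\beta+1$, the parabolic inequalities transform (on $|y|\leq re^{s/2}$, $s\geq s_1:=-2\log r$) into
\begin{equation*}
\bigl|\partial_s W - \Lc_1 W + \alpha W\bigr| \leq C_0\bigl(e^{-(\alpha+1)s}+|Z|^{p}\bigr),\qquad \bigl|\partial_s Z - \Lc_\mu Z + \beta Z\bigr| \leq C_0\bigl(e^{-(\beta+1)s}+|W|^{q}\bigr),
\end{equation*}
while the pointwise hypothesis becomes $|W|\leq\eta_0$, $|Z|\leq\eta_0$ in that same region.

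Next I would run a localized weighted energy argument. Fix a smooth cutoff $\varphi_s(y)=\chi_0\bigl(4|y|/(re^{s/2})\bigr)$, supported where the pointwise bounds hold, equal to $1$ on $|y|\leq re^{s/2}/8$. Multiply the $W$-inequality by $W\varphi_s^2\rho_1$ and the $Z$-inequality by $Z\varphi_s^2\rho_\mu$, integrate in $y$, and use the self-adjointness \eqref{eq:Ladjoint} to derive
\begin{equation*}
\tfrac12\tfrac{d}{ds}\mathcal{E}(s) + \alpha\!\!\int (W\varphi_s)^2\rho_1 + \beta\!\!\int(Z\varphi_s)^2\rho_\mu \leq C_0\!\!\int|W||Z|^{p}\varphi_s^2\rho_1 + C_0\!\!\int|Z||W|^{q}\varphi_s^2\rho_\mu + \mathcal{R}(s),
\end{equation*}
where $\mathcal{E}(s)=\int(W\varphi_s)^2\rho_1+\int(Z\varphi_s)^2\rho_\mu$ and $\mathcal{R}(s)$ collects the source ($e^{-(\alpha+1)s}$, $e^{-(\beta+1)s}$) and commutator errors from $\nabla\varphi_s$. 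The latter are supported near $|y|\sim re^{s/2}$ and, thanks to the Gaussian weights $\rho_1,\rho_\mu$, are exponentially small in $s$. The cross terms are estimated using the smallness bound via $|Z|^{p}\leq\eta_0^{p-1}|Z|$ and $|W|^{q}\leq\eta_0^{q-1}|W|$, followed by Young's inequality. Choosing $\eta_0$ so that $C_0\eta_0^{p-1}+C_0\eta_0^{q-1}<\min(\alpha,\beta)/2$ yields the Gronwall-type inequality $\mathcal{E}'(s)\leq -c\,\mathcal{E}(s)+Ce^{-\kappa s}$, forcing $\mathcal{E}(s)\to 0$ as $s\to\infty$.

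Finally I would upgrade this weighted $L^2$-decay to a pointwise bound using interior parabolic regularity (Schauder/De Giorgi estimates applied to the two inequalities on the shrinking balls $|y|\leq r e^{s/2}/16$, whose radius grows to $+\infty$): this gives $W(y,s),Z(y,s)\to 0$ uniformly on every compact set, hence $u(x,t)=o\bigl((T-t)^{-\alpha}\bigr)$ and $v(x,t)=o\bigl((T-t)^{-\beta}\bigr)$ locally uniformly near $(x_0,T)$. Combined with the original $L^\infty$ bound from the hypothesis and standard local regularity, this implies $u,v$ are bounded in a space-time neighborhood of $(x_0,T)$, so $(x_0,T)$ is not a blowup point. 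The main obstacle is precisely the absence of a Lyapunov/variational structure for \eqref{PS}: unlike the scalar case treated in \cite{GKcpam89}, no monotone energy is available, and the cross-couplings between the two inequalities must be absorbed purely through the smallness $\eta_0^{p-1}+\eta_0^{q-1}\ll 1$; balancing these couplings against the diagonal linear dissipation $\alpha,\beta>0$ while controlling the commutator losses from the moving cutoff $\varphi_s$ is the core technical point.
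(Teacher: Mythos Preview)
The paper does not give a self-contained argument: it simply observes that the Giga--Kohn proof of Theorem~2.1 in \cite{GKcpam89} carries over to the system, because the system has the right scaling invariance and the fundamental solutions of $\partial_t-\Delta$ and $\partial_t-\mu\Delta$ enjoy identical regularizing estimates. That proof is a Duhamel iteration in the \emph{original} variables $(x,t)$ which directly produces an $L^\infty$ bound on a sub-cylinder. Your proposal takes a genuinely different route---a weighted $L^2$ energy in similarity variables---and while the setup and the identification of the main difficulty are correct, there are two real gaps.

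First, the weight mismatch: after $|Z|^p\le\eta_0^{p-1}|Z|$ and Young's inequality, you are left with $\int Z^2\varphi_s^2\rho_1$, not the term $\int Z^2\varphi_s^2\rho_\mu$ appearing in $\mathcal{E}$; since $\rho_1/\rho_\mu$ (or its reciprocal, depending on whether $\mu\lessgtr 1$) is unbounded on $\mathrm{supp}\,\varphi_s$ (which grows like $e^{s/2}$), this cannot be absorbed into $\mathcal{E}$ without further work. Second, and more fundamentally, even granting $\mathcal{E}(s)\to 0$ and the parabolic upgrade to $W,Z\to 0$ in $L^\infty_{\mathrm{loc}}$ in $y$, the conclusion in the original variables is only $u(x,t)=o\bigl((T-t)^{-\alpha}\bigr)$ on the \emph{shrinking} balls $|x-x_0|\le R\sqrt{T-t}$; this is strictly weaker than boundedness on a fixed neighborhood of $(x_0,T)$, and your appeal to ``standard local regularity'' does not close this gap. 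Converting such an $o$-estimate into genuine boundedness is precisely what the Giga--Kohn Duhamel iteration accomplishes, so you would have to invoke or reproduce it at this point anyway.
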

\begin{proof} Although Giga and Kohn give in \cite{GKcpam89} the proof only for the scalar case \eqref{eq:Scalar} (see Theorem 2.1, page 850 in \cite{GKcpam89}), their argument remains valid for system \eqref{PS} because of the following scaling invariant property of system \eqref{PS}: If $(u,v)$ solves \eqref{PS}, then 
$$\forall \lambda > 0, \quad (u_\lambda(x,t), v_{\lambda}(x,t)) := \left(\lambda^\frac{2(p+1)}{pq-1}u(\lambda x, \lambda^2 t), \lambda^\frac{2(q+1)}{pq-1}v(\lambda x, \lambda^2 t) \right),$$
does the same; and because the semigroup and the fundamental solution generated by $\eta \Delta$ with $\eta \in \{1, \mu\}$ have the same regularizing effect independently from $\eta$.
\end{proof}

\medskip

Indeed, we see from \eqref{eq:asyTh1} that 
$$
\sup_{|x| < \frac{|x_0|}{2}} (T-t)^\frac{p+1}{pq-1}|u(x,t)| \leq \Phi^*\left(\frac{|x_0|/2}{\sqrt{(T-t)\log(T-t)}} \right) + \frac{C}{\sqrt{\log(T-t)}} \to 0,
$$
and 
$$
\sup_{|x| < \frac{|x_0|}{2}} (T-t)^\frac{q+1}{pq-1}|v(x,t)| \leq \Psi^*\left(\frac{|x_0|/2}{\sqrt{(T-t)\log(T-t)}} \right) + \frac{C}{\sqrt{\log(T-t)}} \to 0,
$$
as $t \to T$, hence, $x_0$ is not a blowup point of $(u,v)$ from Proposition \ref{prop:Noblowup}. This concludes the proof of part $(i)$ of Theorem \ref{theo1}.\\

We now give the proof of part $(iii)$ of Thereom \ref{theo1}. Using the technique of Merle \cite{Mercpam92}, we derive the existence of a blowup profile $(u^*, v^*) \in \Cc^2(\Rb^*) \times \Cc^2(\Rb^*)$ such that 
$$(u(x,t),v(x,t)) \to (u^*(x), v^*(x)) \quad \text{as} \quad t \to T.$$
The profile $(u^*, v^*)$ is singular at the origin, as we will see shortly, after deriving its equivalent as $x \to 0$. Since our argument is exactly the same as in Zaag \cite{ZAAihn98} used for equation \eqref{eq:GL} with $\beta = 0$ (no new idea is needed), we just give the key arguments and kindly refer the reader to Section 4 in \cite{ZAAihn98} for more details. Consider $K_0 > 0$ to be fixed large enough later. If $x_0 \neq 0$ and $|x_0|$ is small enough, we introduce for all $(\xi, \tau) \in \Rb \times \left[-\frac{t_0(x_0)}{T - t_0(x_0)},1 \right)$,
$$g(x_0, \xi, \tau) = (T-t_0(x_0))^\frac{p+1}{pq-1}u(x,t), \quad h(x_0, \xi, \tau) = (T-t_0(x_0))^\frac{q+1}{pq-1}v(x,t),$$
where 
\begin{equation}\label{eq:defx0t_0}
x = x_0 + \xi\sqrt{T - t_0(x_0)}, \quad t = t_0(x_0) + \tau(T - t_0(x_0)),
\end{equation}\label{eq:deft0x0unique}
 and $t_0(x_0)$ is uniquely determined by 
 \begin{equation}\label{eq:relx0at0}
|x_0| = K_0\sqrt{(T - t_0(x_0))|\log (T - t_0(x_0))|}.
 \end{equation}
From the invariance of system \eqref{PS} under dilation, $(g(x_0, \xi, \tau), h(x_0, \xi, \tau))$ is also a solution of \eqref{PS} on its domain. From  \eqref{eq:deft0x0unique}, \eqref{eq:defx0t_0} and \eqref{eq:asyTh1}, we have
$$\sup_{|\xi| \leq 2 |\log (T-t_0(x_0))|^{1/4}} \left|g(x_0, \xi, 0) - \Phi^*(K_0)\right| \leq \frac{C}{|\log (T - t_0(x_0))|^{1/4}} \to 0,$$
and 
$$\sup_{|\xi| \leq 2 |\log (T-t_0(x_0))|^{1/4}} \left|h(x_0, \xi, 0) - \Psi^*(K_0)\right| \leq \frac{C}{|\log (T - t_0(x_0))|^{1/4}} \to 0,$$
as $x_0 \to 0$. Using the continuity with respect to initial data for system \eqref{PS} associated to a space-localization in the ball $B(0, |\xi| < |\log (T-t_0(x_0))|^{1/4})$, we show as in Section 4 of \cite{ZAAihn98} that 
$$\sup_{|\xi| \leq 2 |\log (T-t_0(x_0))|^{1/4}, 0 \leq \tau < 1} \left|g(x_0, \xi, 0) - \hat{g}_{K_0}(\tau)\right| \leq \epsilon(x_0) \to 0,$$
and 
$$\sup_{|\xi| \leq 2 |\log (T-t_0(x_0))|^{1/4}, 0 \leq \tau < 1} \left|h(x_0, \xi, 0) - \hat{h}_{K_0}(\tau)\right| \leq \epsilon(x_0) \to 0,$$
as $x_0 \to 0$, where 
$$\hat g_{K_0}(\tau) = \Gamma(1 - \tau + bK_0^2)^{-\frac{p+1}{pq-1}}, \quad \hat h_{K_0}(\tau) = \gamma(1 - \tau + bK_0^2)^{-\frac{q+1}{pq-1}},$$
is the solution of system \eqref{PS} with constant initial data $(\Phi^*(K_0), \Psi^*(K_0))$.

Making $\tau \to 1$ and using \eqref{eq:defx0t_0}, we see that 
\begin{align*}
u^*(x_0) &= \lim_{t \to T}u(x,t) = (T-t_0(x_0))^{-\frac{p+1}{pq-1}}\lim_{\tau \to 1}g(x_0, 0, \tau)\\
& \qquad \qquad\qquad \qquad \qquad \qquad\sim (T-t_0(x_0))^{-\frac{p+1}{pq-1}}\hat g_{K_0}(1),\\
v^*(x_0) &= \lim_{t \to T}v(x,t) = (T-t_0(x_0))^{-\frac{q+1}{pq-1}}\lim_{\tau \to 1}h(x_0, 0, \tau)\\
&\qquad \qquad\qquad \qquad \qquad \qquad \sim (T-t_0(x_0))^{-\frac{q+1}{pq-1}}\hat h_{K_0}(1),
\end{align*}
as $x_0 \to 0$. From \eqref{eq:relx0at0}, we have
$$|\log(T - t_0(x_0))| \sim 2\log |x_0|, \quad T -t_0(x_0) \sim \frac{|x_0|^2}{2K_0^2 |\log |x_0||} \quad \text{as}\quad x_0 \to 0,$$
hence,
$$u^*(x_0) \sim \Gamma \left(\frac{b|x_0|^2}{2|\log |x_0||} \right)^{-\frac{p+1}{pq-1}}, \quad v^*(x_0) \sim \gamma \left(\frac{b|x_0|^2}{2|\log |x_0||} \right)^{-\frac{q+1}{pq-1}},$$
as $x_0 \to 0$, which concludes the proof of part $(iii)$ of Theorem \ref{theo1}, assuming that Propositions \ref{prop:redu} and \ref{prop:properinti} hold.

\end{proof}

\section{Proof of the technical results.}\label{sec:dyn}
 In this section, we prove all the technical results used for the proof of the existence of a solution of system \eqref{eq:LamUp} satisfying \eqref{eq:goalconstruct}. In particular, we give the proofs of Propositions \ref{prop:properinti} and \ref{prop:redu}, each  in a separate subsection.

\subsection{Preparation of the initial data.}\label{sec:intialdata}
In this subsection, we give the proof of Proposition \ref{prop:properinti}. Let us start with some properties of the set $\Vc_A(s)$ introduced in Definition \ref{def:VA}:

\begin{proposition}[Properties of elements of $\Vc_A(s)$] \label{prop:properVA} For all $A \geq 1$, there exists $s_1(A) \geq 1$ such that for all $s \geq s_1$, if $\binom{\Lambda(s)}{\Upsilon(s)} \in \Vc_A(s)$, then\\

\noindent $(i)\;$ $\|\Lambda(s)\|_{L^\infty(|y| \leq 2K\sqrt{s})} + \|\Upsilon(s)\|_{L^\infty(|y| \leq 2K\sqrt{s})} \leq C\frac{A^{M+1}}{\sqrt{s}}.$\\

\noindent $(ii) \;$ $\|\Lambda(s)\|_{L^\infty(\Rb)} + \|\Upsilon(s)\|_{L^\infty(\Rb)} \leq C\frac{A^{M+2}}{\sqrt{s}}.$\\

\noindent $(iii)\;$ For all $y \in \Rb$, $|\Lambda(y,s)| + |\Upsilon(y,s)| \leq CA^{M+1}\frac{\log s}{s^2}(1 + |y|^{M+1}).$
\end{proposition}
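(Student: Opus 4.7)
\medskip

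\textbf{Overall strategy.} All three bounds will follow from the unique decomposition \eqref{decomoposeLamUp}
$$\binom{\Lambda}{\Upsilon} = \sum_{n \leq M}\theta_n(s)\binom{f_n}{g_n} + \tilde\theta_n(s)\binom{\tilde f_n}{\tilde g_n} + \binom{\Lambda_-}{\Upsilon_-},$$
together with the identity $\binom{\Lambda}{\Upsilon}=\binom{\Lambda_e}{\Upsilon_e}$ on $\{|y|\geq 2K\sqrt s\}$ (where $\chi\equiv 0$). Since $f_n,g_n,\tilde f_n,\tilde g_n$ are polynomials of degree $n\leq M$ whose coefficients depend only on $p,q,\mu$, one has the pointwise estimate $|f_n(y)|+|g_n(y)|+|\tilde f_n(y)|+|\tilde g_n(y)| \leq C(1+|y|^n)$, which globally we majorise by $C(1+|y|^{M+1})$, and locally by $C(K)\,s^{n/2}$ on the region $|y|\leq 2K\sqrt s$. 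Plug the bounds from Definition \ref{def:VA} into this dichotomy and the rest is bookkeeping.

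\medskip

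\textbf{Part (iii).} I would estimate the decomposition termwise. The piece $\Lambda_-$ already satisfies $|\Lambda_-| \leq A^{M+1}s^{-(M+2)/2}(1+|y|^{M+1})$, which for $M\geq 2$ is absorbed into the target $CA^{M+1}(\log s)\, s^{-2}(1+|y|^{M+1})$. For $3\leq n\leq M$, the bound $|\theta_n|\leq A^n s^{-(n+1)/2}$ combined with $|f_n(y)|\leq C(1+|y|^{M+1})$ yields $|\theta_n f_n|\leq CA^n s^{-2}(1+|y|^{M+1})\leq CA^{M+1}s^{-2}(1+|y|^{M+1})$ for $s\geq 1$. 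The special modes $n=0,1$ contribute at most $CA\, s^{-2}(1+|y|)$, the modes $\tilde\theta_0,\tilde\theta_1,\tilde\theta_2$ contribute at most $CA^2s^{-2}(1+|y|^2)$, and the critical mode $\theta_2$ contributes $CA^4(\log s)s^{-2}(1+|y|^2)$; it is this last term that produces the $\log s$ factor in the final bound. Summing all pieces and using $M+1\geq 4$ (guaranteed by \eqref{eq:Mfixed}) yields (iii) for $\Lambda$; the argument for $\Upsilon$ is identical.

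\medskip

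\textbf{Part (i).} On $|y|\leq 2K\sqrt s$ I would use the sharper polynomial bound $(1+|y|^n)\leq C(K)s^{n/2}$ in place of $(1+|y|^{M+1})$. Then $|\theta_n f_n|\leq CA^n s^{-(n+1)/2}\cdot s^{n/2}=CA^n/\sqrt s$ for $3\leq n\leq M$, and similarly for $\tilde\theta_n\tilde f_n$. The modes $n\in\{0,1\}$ contribute $CA/s^{3/2}$, the modes $\tilde n\in\{0,1,2\}$ contribute $CA^2 s^{-1}$, and the mode $n=2$ contributes $CA^4(\log s)/s$, all dominated by $CA^{M+1}/\sqrt s$ for $s$ large. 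Finally, $|\Lambda_-|\leq A^{M+1}s^{-(M+2)/2}\cdot C(K)s^{(M+1)/2}=CA^{M+1}/\sqrt s$ in this region. Summing yields (i).

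\medskip

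\textbf{Part (ii) and the main obstacle.} Splitting $\Rb$ into $\{|y|\leq 2K\sqrt s\}$ and $\{|y|>2K\sqrt s\}$: on the inner region part (i) gives $CA^{M+1}/\sqrt s$, while on the outer region $\Lambda=\Lambda_e$ so the bound $\|\Lambda_e\|_{L^\infty}\leq A^{M+2}/\sqrt s$ applies directly. Taking the maximum, one obtains (ii) (with the outer, not the inner, piece controlling the coefficient $A^{M+2}$). The proof involves no real difficulty; the only delicate point is to check that every subleading factor ($\log s$, $s^{-1/2}$, $s^{-(j+1)/2}$ with $j\geq 3$, etc.) is indeed absorbed into the stated bounds when $s\geq s_1(A)$ is taken sufficiently large, and that $M\geq 4$ (from \eqref{eq:Mfixed}) so that $A^{M+1}$ dominates the constants $A^k$ with $k\leq 4$ arising from the special low-order modes.
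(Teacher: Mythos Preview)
Your proof is correct and follows essentially the same approach as the paper: use the decomposition \eqref{decomoposeLamUp}, the polynomial degree bounds on $f_n,g_n,\tilde f_n,\tilde g_n$, and the identity $\binom{\Lambda}{\Upsilon}=\binom{\Lambda_e}{\Upsilon_e}$ on $\{|y|\geq 2K\sqrt s\}$. The only cosmetic difference is that the paper streamlines the bookkeeping by using the unified bounds $|\theta_n|+|\tilde\theta_n|\leq CA^{M+1}s^{-(n+1)/2}$ for part~(i) and $|\theta_n|+|\tilde\theta_n|\leq CA^{M+1}(\log s)s^{-2}$ for part~(iii), rather than treating the low modes $n\in\{0,1,2\}$ separately as you do.
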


\begin{proof} Take $A \geq 1$, $s \geq 1$ and assume that $\binom{\Lambda(s)}{\Upsilon(s)} \in \Vc_A(s)$. 

$(i)\;$ If $|y| \leq 2K\sqrt s$, since we have for all $0 \leq n \leq M$, $|\theta_n| + |\tilde{\theta}_n| \leq C\frac{A^{M+1}}{s^{\frac{n+1}{2}}}$ by Definition \ref{def:VA}, we write from \eqref{decomoposeLamUp},
\begin{align*}
|\Lambda(y,s)| &\leq \left(\sum_{n \leq M}|\theta_n||f_n| + |\tilde{\theta}_n||\tilde{f}_n|\right) + |\Lambda_-(y,s)|\\
& \leq C\sum_{n \leq M} \frac{A^{M+1}}{s^{\frac{n+1}{2}}}(1 + |y|)^n + \frac{A^{M+1}}{s^{\frac{M+2}{2}}}(1 + |y|)^{M+1} \leq \frac{C(K)A^{M+1}}{\sqrt s}, 
\end{align*}
(remember from Lemma \ref{lemm:diagonal} that $f_n, g_n, \tilde{f}_n$ and $\tilde g_n$ are polynomials of degree $n$).
The same estimate holds for $|\Upsilon(y,s)|$, which concludes the proof of $(i)$.

$(ii)$ From the definition \eqref{def:LeUe}, we have for $|y| \geq 2K\sqrt s$, $\left|\binom{\Lambda(y,s)}{\Upsilon(y,s)}\right| = \left|\binom{\Lambda_e(y,s)}{\Upsilon_e(y,s)}\right| \leq \frac{CA^{M+2}}{\sqrt s}$. Together with $(i)$, this yields the conclusion.

$(iii)$ We just use \eqref{decomoposeLamUp} and the fact that for all $0 \leq n \leq M$, $|\theta_n| + |\tilde{\theta}_n| \leq C\frac{A^{M+1} \log s}{s^2}$ by Definition \ref{def:VA} (use again the information on the polynomials' degree from Lemma \ref{lemm:diagonal}).  This finishes the proof of Proposition \ref{prop:properVA}.
\end{proof}

Clearly, Proposition \ref{prop:properinti} directly follows from the following result:

\begin{lemma}\label{lemm:properiniti} For all $A \geq 1$, there exists $s_2(A) \geq 1$ such that for all $s_0 \geq s_2$, if initial data for equation \eqref{eq:LamUp} are given by \eqref{eq:intialdata} (write $\binom{\Lambda_0}{\Upsilon_0}:= \binom{\Lambda_0}{\Upsilon_0}_{d_0,d_1,s_0,A}$ for simplicity), then 

$$\Lambda_{0,e} = \Upsilon_{0,e} = 0, \quad \left\|\frac{\Lambda_{0,-}(y)}{1 + |y|^{M+1}}\right\|_{L^\infty(\Rb)} + \left\|\frac{\Upsilon_{0,-}(y)}{1 + |y|^{M+1}}\right\|_{L^\infty(\Rb)} \leq \frac{CA(|d_0| + |d_1|)}{s_0^{\frac{M + 4}{2}}},$$
and all $|\theta_{0,n}|, |\tilde \theta_{0,n}|$ are less than $CA(|d_0| + |d_1|)e^{-s_0}$, except:
$$|\theta_{0,i} - \frac{A d_i}{s^2_0}| \leq CA|d_i|e^{-s_0}, \quad i = 0,1.$$
\end{lemma}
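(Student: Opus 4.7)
The approach is to view the initial data \eqref{eq:intialdata} as a known polynomial of degree $\leq 1$,
$$\binom{\mathbf{P}(y)}{\mathbf{Q}(y)} := \frac{A}{s_0^2}\Bigl(d_0\binom{f_0(y)}{g_0(y)}+d_1\binom{f_1(y)}{g_1(y)}\Bigr),$$
minus its truncation tail $\binom{\mathbf{P}}{\mathbf{Q}}(1-\chi(2y,s_0))$, which is supported on $|y|\geq K\sqrt{s_0}/2$; every estimate is then derived from (a) Remark \ref{rema:pro} applied to $\binom{\mathbf{P}}{\mathbf{Q}}$, together with (b) Gaussian tail bounds for the truncation.

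The identity $\Lambda_{0,e}=\Upsilon_{0,e}=0$ is immediate: since $\chi(2y,s_0)$ vanishes on $|y|\geq K\sqrt{s_0}$ where $\chi(y,s_0)\equiv 1$, the product $(1-\chi(y,s_0))\chi(2y,s_0)$ is identically zero, which by \eqref{def:LeUe} yields the claim.

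For the modes, $\binom{\mathbf{P}}{\mathbf{Q}}$ is already written in the eigenbasis $\{\binom{f_n}{g_n},\binom{\tilde f_n}{\tilde g_n}\}$ using only two entries, so Remark \ref{rema:pro} yields projections exactly $Ad_0/s_0^2$ on $\binom{f_0}{g_0}$, $Ad_1/s_0^2$ on $\binom{f_1}{g_1}$, and zero elsewhere; the infinite-dim. part vanishes because $\deg\mathbf{P},\deg\mathbf{Q}\leq 1\leq M$. The truncation tail satisfies $|\binom{\mathbf{P}}{\mathbf{Q}}(1-\chi(2y,s_0))|\leq CA(|d_0|+|d_1|)(1+|y|)/s_0^2$ and is supported in $|y|\geq K\sqrt{s_0}/2$. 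Since the projection formulas \eqref{def:Qn}, \eqref{def:Qnhat}, \eqref{def:Pn}, \eqref{def:Pntilde} pair this tail with polynomials of degree $\leq M$ against $\rho_\eta$, all its contributions are bounded by
$$\frac{CA(|d_0|+|d_1|)}{s_0^2}\int_{|y|\geq K\sqrt{s_0}/2}(1+|y|^{M+1})\rho_\eta(y)\,dy \leq CA(|d_0|+|d_1|)\,e^{-s_0},$$
once $K$ is large enough (as already fixed in \eqref{def:chi}) and $s_0\geq s_2(A)$. Combining both parts yields the claimed bounds on $\theta_{0,n}$ and $\tilde\theta_{0,n}$ for every $n$, including the refined estimate $|\theta_{0,i}-Ad_i/s_0^2|\leq CA|d_i|e^{-s_0}$ for $i=0,1$.

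For $\Lambda_{0,-}$ (the treatment of $\Upsilon_{0,-}$ is identical), write
$$\Lambda_{0,-}(y) = \Lambda_0(y) - \sum_{n\leq M}\bigl(\theta_{0,n}f_n(y)+\tilde\theta_{0,n}\tilde f_n(y)\bigr)$$
and split $\Rb$ at $|y|=K\sqrt{s_0}/2$. On the inner region, $\chi(2y,s_0)=1$, so $\Lambda_0=(A/s_0^2)(d_0f_0+d_1f_1)$ and the right-hand side reduces to a combination of $f_n,\tilde f_n$ whose coefficients are the discrepancies $\theta_{0,i}-Ad_i/s_0^2$ ($i=0,1$) and the remaining exponentially small $\theta_{0,n},\tilde\theta_{0,n}$; since $\deg f_n,\deg\tilde f_n\leq M$, dividing by $1+|y|^{M+1}$ gives an $O(Ae^{-s_0})$ bound there. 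On the outer region, the pointwise bound $|\Lambda_0(y)|\leq CA(|d_0|+|d_1|)(1+|y|)/s_0^2$ combined with $(1+|y|)/(1+|y|^{M+1})\leq C(K\sqrt{s_0}/2)^{-M}$ produces $CA(|d_0|+|d_1|)/s_0^{(M+4)/2}$; the main polynomial contribution $\theta_{0,0}f_0+\theta_{0,1}f_1$ obeys the same estimate, and the higher-order terms are exponentially small. The announced bound follows. The only delicate point is the exact match between $\binom{\mathbf{P}}{\mathbf{Q}}$ and the two unstable eigenmodes of $\Hc+\Mc$, which is precisely why the data has been chosen in the form \eqref{eq:intialdata}; the rest is elementary bookkeeping.
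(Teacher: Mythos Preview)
Your proof is correct and follows essentially the same approach as the paper's: split the initial data as the pure polynomial $\binom{\mathbf{P}}{\mathbf{Q}}$ plus the truncation tail supported in $|y|\geq K\sqrt{s_0}/2$, apply Remark~\ref{rema:pro} to the polynomial part, and use Gaussian tail bounds for the truncation. The only cosmetic difference is in the handling of $\Lambda_{0,-}$: the paper invokes the continuity of $\Pi_{-,M}$ in the weighted norm (part $(iv)$ of Lemma~\ref{lemm:prokernalL}) applied to the truncation tail, whereas you estimate $\Lambda_{0,-}=\Lambda_0-\sum_{n\leq M}(\theta_{0,n}f_n+\tilde\theta_{0,n}\tilde f_n)$ directly by an inner/outer pointwise split; both arguments are short and equivalent.

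One minor remark: the refined bound $|\theta_{0,i}-Ad_i/s_0^2|\leq CA|d_i|e^{-s_0}$ (with $|d_i|$ rather than $|d_0|+|d_1|$) requires a parity observation you did not make explicit --- $P_{0,M}$ involves only even-index $\Pi_m,\hat\Pi_m$ and thus kills the odd function $d_1 f_1(1-\chi(2y,s_0))$, and symmetrically for $P_{1,M}$. The paper's proof is equally silent on this point, and in any case only the weaker bound with $|d_0|+|d_1|$ is needed for Proposition~\ref{prop:properinti}.
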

\begin{proof} The proof mainly relies on the projections of $\binom{\Lambda_0}{\Upsilon_0}$ on $\binom{f_n}{g_n}$ and $\binom{\tilde f_n}{\tilde{g}_n}$ defined in Lemma \ref{lemm:DefProjection}. Let us start by estimating the outer part. Note from the definition of $\chi$ given in \eqref{def:chi} that we have $\chi(2y,s_0)(1 - \chi(y,s_0)) = 0$. Thus, from \eqref{def:LeUe}, $\Lambda_{0,e} = \Upsilon_{0,e} = 0$. For the other components, let us rewrite \eqref{eq:intialdata} as follows:

$$\binom{ \Lambda_0(y)}{\Upsilon_0(y)} = \binom{\hat \Lambda_0(y)}{\hat{\Upsilon}_0(y)} + \binom{\hat \Lambda_0(y)}{\hat{\Upsilon}_0(y)} (\chi(2y,s_0) - 1),$$
where
$$\binom{\hat \Lambda_0(y)}{\hat{\Upsilon}_0(y)} = \frac{A}{s_0^2}\left(d_0\binom{f_0(y)}{g_0(y)} + d_1\binom{f_1(y)}{g_1(y)}\right);$$
the result will then follow by linearity.

From Remark \ref{rema:pro}, we see that all $P_{n,M}\binom{\hat \Lambda_0}{\hat{\Upsilon}_0}$ and $\tilde P_{n,M}\binom{\hat \Lambda_0}{\hat{\Upsilon}_0}$ are zero, except
$$P_{i,M}\binom{\hat \Lambda_0}{\hat{\Upsilon}_0} = \frac{Ad_i}{s_0^2}, \quad i = 0,1.$$
Using \eqref{decomoposeLamUp}, we see that
$$\binom{\hat \Lambda_{0,-}(y)}{\hat{\Upsilon}_{0,-}(y)} = \Pi_{-,M}\binom{\hat \Lambda_0}{\hat{\Upsilon}_0} \equiv 0.$$
It remains to handle $\binom{\hat \Lambda_0(y)}{\hat{\Upsilon}_0(y)} (\chi(2y,s_0) - 1)$. Since $(\chi(2y,s_0) - 1) = 0$ for $|y| \leq \frac{K}{2}\sqrt {s_0}$, we see that
$$\rho_\eta(\chi(2y,s_0) - 1) \leq Ce^{-s_0}\sqrt{\rho_\eta(y)}, \quad \eta = \{1, \mu\},$$
if $K \geq \sqrt{8\eta}$. Therefore, we derive from Lemma \ref{lemm:DefProjection} and symmetry that
\begin{equation}\label{est:Pnms0}
\left|P_{n,M}\left[\binom{\hat \Lambda_0}{\hat{\Upsilon}_0}(\chi(2y,s_0) - 1)\right] \right|  \leq C(n)A(|d_0| + |d_1|)e^{-s_0}, \;\; \text{for all $n \leq M$}.
\end{equation}
Similarly, \eqref{est:Pnms0} holds with $P_{n,M}$ replaced by $\tilde{P}_{n,M}$.

Furthermore, we have 
\begin{align*}
\left(|\Lambda_0(y)| + |\Upsilon_0(y)| \right) |\chi(2y,s_0) - 1| &\leq \frac{A(|d_0| + |d_1|)}{s_0^2}(1 + |y|) \frac{1 + |y|^M}{(\frac K2 \sqrt{s_0})^{M}} \\
&\leq \frac{CA(|d_0| + |d_1|)}{s_0^{\frac{M + 4}{2}}}(1 + |y|^{M+1}).
\end{align*}
Hence, by a straightforward estimate, we have

$$\left\|\frac{\Lambda_{-,0}(y) (\chi(2y,s_0) - 1)}{1 + |y|^{M+1}}\right\|_{L^\infty(\Rb)} \leq \frac{CA(|d_0| + |d_1|)}{s_0^{\frac{M + 4}{2}}},$$
and 
$$\left\|\frac{\Upsilon_{-,0}(y) (\chi(2y,s_0) - 1)}{1 + |y|^{M+1}}\right\|_{L^\infty(\Rb)} \leq \frac{CA(|d_0| + |d_1|)}{s_0^{\frac{M + 4}{2}}}.$$
This concludes the proof of Lemma \ref{lemm:properiniti}. Since Proposition \ref{prop:properinti} clearly follows from Lemma \ref{lemm:properiniti}, this concludes the proof of  Proposition \ref{prop:properinti} as well.
\end{proof}

\subsection{Reduction to a finite-dimensional problem.} \label{sec:reduction}
In this subsection, we give the proof of Proposition \ref{prop:redu}, which is the crucial part in our analysis. The idea of the proof is to project system \eqref{eq:LamUp} on the different components defined by \eqref{def:LeUe} and the decomposition \eqref{decomoposeLamUp}. More precisely, we claim that Proposition \ref{prop:redu} is a direct consequence of the following:

\begin{proposition}[Dynamics of system \eqref{eq:LamUp}]  \label{prop:dyn} There exists $A_3 \geq 1$ such that for all $A \geq A_3$, there exists $s_3(A) \geq 1$ such that the following holds for all $s_0 \geq s_3(A)$:\\
Assume that for all $s \in [\tau, \tau_1]$ for some $\tau_1 \geq \tau \geq s_0$, $\binom{\Lambda(s)}{\Upsilon(s)} \in \Vc_{A}(s)$, then the following holds for all $s \in [ \tau, \tau_1]$:\\

\noindent $(i)\;$ (ODEs satisfied by the positive modes) For $n = 0, 1$, we have
$$\left|\theta_n'(s) - \left(1 - \frac{n}{2}\right)\theta_n(s)\right| \leq \frac{C}{s^2}.$$

\noindent $(ii)\;$ (ODE satisfied by the null mode) 
$$\left|\theta_2'(s) - \frac 2s\theta_2(s)\right| \leq \frac{CA^3}{s^3}.$$

\noindent $(iii)\;$ (Control of the finite-dimensional part)
\begin{align*}
|\theta_j(s)| &\leq e^{-\left(\frac{j}{2} - 1 \right)(s -\tau)}|\theta_j(\tau)| + \frac{CA^{j-1}}{s^\frac{j+1}{2}},\quad 3\leq j\leq M,\\
|\tilde \theta_j(s)| &\leq e^{-\left(\frac{j}{2} + \frac{(p+1)(q+1)}{pq-1} \right)(s -\tau)}|\tilde \theta_j(\tau)| + \frac{CA^{j-1}}{s^\frac{j+1}{2}}, \quad 3\leq j \leq M,\\
|\tilde \theta_j(s)| &\leq e^{-\left(\frac{j}{2} + \frac{(p+1)(q+1)}{pq-1} \right)(s -\tau)}|\tilde \theta_j(\tau)| + \frac{C}{s^2}, \quad j = 0,1,2.
\end{align*}

\noindent $(iv)\;$ (Control of the infinite-dimensional part) 
\begin{align*}
&\left\|\frac{\Lambda_{-}(y,s)}{1 + |y|^{M+1}} \right\|_{L^\infty(\Rb)} + \left\|\frac{\Upsilon_{-}(y,s)}{1 + |y|^{M+1}} \right\|_{L^\infty(\Rb)}\\
& \leq Ce^{-\frac{(M+1)(s-\tau)}{4}} \left(\left\|\frac{\Lambda_{-}(y,\tau)}{1 + |y|^{M+1}} \right\|_{L^\infty(\Rb)} + \left\|\frac{\Upsilon_{-}(y,\tau)}{1 + |y|^{M+1}} \right\|_{L^\infty(\Rb)}
 \right) + \frac{CA^M}{s^\frac{M+2}{2}}.
\end{align*}

\noindent $(v)\;$ (Control of the outer part)
\begin{align*}
&\|\Lambda_e(s)\|_{L^\infty(\Rb)} + \|\Upsilon_e(s)\|_{L^\infty(\Rb)}\\
& \leq Ce^{-\frac{(r+1)(s - \tau)}{2(pq-1)}}\left(\|\Lambda_e(\tau)\|_{L^\infty(\Rb)} + \|\Upsilon_e(\tau)\|_{L^\infty(\Rb)}\right) +  \frac{CA^{M+1}}{\sqrt{s}}(1 + s - \tau),
\end{align*}
where $r = \min\{p,q\}$.
\end{proposition}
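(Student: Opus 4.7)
My plan is to derive each of the five estimates by projecting the evolution equation \eqref{eq:LamUp} onto the corresponding component of the decomposition \eqref{decomoposeLamUp}--\eqref{def:LeUe}, then bounding the projected right-hand sides using the smallness of $V$, the quadratic character of $\binom{F_1}{F_2}$ in the inner region, the source bound \eqref{eq:boundR1R2}, and the a priori information $\binom{\Lambda(s)}{\Upsilon(s)}\in \Vc_A(s)$ together with Proposition \ref{prop:properVA}. For the finite-dimensional parts (i)--(iii), I would apply the functionals $P_{n,M}$ and $\tilde P_{n,M}$ of Lemma \ref{lemm:DefProjection} to \eqref{eq:LamUp}; Lemma \ref{lemm:diagonal} diagonalises the main operator, so the projected equations are of the form $\theta_n'(s)=(1-\tfrac{n}{2})\theta_n(s)+\mathrm{perturbations}$ and $\tilde\theta_n'(s)=-(\tfrac{n}{2}+\tfrac{(p+1)(q+1)}{pq-1})\tilde\theta_n(s)+\mathrm{perturbations}$, and the stated bounds follow by integration on $[\tau,s]$. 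For the potential and nonlinear contributions, I would split the integrals into the inner region $|y|\le 2K\sqrt{s}$, where $|V|=O(1/s)$ by a Taylor expansion of \eqref{def:Vys} around $(\Gamma,\gamma)$ and $|F_i|\le C|\binom{\Lambda}{\Upsilon}|^2$ by \eqref{def:Bys}, and the outer region, where the weight $\rho$ kills everything up to exponentially small errors.

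For the infinite-dimensional estimate (iv), I would write a Duhamel formula after applying $\Pi_{-,M}$ to \eqref{eq:LamUp}: the free semigroup $e^{(s-\tau)\Hc}$ restricted to the range of $\Pi_{-,M}$ contracts at rate $(M+1)/2$ in $L^2_\rho$, and a standard kernel estimate converts this to the weighted $L^\infty$ decay rate $(M+1)/4$ used in the statement. The choice of $M$ in \eqref{eq:Mfixed} ensures that both $\Mc$ and $V$ may be absorbed by a Gronwall argument without destroying the exponential contraction, and the remaining inhomogeneities produce the $CA^M/s^{(M+2)/2}$ source. For the outer estimate (v), multiplying \eqref{eq:LamUp} by $1-\chi$ replaces $\Mc+V$ by an asymptotically diagonal matrix with negative entries $-\tfrac{p+1}{pq-1}$ and $-\tfrac{q+1}{pq-1}$, since $V_1\to -p\gamma^{p-1}$ and $V_2\to -q\Gamma^{q-1}$ in the outer region; the commutators between $\chi$ and $\Hc$ are localised in the transition layer $|y|\sim K\sqrt{s}$ and contribute only the forcing term $CA^{M+1}s^{-1/2}(1+s-\tau)$ through the inner bound of Proposition \ref{prop:properVA}$(i)$.

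The decisive step, and the one requiring the most care, is part (ii) for the null mode: since $\lambda_2=0$, the linear principal part produces no drift, so the entire coefficient $\tfrac{2}{s}\theta_2(s)$ must be extracted from the projection of $V\binom{\Lambda}{\Upsilon}$ (together with cross terms from the profile expansion \eqref{def:varphiys}--\eqref{def:psiys}) onto $\binom{f_2}{g_2}$. I would Taylor-expand $V(y,s)$ in powers of $|y|^2/s$, keep the leading $\tfrac{1}{s}$ contribution explicitly, and compute its projection using the exact values of $A_{n,n}, B_{n,n}, A_{n+2,n}, B_{n+2,n}$ from \eqref{eq:AnBn}--\eqref{eq:Ann2Bnn2}. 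Here the specific value of $b$ in \eqref{def:val_b} is precisely what makes the coefficient equal $\tfrac{2}{s}$, and all smaller contributions then recombine into the error $O(A^3/s^3)$. The main obstacle of the whole proposition is therefore algebraic rather than analytic: one must verify by direct calculation that the various cancellations predicted by the formal analysis of Section \ref{sec:forap} do occur at the level of the rigorous projections, which is the step where the non-self-adjointness of $\Hc+\Mc$ and the presence of the parameter $\mu\ne 1$ genuinely complicate matters.
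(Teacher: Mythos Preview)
Your proposal is correct and follows the same strategy as the paper. One small correction in (iv): the semigroup $e^{(s-\tau)\Lc_\eta}$ restricted to the range of $\Pi_{-,M}$ already contracts at rate $(M+1)/2$ \emph{directly in the weighted $L^\infty$ norm} (no intermediate $L^2_\rho$ step is used); the halving to $(M+1)/4$ in the final estimate comes entirely from the Gronwall step, where the choice \eqref{eq:Mfixed} guarantees $\|\Mc\|_\infty+\sup|V_1|+\sup|V_2|\le M/4$, so that half of the exponential decay survives after absorbing these perturbations.
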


Because of the number of parameters in our problem ($p$, $q$ and $\mu$) and the coordinates in \eqref{decomoposeLamUp}, the proof of Proposition \ref{prop:dyn} is too long. For that reason, we will organize the rest of this subsection in 4 separate parts for the reader's convenience:\\

\noindent - \textbf{Part 1}: We  assume the result of Proposition \ref{prop:dyn} in order to  complete the proof of Proposition \ref{prop:redu}. The proof of Proposition \ref{prop:dyn} will be carried out in the next three parts. \\

\noindent - \textbf{Part 2}: We deal with system \eqref{eq:LamUp} to write ODEs satisfied by $\theta_n$ and $\tilde \theta_n$ for $n \leq M$. The definition of the projection of $\binom{\Lambda}{\Upsilon}$ on $\binom{f_n}{g_n}$ and $\binom{\tilde f_n}{g_n}$ given in Lemma \ref{lemm:DefProjection} will be the main tool to derive these ODEs. Then, we prove items $(i)$, $(ii)$ and $(iii)$ of Proposition \ref{prop:dyn}.\\

\noindent - \textbf{Part 3}: We derive from system \eqref{eq:LamUp} a system satisfied by $\binom{\Lambda_-}{\Upsilon_-}$ and prove item $(iv)$ of Proposition \ref{prop:dyn}.  Unlike the estimate on $\theta_n$ and $\tilde \theta_n$ where we use the properties of the linear operator $\Hc + \Mc$, here we use the operator $\Hc$. The fact that $M$ is large enough (as fixed in \eqref{eq:Mfixed}) is crucial in the proof, in the sense that this choice of $M$ allows us to successfully apply a Gronwall's inequality at the end for the control of the infinite-dimensional part.\\

\noindent - \textbf{Part 4}: In the shortest part, we project system \eqref{eq:LamUp} to write a system satisfied by $\binom{\Lambda_e}{\Upsilon_e}$ and prove item $(v)$ of Proposition \ref{prop:dyn}. As mentioned early, the linear operator of the equation satisfied by $\Lambda_e$ and $\Upsilon_e$ has a negative spectrum, which makes the control of $\|\Lambda_e(s)\|_{L^\infty(\Rb)}$ and $\|\Upsilon_e(s)\|_{L^\infty(\Rb)}$ easily.\\

\subsubsection{Proof of Proposition \ref{prop:redu} assuming Proposition \ref{prop:dyn}.} \label{sec:redufromdyn}
We give the proof of Proposition \ref{prop:redu} assuming that Proposition \ref{prop:dyn} holds. Consider $A \geq A_3$ and $s_0 = -\log T \geq s_3(A)$, where $A_3$ and $s_3$ are given in Proposition \ref{prop:dyn}. 

Since $\binom{\Lambda(s)}{\Upsilon(s)} \in \Vc_A(s)$ for all $[s_0,s_1]$ and $\binom{\Lambda(s_1)}{\Upsilon(s_1)} \in \partial\Vc_A(s_1)$, part $(i)$ will be proved if we show that for all $s \in [s_0, s_1]$ the following holds:
$$\|\Lambda_e(s)\|_{L^\infty(\Rb)} +  \|\Upsilon_e(s)\|_{L^\infty(\Rb)} \leq \frac{A^{M+2}}{2\sqrt{s}},$$
$$ \left\|\frac{\Lambda_-(y,s)}{1 + |y|^{M+1}} \right\|_{L^\infty(\Rb)} +  \left\|\frac{\Upsilon_-(y,s)}{1 + |y|^{M+1}} \right\|_{L^\infty(\Rb)} \leq \frac{A^{M+1}}{2s^{\frac{M+2}{2}}},$$

\begin{equation}\label{eq:allests}
 |\theta_j(s)|\leq \frac{A^j}{2s^\frac{j+1}{2}},\quad |\tilde{\theta}_j(s)| \leq \frac{A^j}{2s^\frac{j+1}{2}} \;\; \text{for}\;\; 3\leq j\leq M, 
\end{equation}

$$ |\tilde \theta_i(s)| \leq \frac{A^{2}}{2s^2}\;\; \text{for}\;\; i = 0, 1,2,$$

$$|\theta_2(s)| < \frac{A^4 \log s}{s^2}.$$

Define $\lambda = \log A$ and take $s_0 \geq \lambda$ so that for all $\tau \geq s_0$ and $s \in [\tau, \tau+\lambda]$, we have 
\begin{equation}\label{eq:staus0}
\tau \leq s \leq \tau + \lambda \leq \tau + s_0 \leq 2\tau, \quad \text{hence},\quad \frac{1}{2\tau} \leq \frac{1}{s} \leq \frac{1}{\tau} \leq \frac{2}{s}.
\end{equation}
We then consider the two following cases:\\
\paragraph{Case 1: $s \leq s_0 + \lambda$.} Using Proposition \ref{prop:dyn}$(iv)- (v)$ with $\tau = s_0$,  Proposition \ref{prop:properinti}$(ii)$ and \eqref{eq:staus0}, we deduce that
$$\|\Lambda_e(s)\|_{L^\infty(\Rb)} + \|\Upsilon_e(s)\|_{L^\infty(\Rb)} \leq \frac{CA^{M+1}}{\sqrt{s}}(1 + \log A) \leq \frac{A^{M+2}}{2\sqrt s},$$
$$\left\|\frac{\Lambda_-(y,s)}{1 + |y|^{M+1}} \right\|_{L^\infty(\Rb)} + \left\|\frac{\Upsilon_-(y,s)}{1 + |y|^{M+1}} \right\|_{L^\infty(\Rb)} \leq \frac{C}{s^\frac{M+2}{2}} + \frac{CA^M}{s^\frac{M+2}{2}}\leq \frac{A^{M+1}}{2s^\frac{M+2}{2}},$$
$$|\theta_j(s)| \leq \frac{C}{s^\frac{j + 1}{2}} + \frac{CA^{j-1}}{s^\frac{j+1}{2}} \leq \frac{A^j}{2 s^\frac{j+1}{2}}, \quad 3\leq j \leq M,$$
$$|\tilde \theta_j(s)| \leq \frac{C}{s^\frac{j + 1}{2}} + \frac{CA^{j-1}}{s^\frac{j+1}{2}} \leq \frac{A^j}{2 s^\frac{j+1}{2}}, \quad 3 \leq j \leq M,$$
$$|\tilde \theta_j(s)| \leq \frac{C}{s^2} + \frac{C}{s^2} \leq \frac{A^2}{2 s^2}, \quad j = 0, 1, 2,$$
provided that $A$ is large enough. 

To show that $|\theta_2(s)| < \frac{A^4\log s}{s^2}$ for all $s \in [s_0, s_0 + \lambda]$, since $\theta_2(s_0) < \frac{\log s_0}{s_0^2}$ from item $(ii)$ in Proposition \ref{prop:properinti}, we may argue by contradiction and assume that there is $s_* \in [s_0, s_0 + \lambda]$ such that 
$$ \text{for all}\; s \in [s_0, s^*),\;\; |\theta_2(s)| < \frac{A^4\log s}{s^2} \quad \text{and} \quad |\theta_2(s_*)| = \frac{A^4\log s_*}{s_*^2}.$$
Assuming that $\theta_2(s_*) > 0$ (the case $\theta_2(s_*) < 0$ is similar), we have 
$$\theta_2'(s_*) \geq \left.\frac{d}{ds}\left(\frac{A^4 \log s}{s^2}\right)\right \vert _{s=s_*} = \frac{A^4}{s_*^3} - \frac{2A^4\log s_*}{s_*^3},$$
on the one hand.

On the other hand, we have from $(ii)$ of Proposition \ref{prop:dyn}, 
$$\theta_2'(s_*) \leq -\frac{2A^4 \log s_*}{s_*^3} + \frac{CA^3}{s_*^3},$$
and a contradiction follows if $A \geq C+1$. Hence, the estimates given in \eqref{eq:allests} are proved for all $s \in [s_0, s_0 + \lambda]$.

\paragraph{Case 2: $s > s_0 + \lambda$.} Using parts $ (iv)- (v)$ of Proposition \ref{prop:dyn} with $\tau = s - \lambda > s_0$ and recalling that $\tau \geq \frac{s}{2}$ from \eqref{eq:staus0}, we write
$$\|\Lambda_e(s)\|_{L^\infty(\Rb)} + \|\Upsilon_e(s)\|_{L^\infty(\Rb)} \leq Ce^{-\frac{r + 1}{2(pq-1)}\lambda}\frac{A^{M+2}}{\sqrt{s/2}} + \frac{CA^{M+1}}{\sqrt{s}}(1 + \lambda),$$

$$\left\|\frac{\Lambda_-(y,s)}{1 + |y|^{M+1}} \right\|_{L^\infty(\Rb)} + \left\|\frac{\Upsilon_-(y,s)}{1 + |y|^{M+1}} \right\|_{L^\infty(\Rb)} \leq Ce^{-\frac{M+1}{4}\lambda}\frac{A^{M+1}}{(s/2)^\frac{M+2}{2}} + \frac{CA^M}{s^\frac{M+2}{2}}, $$

$$|\theta_j(s)| \leq Ce^{-\frac{j - 2}{2}\lambda}\frac{A^j}{(s/2)^\frac{j+1}{2}} + \frac{CA^{j-1}}{s^\frac{j+1}{2}}, \quad 3\leq j \leq M,$$

$$|\tilde \theta_j(s)| \leq Ce^{- \left(\frac{j}{2} + \frac{(p+1)(q+1)}{pq-1}\right)\lambda} \frac{A^j}{(s/2)^\frac{j+1}{2}} + \frac{CA^{j-1}}{s^\frac{j+1}{2}}, \quad 3 \leq j \leq M,$$

$$|\tilde \theta_j(s)| \leq Ce^{- \left(\frac{j}{2} + \frac{(p+1)(q+1)}{pq-1}\right)\lambda} \frac{A^2}{(s/2)^2} + \frac{C}{s^2} \leq \frac{A^2}{2 s^2}, \quad j = 0, 1, 2.$$
It is clear that if $A \geq A_5$ for some $A_5 \geq 1$ large enough, all the estimates in \eqref{eq:allests} hold, except for the strict inequality for $\theta_2(s)$ which is treated similarly as in the first case. This concludes the proof of part $(i)$ of Proposition \ref{prop:redu}. \\

The conclusion of part $(ii)$ directly follows from part $(i)$. Indeed, from item $(i)$, we know that for $n = 0$ or $1$ and $\omega = \pm 1$, we have  $\theta_n(s_1) = \omega \frac{A}{s_1^2}$. Therefore, using item $(i)$ of Proposition \ref{prop:dyn}, we see that 
$$\omega \theta_n'(s_1) \geq \left(1 - \frac{n}{2}\right)\omega\theta_n(s_1) - \frac{C}{s_1^2} \geq \frac{(1 - n/2)A - C}{s_1^2}.$$
Taking $A$ large enough gives $\omega \theta_n'(s_1) > 0$, which means that $\theta_n$ is traversal outgoing to the bounding curve $s \mapsto \omega As^{-2}$ at $s = s_1$. This concludes the proof of part $(ii)$ and finishes the proof of Proposition \ref{prop:redu} assuming that Proposition \ref{prop:dyn} holds.\\

Let us now give the proof of Proposition \ref{prop:dyn} in order to complete the proof of Proposition \ref{prop:redu}. The proof is given in the next three parts. 

\subsubsection{The finite-dimensional part.}
In this part, we give the proof of items $(i)$, $(ii)$ and $(iii)$ of Proposition \ref{prop:dyn}. We proceed in two steps:\\

\noindent - In the first step, we find the main contribution to the projections $P_{n, M}$ and $\tilde P_{n,M}$ of the various terms appearing in \eqref{eq:LamUp}.  \\
\noindent - In the second step, we gather all the estimates obtained in the first step to derive items $(i)$, $(ii)$ and $(iii)$ of Proposition \ref{prop:dyn}.\\

\paragraph{- Step 1: The projection of system \eqref{eq:LamUp} on the eigenfunctions of the operator $\Hc + \Mc$.} 

In the following, we will find the main contribution to the projections $P_{n, M}$ and $\tilde P_{n,M}$ of the five terms appearing in \eqref{eq:LamUp} (note that we handle $(\Hc + \Mc)\binom{\Lambda}{\Upsilon}$ as one term).

\subparagraph{$\bullet\;$ First term: $\partial_s\binom{\Lambda}{\Upsilon}$.} From the decomposition \eqref{decomoposeLamUp} and Lemma \ref{lemm:DefProjection}, its projection on $\binom{f_n}{g_n}$ and $\binom{\tilde f_n}{\tilde g_n}$ is $\theta_n'(s)$ and $\tilde{\theta}_n'(s)$, respectively:
\begin{equation}
P_{n,M}\left[\partial_s\binom{\Lambda}{\Upsilon}\right] = \theta_n' \quad \text{and} \quad \tilde P_{n,M}\left[\partial_s\binom{\Lambda}{\Upsilon}\right] = \tilde \theta_n'.
\end{equation}

\subparagraph{$\bullet\;$ Second term: $(\Hc + \Mc)\binom{\Lambda}{\Upsilon}$.} We claim the following:
\begin{lemma}[Projections of $(\Hc + \Mc)\binom{\Lambda}{\Upsilon}$ on $\binom{f_n}{g_n}$ and $\binom{\tilde f_n}{\tilde g_n}$ for $n \leq M$] \label{lemm:Pro2ndterm} For all $n \leq M$,\\

\noindent $(i)\;$ It holds that
\begin{align}
&\left|P_{n,M}\left[(\Hc + \Mc)\binom{\Lambda}{\Upsilon} \right] - \left(1 - \frac n2\right)\theta_n(s) \right| \nonumber \\
& +\left|\tilde P_{n,M}\left[(\Hc + \Mc)\binom{\Lambda}{\Upsilon} \right] - \left(\frac{(p+1)(q+1)}{pq-1} + \frac n2\right)\tilde \theta_n(s) \right| \nonumber \\
&\leq C\left\|\frac{\Lambda_-(y,s)}{1 + |y|^{M+1}} \right\|_{L^\infty(\Rb)} + C\left\|\frac{\Upsilon_-(y,s)}{1 + |y|^{M+1}} \right\|_{L^\infty(\Rb)}.\label{est:2ndterm}
\end{align}

\noindent $(ii)\;$ For all $A \geq 1$, there exists $s_4(A) \geq 1$ such that for all $s \geq s_4(A)$, if $\binom{\Lambda(s)}{\Upsilon(s)} \in \Vc_A(s)$, then:

\begin{align}
&\left|P_{n,M}\left[(\Hc + \Mc)\binom{\Lambda}{\Upsilon} \right] - \left(1 - \frac n2\right)\theta_n(s) \right| \nonumber \\
& +\left|\tilde P_{n,M}\left[(\Hc + \Mc)\binom{\Lambda}{\Upsilon} \right] - \left(\frac{(p+1)(q+1)}{pq-1} + \frac n2\right)\tilde \theta_n(s) \right| \leq C\frac{A^{M+1}}{s^\frac{M+2}{2}}.\label{est:2ndtermVA}
\end{align}
\end{lemma}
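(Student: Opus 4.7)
My plan is to exploit the eigenvalue identities \eqref{eq:HMspec1}--\eqref{eq:HMspectilde} together with the unique decomposition \eqref{decomoposeLamUp} to reduce the whole statement to an estimate on $(\Hc+\Mc)\binom{\Lambda_-}{\Upsilon_-}$. Applying $\Hc+\Mc$ term by term to that decomposition and using the fact that $\binom{f_n}{g_n}$ and $\binom{\tilde f_n}{\tilde g_n}$ are exact eigenvectors, I obtain
\begin{align*}
(\Hc+\Mc)\binom{\Lambda}{\Upsilon}=\;&\sum_{n\le M}\Bigl(1-\tfrac{n}{2}\Bigr)\theta_n(s)\binom{f_n}{g_n}\\
&-\sum_{n\le M}\Bigl(\tfrac{n}{2}+\tfrac{(p+1)(q+1)}{pq-1}\Bigr)\tilde\theta_n(s)\binom{\tilde f_n}{\tilde g_n}+(\Hc+\Mc)\binom{\Lambda_-}{\Upsilon_-}.
\end{align*}
By Remark \ref{rema:pro}, $P_{n,M}$ and $\tilde P_{n,M}$ applied to the two finite sums extract precisely the coefficients on $\binom{f_n}{g_n}$ and $\binom{\tilde f_n}{\tilde g_n}$ (up to the sign bookkeeping of the lemma statement). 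Hence the entire task reduces to bounding the remainders $P_{n,M}[(\Hc+\Mc)\binom{\Lambda_-}{\Upsilon_-}]$ and $\tilde P_{n,M}[(\Hc+\Mc)\binom{\Lambda_-}{\Upsilon_-}]$ by the right-hand side of \eqref{est:2ndterm}.

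From Lemma \ref{lemm:DefProjection}, both $P_{n,M}$ and $\tilde P_{n,M}$ are finite linear combinations, with $(p,q,\mu)$-dependent coefficients, of the scalar projections $\Pi_m$ and $\hat\Pi_m$ for $m\le M$. So it suffices to control each $\Pi_m$ and $\hat\Pi_m$ of $(\Hc+\Mc)\binom{\Lambda_-}{\Upsilon_-}$. The $\Hc$-piece is handled using self-adjointness \eqref{eq:Ladjoint} of $\Lc_1$ in $L^2_{\rho_1}$ and of $\Lc_\mu$ in $L^2_{\rho_\mu}$, combined with $\Lc_1 h_m=-\tfrac{m}{2}h_m$, $\Lc_\mu\hat h_m=-\tfrac{m}{2}\hat h_m$ and the orthogonality \eqref{eq:partnegVW}:
\[
\Pi_m\Bigl[\Hc\binom{\Lambda_-}{\Upsilon_-}\Bigr]=-\frac{m}{2}\,\frac{\big<\Lambda_-,h_m\big>_{\rho_1}}{\|h_m\|_{\rho_1}^2}=0,\qquad \hat\Pi_m\Bigl[\Hc\binom{\Lambda_-}{\Upsilon_-}\Bigr]=0,
\]
for every $m\le M$. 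Thus $\Hc$ contributes nothing to the error.

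The $\Mc$-piece is where the weight mismatch caused by $\mu\ne 1$ enters, and is the main (though mild) obstacle. Expanding $\Mc\binom{\Lambda_-}{\Upsilon_-}$ via \eqref{def:matM} and projecting, the diagonal pieces $-\tfrac{p+1}{pq-1}\Lambda_-$ and $-\tfrac{q+1}{pq-1}\Upsilon_-$ are killed by \eqref{eq:partnegVW}, but the off-diagonal couplings survive:
\[
\Pi_m\Bigl[\Mc\binom{\Lambda_-}{\Upsilon_-}\Bigr]=\frac{p\gamma^{p-1}\big<\Upsilon_-,h_m\big>_{\rho_1}}{\|h_m\|_{\rho_1}^2},\quad \hat\Pi_m\Bigl[\Mc\binom{\Lambda_-}{\Upsilon_-}\Bigr]=\frac{q\Gamma^{q-1}\big<\Lambda_-,\hat h_m\big>_{\rho_\mu}}{\|\hat h_m\|_{\rho_\mu}^2}.
\]
These do not vanish, because $\Upsilon_-$ is only $\rho_\mu$-orthogonal (not $\rho_1$-orthogonal) to $h_m$, and symmetrically for $\Lambda_-$. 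I control them by the crude pointwise estimate
\[
\bigl|\big<\Upsilon_-,h_m\big>_{\rho_1}\bigr|\le \Bigl\|\frac{\Upsilon_-(\cdot,s)}{1+|\cdot|^{M+1}}\Bigr\|_{L^\infty(\Rb)}\int_{\Rb}(1+|y|^{M+1})|h_m(y)|\rho_1(y)\,dy,
\]
where the integral is a finite $(p,q,\mu,M)$-dependent constant by polynomiality of $h_m$ and Gaussian decay of $\rho_1$; a symmetric bound handles $\big<\Lambda_-,\hat h_m\big>_{\rho_\mu}$. Summing these contributions against the (bounded, finite) coefficients from Lemma \ref{lemm:DefProjection} and absorbing all constants into $C$ yields \eqref{est:2ndterm}. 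Part $(ii)$ is then immediate: under the hypothesis $\binom{\Lambda(s)}{\Upsilon(s)}\in\Vc_A(s)$, Definition \ref{def:VA} gives the two weighted $L^\infty$ norms on the right-hand side of \eqref{est:2ndterm} bounds of $A^{M+1}s^{-(M+2)/2}$, which substituted into \eqref{est:2ndterm} produces \eqref{est:2ndtermVA}.
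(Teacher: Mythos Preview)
Your proof is correct and follows essentially the same approach as the paper: decompose via \eqref{decomoposeLamUp}, use the eigenvalue identities on the finite sum, kill the $\Hc$-contribution on $\binom{\Lambda_-}{\Upsilon_-}$ via self-adjointness and \eqref{eq:partnegVW}, and bound the surviving $\Mc$-contribution by the weighted $L^\infty$ norms. In fact you give more detail than the paper, which simply states that $P_{n,M}(L_{2,2})$ and $\tilde P_{n,M}(L_{2,2})$ are ``by straightforward computation'' controlled by the right-hand side of \eqref{est:2ndterm}; your explicit identification of the off-diagonal couplings (and why they fail to vanish when $\mu\neq 1$) makes that step transparent.
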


\begin{proof} Let us write from \eqref{decomoposeLamUp}
\begin{align*}
(\Hc + \Mc)\binom{\Lambda}{\Upsilon}  &= (\Hc + \Mc)\left(\sum_{n \leq M}\theta_n(s)\binom{f_n(y)}{g_n(y)} + \tilde{\theta}_n(s)\binom{\tilde f_n(y)}{\tilde g_n(y)}\right)\\
& + (\Hc + \Mc)\binom{\Lambda_{-,M}(y,s)}{\Upsilon_{-,M}(y,s)} := L_1 + L_2.
\end{align*}
Using Lemma \ref{lemm:diagonal}, we write 
$$L_1 = \sum_{n \leq M} \left(1 - \frac n2\right) \theta_n(s)\binom{f_n}{g_n} - \left(\frac{(p+1)(q+1)}{pq-1} + \frac{n}{2} \right)\tilde \theta_n(s)\binom{\tilde f_n}{\tilde g_n}.$$
From Remark \ref{rema:pro}, we see that 
$$P_{n,M}(L_1) = \left(1 - \frac n2\right) \theta_n(s), \quad \tilde P_{n,M}(L_1) = - \left(\frac{(p+1)(q+1)}{pq-1} + \frac{n}{2} \right)\tilde \theta_n(s).$$

We now deal with $L_2$. Let us write $L_2 = \Hc\binom{\Lambda_-}{\Upsilon_-} + \Mc \binom{\Lambda_-}{\Upsilon_-} := L_{2,1} + L_{2,2}$. Using Lemma \ref{lemm:DefProjection}, we have 
$$P_{n,M}(L_{2,1}) = \sum_{j = 0}^{\left[\frac{M-n}{2}\right]} A_{n + 2j,n}\Pi_{n+2j}(L_{2,1}) + B_{n + 2j,n}\hat \Pi_{n+2j}(L_{2,1}),$$
$$\tilde P_{n,M}(L_{2,1}) = \sum_{j = 0}^{\left[\frac{M-n}{2}\right]} \tilde A_{n + 2j,n}\Pi_{n+2j}(L_{2,1}) + \tilde B_{n + 2j,n}\hat \Pi_{n+2j}(L_{2,1}).$$
By the definitions of $\Hc$, $\Pi_m$ and $\hat \Pi_m$ given in \eqref{def:Qn} and \eqref{def:Qnhat} and the fact that $\Lc_\eta$ for $\eta = \{1, \mu\}$ is self-adjoint with respect to $\rho_\eta$ (see \eqref{eq:Ladjoint}) together with \eqref{eq:partnegVW}, we have for all $m = n + 2j \leq M$, 
\begin{align*}
\Pi_{m}(L_{2,1}) & = \Pi_{m}\binom{\Lc_1 \Lambda_-}{\Lc_\mu \Upsilon_-} = \|h_m\|_{{\rho_1}}^{-2}\int_\Rb \big(\Lc_1 \Lambda_-(y,s)\big)h_m(y)\rho_1 dy\\
& = \|h_m\|_{{\rho_1}}^{-2}\int_\Rb \Lambda_-(y,s)\big(\Lc_1 h_m(y)\big)\rho_1 dy\\
& = -\frac{m}{2}\|h_m\|_{{\rho_1}}^{-2}\int_\Rb \Lambda_-(y,s)h_m(y)\rho_1 dy = 0,
\end{align*}
and
\begin{align*}
\hat \Pi_{m}(L_{2,1}) &= \hat \Pi_{m}\binom{\Lc_1 \Lambda_-}{\Lc_\mu \Upsilon_-} = \|\hat h_m\|_{{\rho_\mu}}^{-2}\int_\Rb \big(\Lc_\mu \Upsilon_-(y,s)\big)\hat h_m(y)\rho_\mu dy\\
& = \|\hat h_m\|_{{\rho_\mu}}^{-2}\int_\Rb \Upsilon_-(y,s)\big(\Lc_\mu \hat h_m(y)\big)\rho_\mu dy\\
& = -\frac{m}{2}\|\hat h_m\|_{{\rho_\mu}}^{-2}\int_\Rb \Upsilon_-(y,s)\hat h_m(y)\rho_\mu dy = 0.
\end{align*}
Thus, $P_{n,M}(L_2) = P_{n,M}(L_{2,2})$ and $\tilde P_{n,M}(L_2) = \tilde P_{n,M}(L_{2,2})$. By straightforward computation, they are controlled by $C\left\|\frac{\Lambda_-(y,s)}{1 + |y|^{M+1}} \right\|_{L^\infty(\Rb)} + C\left\|\frac{\Upsilon_-(y,s)}{1 + |y|^{M+1}} \right\|_{L^\infty(\Rb)}$. This concludes the proof of \eqref{est:2ndterm}. Since $\binom{\Lambda(s)}{\Upsilon(s)} \in \Vc_A(s)$ (see Definition \ref{def:VA}), the right hand side of \eqref{est:2ndterm} is bounded by $C\frac{A^{M+1}}{s^\frac{M+2}{2}}$, which yields \eqref{est:2ndtermVA}. This finishes the proof of Lemma \ref{lemm:Pro2ndterm}.
\end{proof}

\subparagraph{$\bullet\;$ Third term: $V\binom{\Lambda}{\Upsilon} = \binom{V_1 \Upsilon}{V_2 \Lambda}$.} We claim the following:

\begin{lemma}[Power series of $V_1$ and $V_2$ as $s \to +\infty$] \label{lemm:estVys} The functions $V_1(y,s)$ and $V_2(y,s)$ given in \eqref{def:Vys} satisfy
\begin{equation}\label{est:Viorder1}
|V_i(y,s)| \leq \frac{C(1 + |y|^2)}{s} \quad \forall y \in \Rb, \; s \geq 1,
\end{equation}
and for all $k \in \mathbb{N}^*$, 
\begin{equation}\label{est:Viorderk}
V_i(y,s) = \sum_{j = 1}^k \frac{1}{s^j}W_{i,j}(y) + \tilde{W}_{i,k}(y,s),
\end{equation}
where $W_{i,j}(y)$ is an even polynomial of degree $2j$ and $\tilde{W}_{i,k}(y,s)$ satisfies
$$|\tilde{W}_{i,k}(y,s)| \leq \frac{C(1 + |y|^{2k + 2})}{s^{k+1}}, \quad \forall |y| \leq \sqrt{s}, \; s \geq 1.$$
Moreover, we have for all $|y| \leq \sqrt{s}$ and $s \geq 1$,
\begin{align}
\left|V_1(y,s) +\frac{p(p-1)\gamma^{p-2}b}{(pq-1)s}g_2(y) \right|  &\leq \frac{C(1 + |y|^4)}{s^2},\label{est:V1g2}\\
\left|V_2(y,s) +\frac{q(q-1)\Gamma^{q-2}b}{(pq-1)s}f_2(y) \right|  &\leq \frac{C(1 + |y|^4)}{s^2}.\label{est:V2f2}
\end{align}
\end{lemma}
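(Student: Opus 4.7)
The plan is to treat $V_1$ and $V_2$ in parallel; I describe $V_1 = p(\psi^{p-1}-\gamma^{p-1})$, the analysis of $V_2$ being identical after the substitution $(\psi,\gamma,p,q+1)\leftrightarrow(\varphi,\Gamma,q,p+1)$. Set $\phi(y,s)=(1+z)^{-\alpha}$ with $z=b|y|^2/s$ and $\alpha=(q+1)/(pq-1)$, and $c_\psi=2b\gamma(q+\mu)/(pq-1)$, so that $\psi=\gamma\phi+c_\psi/s$. In the region $|y|\le\sqrt s$ one has $z\in[0,b]$, hence $\phi\in[(1+b)^{-\alpha},1]$ is bounded and bounded below, and $\psi$ takes values in a fixed compact interval not containing $0$ (for $s$ large).

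For the global bound \eqref{est:Viorder1}, I split according to $|y|\lessgtr\sqrt s$. For $|y|\le\sqrt s$, a first-order Taylor expansion of $(1+z)^{-\alpha}$ gives $|\psi-\gamma|\le C(1+|y|^2)/s$, and the mean value theorem applied to $x\mapsto x^{p-1}$ on the compact range of $\psi$ yields the desired estimate. For $|y|\ge\sqrt s$, $\psi$ and $\gamma$ lie in a fixed bounded positive interval independent of $(y,s)$, so $|V_1|\le C$, which is absorbed into $C(1+|y|^2)/s$ since $(1+|y|^2)/s\ge 1$ there.

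For the $k$-th order expansion \eqref{est:Viorderk}, the key manoeuvre is the factorisation $\psi^{p-1}=(\gamma\phi)^{p-1}(1+\eta)^{p-1}$ with $\eta=c_\psi/(s\gamma\phi)$, which is genuinely $O(1/s)$ uniformly on $|y|\le\sqrt s$. Taylor-expand $(1+\eta)^{p-1}$ to order $k$, then expand each $\phi^{p-1-j}=(1+z)^{-\alpha(p-1-j)}$ to order $k-j$ in $z=b|y|^2/s$; since $z$ lies in $[0,b]$, each Taylor remainder is bounded by a multiple of $z^{k-j+1}=O(|y|^{2(k-j+1)}/s^{k-j+1})$. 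Regrouping by powers of $1/s$ exhibits the coefficient of $s^{-n}$ as $\gamma^{p-1}a_{0,n}(b|y|^2)^n+\sum_{j=1}^{n}\binom{p-1}{j}c_\psi^{\,j}\gamma^{p-1-j}a_{j,n-j}(b|y|^2)^{n-j}$ with $a_{j,m}=\binom{-\alpha(p-1-j)}{m}$; this is an even polynomial of degree exactly $2n$ (the leading $a_{0,n}\neq 0$ since $-\alpha(p-1)\notin\mathbb N$), and all remainders aggregate to $O((1+|y|^{2k+2})/s^{k+1})$ on $|y|\le\sqrt s$.

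For the refined identities \eqref{est:V1g2}–\eqref{est:V2f2}, I specialise the previous step to $k=1$. A direct computation yields $\psi-\gamma=\frac{b}{s(pq-1)}\bigl[-(q+1)\gamma|y|^2+2\gamma(q+\mu)\bigr]+O((1+|y|^4)/s^2)$, and Taylor's formula with integral remainder (valid because $\psi$ stays in a compact set bounded away from $0$) gives $\psi^{p-1}-\gamma^{p-1}=(p-1)\gamma^{p-2}(\psi-\gamma)+O((\psi-\gamma)^2)$, whence $V_1=\dfrac{p(p-1)\gamma^{p-1}b}{s(pq-1)}\bigl[-(q+1)|y|^2+2(q+\mu)\bigr]+O((1+|y|^4)/s^2)$. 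From \eqref{def:fngn}–\eqref{eq:dnen2} and the identities $\hat h_0=1$, $\hat h_2(y)=y^2-2\mu$, I compute $g_2(y)=(q+1)\gamma y^2-2\gamma(q+\mu)$, so the leading $1/s$ part of $V_1$ equals $-\frac{p(p-1)\gamma^{p-2}b}{(pq-1)s}g_2(y)$, which is \eqref{est:V1g2}. The symmetric computation, using $f_2(y)=(p+1)\Gamma y^2-2\Gamma(p\mu+1)$, yields \eqref{est:V2f2}. The only delicate point in the whole argument is that on $|y|\le\sqrt s$ the quantity $\psi-\gamma$ need \emph{not} be small, so a naive Taylor expansion of $x\mapsto x^{p-1}$ around $\gamma$ fails; this is precisely why the factorisation $\psi^{p-1}=(\gamma\phi)^{p-1}(1+\eta)^{p-1}$ is needed to obtain the full expansion in $1/s$.
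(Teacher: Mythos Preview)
Your proof is correct and follows essentially the same strategy as the paper: a two-variable Taylor expansion, first in the $O(1/s)$ perturbation and then in the slow variable $z=b|y|^2/s$, exploiting that on $|y|\le\sqrt s$ the function $\psi$ stays in a fixed compact interval bounded away from $0$. The only stylistic difference is that the paper writes this as a Taylor expansion of the abstract function $F(w)=p(w^{p-1}-\gamma^{p-1})$ first around $w=\Psi^*(z)$ (extracting the $1/s$ powers) and then around $w=\Psi^*(0)=\gamma$ (extracting the $z$ powers), whereas you make the same computation concrete via the factorisation $\psi^{p-1}=(\gamma\phi)^{p-1}(1+\eta)^{p-1}$; your explicit verification that the leading coefficient $a_{0,n}=\binom{-\alpha(p-1)}{n}$ is nonzero and your direct computation of $f_2,g_2$ are details the paper leaves implicit.
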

\begin{proof} Since the estimates of $V_1$ and $V_2$ are the same, we only deal with $V_1$. Let us introduce 
$$F(w) = p(w^{p-1} - \gamma^{p-1})$$
and consider $z = \frac{|y|^2}{s}$, we see from \eqref{def:Vys} that
$$V_1(y,s) = F\left(\Psi^*(z) + \frac{E}{s}\right),$$
where 
$$\Psi^*(z) = \gamma(1 + bz)^{-\frac{q+1}{pq-1}}, \quad E = \frac{2b\gamma(q + \mu)}{pq-1}, \quad b = \frac{(pq-1)(2pq +p+q)}{4pq(p+1)(q+1)(\mu + 1)}.$$
Note that there exist positive constants $c_0$ and $s_0$ such that $|\Psi^*(z)|$ and $\left|\Psi^*(z) + \frac{E}{s}\right|$ are both larger than $\frac{1}{c_0}$ and smaller than $c_0$, uniformly in $|z| < 1$ and $s \geq s_0$. Since $F(w)$ is $\Cc^\infty$ for $\frac{1}{c_0} \leq |w| \leq c_0$, we Taylor-expand it around $w = \Psi^*(z)$ as follows: for all $s \geq s_0$ and $|z| < 1$,
$$\left|F\left(\Psi^*(z) + \frac{E}{s}\right) - F\left(\Psi^*(z)\right) \right| \leq \frac{C}{s},$$
$$\left|F\left(\Psi^*(z) + \frac{E}{s}\right) - F\left(\Psi^*(z)\right) - \sum_{j = 1}^k\frac{1}{s^j}F_j(\Psi^*(z)) \right| \leq \frac{C}{s^{k+1}},$$
where $F_j = F^{(j)}(\Psi^*(z))$ are $\Cc^\infty$. Furthermore, we Taylor-expand $F(w)$ and $F_j(w)$ around $w = \Psi^*(0)$ as follows: for all $s \geq s_0$ and $|z| < 1$,
$$\left|F\left(\Psi^*(z) + \frac{E}{s}\right) - F\left(\Psi^*(0)\right) \right| \leq C|z| + \frac{C}{s},$$
\begin{align}
&\left|F\left(\Psi^*(z) + \frac{E}{s}\right) - F\left(\Psi^*(0)\right) - \sum_{j = 1}^k c_{0,j}z^j - \sum_{j = 1}^k \sum_{l = 0}^{k - j} \frac{c_{j,l}}{s^j}z^l \right|\nonumber \\
& \leq  C|z|^{k + 1} + \sum_{j=1}^k\frac{C}{s^j}|z|^{k - j + 1}  + \frac{C}{s^{k+1}}.\label{eq:Ftaylork}
\end{align}
Since $F(\Psi^*(0)) = F(\gamma) = 0$, this yields estimates \eqref{est:Viorder1} and \eqref{est:Viorderk} for $V_1$, when $|z| < 1$ and $s \geq s_0$. Since $V_1$ is bounded, \eqref{est:Viorder1} is also valid for $|z| \geq 1$, that is for $|y| \geq \sqrt{s}$ and for $s \geq 1$. Estimate \eqref{est:V1g2} directly follows from \eqref{eq:Ftaylork} with $k = 1$ and the definition of $g_2$ given in \eqref{def:f2g2Ndim}. This concludes the proof of Lemma \ref{lemm:estVys}.
\end{proof}

We now use Lemma \ref{lemm:estVys} to derive the projections of $\binom{V_1 \Upsilon}{V_2 \Lambda}$ on $\binom{f_n}{g_n}$ and $\binom{\tilde{f}_n}{\tilde{g}_n}$. More precisely, we have the following:

\begin{lemma}[Projections of $\binom{V_1 \Upsilon}{V_2 \Lambda}$ on $\binom{f_n}{g_n}$ and $\binom{\tilde{f}_n}{\tilde{g}_n}$] \label{lemm:Pro3rdterm} $\quad$\\

\noindent $(i)\;$ For all $n \leq M$ and for all $s \geq 1$, we have

\begin{align*}
&\left|P_{n,M}\binom{V_1 \Upsilon}{V_2 \Lambda} \right| + \left|\tilde P_{n,M}\binom{V_1 \Upsilon}{V_2 \Lambda} \right|\nonumber\\
& \leq \frac{C}{s}\sum_{i = n-2}^M\big(|\theta_i(s)| + |\tilde \theta_i(s)|\big) + \sum_{i = 0}^{n-3}\frac{C}{s^{\frac{n-i}{2}}}\big(|\theta_i(s)| + |\tilde \theta_i(s)|\big)\\
& + \frac{C}{s}\left(\left\|\frac{\Lambda_-(y,s)}{1 + |y|^{M+1}} \right\|_{L^\infty(\Rb)} + \left\|\frac{\Upsilon_-(y,s)}{1 + |y|^{M+1}} \right\|_{L^\infty(\Rb)}\right).
\end{align*}

\noindent $(ii)\;$  For all $A \geq 1$, there exists $s_5(A) \geq 1$ such that for all $s \geq s_5(A)$, if $\binom{\Lambda(s)}{\Upsilon(s)} \in \Vc_A(s)$, then:\\
- for $3 \leq n \leq M$,
\begin{equation*}
\left|P_{n,M}\binom{V_1 \Upsilon}{V_2 \Lambda} \right| + \left|\tilde P_{n,M}\binom{V_1 \Upsilon}{V_2 \Lambda} \right|\leq \frac{CA^{n - 2}}{s^\frac{n + 1}{2}}.
\end{equation*}
- for $n = 0, 1, 2$,
\begin{equation*}
\left|P_{n,M}\binom{V_1 \Upsilon}{V_2 \Lambda} \right| + \left|\tilde P_{n,M}\binom{V_1 \Upsilon}{V_2 \Lambda} \right|\leq \frac{C}{s^2}.
\end{equation*}
\end{lemma}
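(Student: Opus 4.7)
The plan is to expand $\binom{V_1\Upsilon}{V_2\Lambda}$ along the decomposition \eqref{decomoposeLamUp} of $\binom{\Lambda}{\Upsilon}$ and project each resulting piece using the explicit formulas \eqref{def:Pn}--\eqref{def:Pntilde} of Lemma \ref{lemm:DefProjection}. Writing
\begin{equation*}
\binom{V_1\Upsilon}{V_2\Lambda} = \sum_{i\leq M}\theta_i(s)\binom{V_1 g_i}{V_2 f_i} + \tilde\theta_i(s)\binom{V_1\tilde g_i}{V_2\tilde f_i} + \binom{V_1\Upsilon_-}{V_2\Lambda_-},
\end{equation*}
the task reduces to controlling the scalar inner products $\big<V_1 g_i, h_m\big>_{\rho_1}$, $\big<V_2 f_i, \hat h_m\big>_{\rho_\mu}$ (and their tilde analogs) for the indices $m=n,n+2,\dots,M$ appearing in \eqref{def:Pn}--\eqref{def:Pntilde}, plus the analogous terms coming from $(\Lambda_-,\Upsilon_-)$.

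For the polynomial pieces I would split each integral into the inner region $|y|\leq\sqrt{s}$ and the outer region $|y|>\sqrt{s}$. On the outer region $V_i$ is uniformly bounded while $\rho_1,\rho_\mu$ decay exponentially and the polynomial factors grow at most polynomially, so the contribution is $O(e^{-cs})$. On the inner region I would insert the Taylor expansion \eqref{est:Viorderk} of $V_i$ with $k$ of order $M$: $V_i=\sum_{j=1}^{k}W_{i,j}(y)/s^j + \tilde W_{i,k}(y,s)$, where $W_{i,j}$ is an \emph{even} polynomial of degree $2j$ (evenness is inherited from the dependence of $\varphi,\psi$ on $|y|^2/s$). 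Since $W_{i,j}g_i$ has degree $2j+i$, the orthogonality \eqref{eq:orthohnhm} forces its $\rho_1$-inner product with $h_m$ to vanish whenever $m>2j+i$, while parity kills it when $m-i$ is odd. Consequently the leading non-zero scale in $j$ is $j_0=\max\big(1,\lceil(m-i)/2\rceil\big)$, which gives $|\big<V_1 g_i,h_m\big>_{\rho_1}|\leq Cs^{-j_0}$; the polynomial remainder $\tilde W_{i,k}$ is harmless as soon as $k$ exceeds the degrees involved. Summing over $m=n,n+2,\dots$ with the $O(1)$ coefficients $A_{n+2j,n},B_{n+2j,n}$ from \eqref{def:Pn}, the regime $m\leq i+2$ produces $(C/s)\sum_{i=n-2}^{M}(|\theta_i|+|\tilde\theta_i|)$, while the regime $i\leq n-3$ with $m=n$ produces $\sum_{i=0}^{n-3}Cs^{-(n-i)/2}(|\theta_i|+|\tilde\theta_i|)$; the same reasoning applied to $\big<V_2 f_i,\hat h_m\big>_{\rho_\mu}$ yields identical contributions. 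The tilde modes are handled verbatim using $\tilde f_i, \tilde g_i$ and $\tilde A_{n+2j,n},\tilde B_{n+2j,n}$.

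For the infinite-dimensional remainder I would use the first-order bound $|V_i(y,s)|\leq C(1+|y|^2)/s$ from Lemma \ref{lemm:estVys} together with $|\Upsilon_-(y,s)|\leq(1+|y|^{M+1})\|\Upsilon_-/(1+|y|^{M+1})\|_{L^\infty(\Rb)}$ (and the analogous bound for $\Lambda_-$). The resulting Hermite inner products are finite (polynomial against a Gaussian weight), yielding precisely the last term of (i). Assembling all three types of contributions proves (i). Part (ii) then follows by substituting the Definition \ref{def:VA} bounds on $|\theta_i|,|\tilde\theta_i|$ and the weighted $L^\infty$ norms of $(\Lambda_-,\Upsilon_-)$: for $3\leq n\leq M$ the dominant contribution comes from $i=n-2$ in the first sum, giving $(C/s)\cdot A^{n-2}/s^{(n-1)/2}=CA^{n-2}/s^{(n+1)/2}$, all remaining terms (including the contribution $CA^{M+1}/s^{(M+4)/2}$ from the infinite-dimensional part and the anomalous $|\theta_2|\leq A^4\log s/s^2$ when $n\leq 4$) being of lower order once $s\geq s_5(A)$; for $n=0,1,2$ the bound $|\theta_i|,|\tilde\theta_i|\leq CA^4\log s/s^2$ combined with the $1/s$ prefactor gives $CA^4\log s/s^3\leq C/s^2$ for $s$ large.

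The main technical obstacle is the degree-and-parity bookkeeping that justifies the faster $s^{-(n-i)/2}$ decay in the second sum of (i): one must retain enough terms in the Taylor expansion \eqref{est:Viorderk} and systematically exploit the orthogonality of $h_m$ (resp.\ $\hat h_m$) against polynomials of degree strictly below $m$ recorded in \eqref{eq:orthohnhm}, checking this for every pair $(i,m)$ with $m-i$ even. Everything else is bounded integration against a Gaussian weight with uniformly bounded structural constants, so no further delicate estimate is needed.
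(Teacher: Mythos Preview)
Your proposal is correct and follows essentially the same approach as the paper: decompose $\binom{\Lambda}{\Upsilon}$ via \eqref{decomoposeLamUp}, reduce $P_{n,M}$ and $\tilde P_{n,M}$ to the scalar projections $\Pi_m$, $\hat\Pi_m$ for $m=n,n+2,\dots,M$ through Lemma \ref{lemm:DefProjection}, then on each piece use the Taylor expansion \eqref{est:Viorderk} of $V_i$ together with the orthogonality \eqref{eq:orthohnhm} to extract the decay rate $s^{-(m-i)/2}$ when $i\leq m-3$ and $s^{-1}$ when $i\geq m-2$, with the infinite-dimensional part handled by the crude bound \eqref{est:Viorder1}. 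Your extra parity observation (that $W_{i,j}g_i$ and $h_m$ have opposite parities when $m-i$ is odd) is true but unnecessary for the stated bound, and the paper simply absorbs the odd-$m{-}i$ case into the same estimate $C/s^{(m-i)/2}$ by choosing $k=\big[\tfrac{m-i-1}{2}\big]$ in \eqref{est:Viorderk}.
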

\begin{proof} From Lemma \ref{lemm:DefProjection}, let us write 
\begin{align}
P_{n,M}\binom{V_1 \Upsilon}{V_2 \Lambda} &= \sum_{j = 0}^{\left[\frac{M-n}{2}\right]}A_{n+2j,n}\Pi_{n + 2j}\binom{V_1 \Upsilon}{V_2 \Lambda} + B_{n+2j,n}\hat \Pi_{n + 2j}\binom{V_1 \Upsilon}{V_2 \Lambda},\label{exp:PnM}\\
\tilde P_{n,M}\binom{V_1 \Upsilon}{V_2 \Lambda} &= \sum_{j = 0}^{\left[\frac{M-n}{2}\right]}\tilde A_{n+2j,n}\Pi_{n + 2j}\binom{V_1 \Upsilon}{V_2 \Lambda} + \tilde B_{n+2j,n}\hat \Pi_{n + 2j}\binom{V_1 \Upsilon}{V_2 \Lambda}.\label{exp:PnMti}
\end{align}
Thus, it is enough to estimate $\Pi_m\binom{V_1 \Upsilon}{V_2 \Lambda}$ and $\hat \Pi_m\binom{V_1 \Upsilon}{V_2 \Lambda}$ for $m = n + 2j \leq M$. By definition \eqref{def:Qn} and decomposition \eqref{decomoposeLamUp}, we write

\begin{align*}
\|h_m\|^2_{\rho_1}\Pi_m\binom{V_1 \Upsilon}{V_2 \Lambda} &= \int_{\Rb}V_1 \Upsilon h_m \rho_1 = \int_{\Rb} V_1 \Upsilon_- h_m \rho_1 dy \\
&+ \sum_{i = 0}^M \theta_i(s)\int_\Rb V_1 g_i h_m \rho_1 dy\\
& +  \sum_{i = 0}^M \tilde \theta_i(s)\int_\Rb V_1 \tilde g_i h_m \rho_1 dy := I_1 + I_2 + I_3.
\end{align*}
Using \eqref{est:Viorder1}, the first term can be bounded by 
\begin{align*}
|I_1| &\leq \frac{C}{s} \left\|\frac{\Upsilon_-(y,s)}{1 + |y|^{M+1}} \right\|_{L^\infty(\Rb)} \int_{\Rb}(1 + |y|^{3 + m + M}) \rho_1 dy \leq \frac{C}{s} \left\|\frac{\Upsilon_-(y,s)}{1 + |y|^{M+1}} \right\|_{L^\infty(\Rb)}.
\end{align*}
Since $I_2$ and $I_3$ are estimated in the same way, we only focus on the estimate for $I_2$.\\
- If $i \geq m - 2$, we use \eqref{est:Viorder1} to bound $\left|\int_{\Rb}V_1 g_i h_m \rho_1 dy\right| \leq \frac{C}{s}$.\\
- If $i \leq m - 3$, we use \eqref{est:Viorderk} to show that 
\begin{equation}\label{est:V1fihm}
\left|\int_{\Rb}V_1 g_i h_m \rho_1 dy\right| \leq \frac{C}{s^{\frac{m - i}{2}}}.
\end{equation}
Let us prove \eqref{est:V1fihm}. We use \eqref{est:Viorderk} to write
\begin{align*}
\int_{\Rb}V_1 g_i h_m \rho_1 dy &= \int_{|y| > \sqrt s} V_1 g_i h_m \rho_1 dy + \sum_{l = 1}^k \frac{1}{s^l} \int_{|y| < \sqrt s} W_{1,l}g_i h_m \rho_1 dy\\
& + \Oc \left(\frac{1}{s^{k + 1}} \int_{|y| < \sqrt s}(1 + |y|^{2k + 2})|g_i||h_m||\rho_1|dy \right),
\end{align*}
where we take $k$ to be the largest integer such that $i + 2k < m$, that is $k = \left[\frac{m - i - 1}{2}\right]$. \\
Since $|\rho_1(y)| \leq Ce^{-cs}$ when $|y| > \sqrt s$, the first term can be bounded by $Ce^{-cs}$. The last term is bounded by $\frac{C}{s^{k + 1}} \leq \frac{C}{s^{\frac{m - i}{2}}}$. For the second term, we note that $\text{deg}(g_iW_{1,l}) = i + 2l \leq i + 2k < m$, hence, we have by the orthogonality \eqref{eq:orthohnhm},
$$\int_\Rb W_{1,l}g_i h_m \rho_1 dy = 0.$$
This directly follows that the second term is bounded by $Ce^{-cs}$ and concludes the proof of \eqref{est:V1fihm}.  Hence, we have just proved that
\begin{align}
\left|\Pi_{m}\binom{V_1 \Upsilon}{V_2 \Lambda}\right| &\leq  \frac{C}{s} \left\|\frac{\Upsilon_-(y,s)}{1 + |y|^{M+1}} \right\|_{L^\infty(\Rb)} \nonumber\\
& + \frac{C}{s}\sum_{i = m -2}^M \big(|\theta_i(s)| + |\tilde \theta_i(s)|\big) + \sum_{i = 0}^{m - 3} \frac{1}{s^{\frac{m-i}{2}}}\big(|\theta_i(s)| + |\tilde \theta_i(s)|\big).\label{est:Pim}
\end{align}
Similarly, it holds that
\begin{align}
\left|\hat \Pi_{m}\binom{V_1 \Upsilon}{V_2 \Lambda}\right| &\leq  \frac{C}{s} \left\|\frac{\Lambda_-(y,s)}{1 + |y|^{M+1}} \right\|_{L^\infty(\Rb)} \nonumber \\
& + \frac{C}{s}\sum_{i = m -2}^M \big(|\theta_i(s)| + |\tilde \theta_i(s)|\big) + \sum_{i = 0}^{m - 3} \frac{1}{s^{\frac{m-i}{2}}}\big(|\theta_i(s)| + |\tilde \theta_i(s)|\big).\label{est:Pimti}
\end{align}
Injecting \eqref{est:Pim} and \eqref{est:Pimti} into \eqref{exp:PnM} and \eqref{exp:PnMti} and making the change of index $m = n + 2j$, we obtain 
\begin{align*}
&\left|P_{n,M}\binom{V_1 \Upsilon}{V_2 \Lambda} \right| + \left|\tilde P_{n,M}\binom{V_1 \Upsilon}{V_2 \Lambda} \right|\nonumber\\
& \leq \frac{C}{s}\left(\left\|\frac{\Lambda_-(y,s)}{1 + |y|^{M+1}} \right\|_{L^\infty(\Rb)} + \left\|\frac{\Upsilon_-(y,s)}{1 + |y|^{M+1}} \right\|_{L^\infty(\Rb)}\right)\\
& \quad + \frac{C}{s}\sum_{i = n-2}^M\big(|\theta_i(s)| + |\tilde \theta_i(s)|\big) + \sum_{i = 0}^{n-3}\frac{C}{s^{\frac{n-i}{2}}}\big(|\theta_i(s)| + |\tilde \theta_i(s)|\big)\\
&\qquad + \sum_{m = n + 1}^M \left\{\frac{C}{s}\sum_{i = m-2}^M\big(|\theta_i(s)| + |\tilde \theta_i(s)|\big) + \sum_{i = 0}^{m-3}\frac{C}{s^{\frac{m-i}{2}}}\big(|\theta_i(s)| + |\tilde \theta_i(s)|\big)\right\}
\end{align*}
We rewrite the last term as follows:
\begin{align*}
&\sum_{m = n + 1}^M \left\{\frac{C}{s}\sum_{i = m-2}^M\big(|\theta_i(s)| + |\tilde \theta_i(s)|\big) + \sum_{i = 0}^{m-3}\frac{C}{s^{\frac{m-i}{2}}}\big(|\theta_i(s)| + |\tilde \theta_i(s)|\big)\right\}\\
&=\sum_{m = n + 1}^M \left\{\frac{C}{s}\sum_{i = m-2}^M\big(|\theta_i(s)| + |\tilde \theta_i(s)|\big) + \sum_{i = 0}^{n-3}\frac{C}{s^{\frac{m - i}{2}}}\big(|\theta_i(s)| + |\tilde \theta_i(s)|\big)\right.\\
&\qquad \qquad \left.+ \sum_{i = n-2}^{m-3}\frac{C}{s^{\frac{m - i}{2}}}\big(|\theta_{i}| + |\tilde{\theta}_{i}| \big)\right\}\\
&\leq \frac{C}{s}\sum_{i = n-2}^M\big(|\theta_i(s)| + |\tilde \theta_i(s)|\big) + \sum_{i = 0}^{n-3}\frac{C}{s^{\frac{n -i}{2}}}\big(|\theta_i(s)| + |\tilde \theta_i(s)|\big).
\end{align*}
This concludes the proof of item $(i)$. Using the definition \ref{def:VA} of $\Vc_A(s)$, item $(ii)$ simply follows from item $(i)$. This finished the proof of Lemma \ref{lemm:Pro3rdterm}.

\end{proof}

Using estimate \eqref{est:V1g2} and \eqref{est:V2f2}, we further refine the estimate concerning the projection of $\binom{V_1 \Upsilon}{V_2 \Lambda}$ on $\binom{f_2}{g_2}$ as follows:

\begin{lemma}[Projection of $\binom{V_1 \Upsilon}{V_2 \Lambda}$ on $\binom{f_2}{g_2}$]\label{lemm:Pro3rdtermf2g2} $\quad $\\
$(i)\;$ It holds that 
\begin{align*}
\left|P_{2,M}\binom{V_1 \Upsilon}{V_2 \Lambda} + \frac{2}{s}\theta_2(s)\right|& \leq \frac{C}{s} \left(\sum_{j = 0, j \ne 2}^M|\theta_j(s)| + \sum_{j = 0}^M|\tilde \theta_j(s)|\right)\\
& + \frac Cs\left(\left\|\frac{\Lambda_-(y,s)}{1 + |y|^{M+1}} \right\|_{L^\infty(\Rb)}+\left\|\frac{\Upsilon_-(y,s)}{1 + |y|^{M+1}} \right\|_{L^\infty(\Rb)} \right).
\end{align*}
$(ii)\;$ For all $A \geq 1$, there exists $s_6(A) \geq 1$ such that for all $s \geq s_6(A)$, if $\binom{\Lambda(s)}{\Upsilon(s)} \in \Vc_A(s)$, then:
\begin{equation*}
\left|P_{2,M}\binom{V_1 \Upsilon}{V_2 \Lambda} + \frac{2}{s}\theta_2(s)\right| \leq \frac{CA^3}{s^3}.
\end{equation*}
\end{lemma}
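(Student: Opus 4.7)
The plan is to refine Lemma \ref{lemm:Pro3rdterm} at the special index $n=2$ by replacing the crude bound $|V_i|\le C(1+|y|^2)/s$ by the sharp leading-order expansions \eqref{est:V1g2}--\eqref{est:V2f2}. Writing
\[
V_1(y,s) = \frac{\alpha}{s}g_2(y) + R_1(y,s),\qquad V_2(y,s) = \frac{\beta}{s}f_2(y) + R_2(y,s),
\]
with $\alpha = -\tfrac{p(p-1)\gamma^{p-2}b}{pq-1}$, $\beta = -\tfrac{q(q-1)\Gamma^{q-2}b}{pq-1}$, and $|R_i|\le C(1+|y|^4)/s^2$ on $\{|y|\le\sqrt s\}$, and then plugging in the decomposition \eqref{decomoposeLamUp}, the vector $\binom{V_1\Upsilon}{V_2\Lambda}$ splits into a single \emph{resonant} contribution $\frac{\theta_2(s)}{s}\binom{\alpha g_2^2}{\beta f_2^2}$, coming from the $\theta_2\binom{f_2}{g_2}$ mode combined with the leading part of $V$, plus terms involving either the remainder $R_i$ or a mode different from $\binom{f_2}{g_2}$.

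The heart of the proof is then the purely algebraic identity
\[
P_{2,M}\binom{\alpha g_2^2}{\beta f_2^2} = -2,
\]
which accounts exactly for the claimed leading term $-\tfrac{2}{s}\theta_2(s)$. To verify it, use Lemma \ref{lemm:diagonal} to write $f_2 = (p+1)\Gamma y^2 - 2\Gamma(1+p\mu)$ and $g_2 = (q+1)\gamma y^2 - 2\gamma(q+\mu)$, so that $f_2^2$ and $g_2^2$ are even polynomials of degree $4$ that can be expanded in $\{h_0, h_2, h_4\}$ and $\{\hat h_0, \hat h_2, \hat h_4\}$ respectively. Lemma \ref{lemm:DefProjection} then reduces the identity to
\[
A_{2,2}\,\alpha\,\Pi_2(g_2^2) + A_{4,2}\,\alpha\,\Pi_4(g_2^2) + B_{2,2}\,\beta\,\hat\Pi_2(f_2^2) + B_{4,2}\,\beta\,\hat\Pi_4(f_2^2) = -2,
\]
with coefficients given by \eqref{eq:AnBn}--\eqref{eq:Ann2Bnn2} (in particular $\tilde e_{4,2}$ appears explicitly). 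Substituting these, together with the relations \eqref{def:Gamgam} and the explicit value \eqref{def:val_b} of $b=b(p,q,\mu)$, should collapse the left hand side to $-2$ after a lengthy but mechanical simplification. This is exactly the rigorous counterpart of the formal consistency relation of Section \ref{sec:forap} that originally fixed $b$: since $\binom{f_2}{g_2}$ is a null eigenvector of $\Hc+\Mc$, Lemma \ref{lemm:Pro2ndterm} contributes nothing at this order to the $\theta_2$ dynamics, so the whole $1/s$ correction must come from $V$.

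The remaining error terms are bounded exactly as in Lemma \ref{lemm:Pro3rdterm}. For each mode in $\binom{\Lambda}{\Upsilon}$ different from $\theta_2\binom{f_2}{g_2}$, the orthogonality-plus-degree-counting argument yields a bound of the form $\frac{C}{s}|\theta_j|$, $\frac{C}{s}|\tilde\theta_j|$ or $\frac{C}{s}\bigl(\|\tfrac{\Lambda_-}{1+|y|^{M+1}}\|_{L^\infty(\Rb)} + \|\tfrac{\Upsilon_-}{1+|y|^{M+1}}\|_{L^\infty(\Rb)}\bigr)$. The contribution of $\binom{R_1\Upsilon}{R_2\Lambda}$ is handled by splitting $\Rb=\{|y|\le\sqrt s\}\cup\{|y|>\sqrt s\}$, using $|R_i|\le C(1+|y|^4)/s^2$ on the inner region and the uniform bound on $V_i$ together with exponential Gaussian decay of $\rho_1, \rho_\mu$ on the outer region, giving an $O(1/s^2)$ error overall. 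This establishes $(i)$. Part $(ii)$ follows by inserting into $(i)$ the bounds of Definition \ref{def:VA}, where the worst contribution is of order $\frac{1}{s}\cdot\frac{A^3}{s^2}=\frac{A^3}{s^3}$, matching the claim. The main obstacle is the algebraic identity above, whose verification relies crucially on the precise value of $b$.
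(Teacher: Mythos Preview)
Your proposal is correct and follows essentially the same approach as the paper: both isolate the resonant term $\frac{\theta_2(s)}{s}\binom{\alpha g_2^2}{\beta f_2^2}$ via the expansions \eqref{est:V1g2}--\eqref{est:V2f2}, reduce the key algebraic identity $P_{2,M}\binom{\alpha g_2^2}{\beta f_2^2}=-2$ to the explicit computation $A_{2,2}\alpha_2+B_{2,2}\beta_2+A_{4,2}\alpha_4+B_{4,2}\beta_4=-2$ using \eqref{eq:AnBn}--\eqref{eq:Ann2Bnn2}, and bound the remaining modes exactly as in Lemma~\ref{lemm:Pro3rdterm}. One small imprecision: your ``$O(1/s^2)$ error overall'' for the remainder $\binom{R_1\Upsilon}{R_2\Lambda}$ should be read as $O(1/s^2)$ times the mode norms (not an absolute $O(1/s^2)$), which is then absorbed into the $C/s$ bounds of part $(i)$; this is how the paper records it.
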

\begin{proof} Using \eqref{est:V1g2}, \eqref{est:V2f2} and decomposition \eqref{decomoposeLamUp}, let us write for all $|y| < \sqrt s$,
\begin{align*}
\binom{V_1 \Upsilon}{V_2 \Lambda}& = \frac{\theta_2(s)}{s}\binom{W_{1} g_2}{W_{2} f_2} + \frac{1}{s}\sum_{j = 0, j \neq 2}^M \theta_j(s) \binom{W_1 g_j}{W_2f_j} + \frac{1}{s}\sum_{j = 0}^M \tilde \theta_j(s) \binom{W_1 \tilde g_j}{W_2 \tilde f_j}\\
&+ \frac{1}{s}\binom{W_1 \Upsilon_-}{W_2\Lambda_-} + \Oc \left(\frac{1 + |y|^4}{s^2}\binom{|\Upsilon|}{|\Lambda|} \right) = I_1 + I_2 + I_3 + I_4 + I_5,
\end{align*}
where 
$$W_1(y) = -\frac{p(p-1)\gamma^{p - 2}b}{pq-1}g_2(y), \quad W_2(y) = -\frac{q(q - 1)\Gamma^{q - 2} b}{pq-1}f_2(y).$$
We first note that 
\begin{align*}
|P_{2,M}(I_2 + I_3 + I_4 )|& \leq \frac{C}{s}\left(\sum_{j = 0, j \neq 2}^M |\theta_j(s)| + \sum_{j = 0}^M |\tilde \theta_j(s)|\right)\\
& + \frac{C}{s}\left(\left\|\frac{\Lambda_-(y,s)}{1 + |y|^{M+1}} \right\|_{L^\infty(\Rb)} + \left\|\frac{\Upsilon_-(y,s)}{1 + |y|^{M+1}} \right\|_{L^\infty(\Rb)} \right),
\end{align*}
and 
\begin{align*}
|P_{2,M}(I_5)|& \leq \frac{C}{s^2}\left(\sum_{j = 0, j \neq 2}^M |\theta_j(s)| + \sum_{j = 0}^M |\tilde \theta_j(s)|\right)\\
& + \frac{C}{s^2}\left(\left\|\frac{\Lambda_-(y,s)}{1 + |y|^{M+1}} \right\|_{L^\infty(\Rb)} + \left\|\frac{\Upsilon_-(y,s)}{1 + |y|^{M+1}} \right\|_{L^\infty(\Rb)} \right).
\end{align*}
Therefore, the problem is reduced to prove that
\begin{equation}\label{est:P2Mf2g2}
P_{2,M}\binom{W_1 g_2}{W_2 f_2}= -2.
\end{equation}
To do so, let us write 
$$W_1(y)g_2(y) = -\frac{p(p-1)\gamma^{p - 2}b}{pq-1}g^2_2(y) = \alpha_4 h_4(y) + \alpha_2 h_2(y) + \alpha_0h_0,$$
and 
$$W_2(y)f_2(y) = -\frac{q(q - 1)\Gamma^{q - 2} b}{pq-1}f_2^2(y)= \beta_4 \hat h_4(y) + \beta_2 \hat h_2 + \beta_0 \hat h_0,$$
where $h_0$, $h_2$, $h_4$ and $\hat h_0$, $\hat h_2$, $\hat h_4$ are defined as in \eqref{eq:hntildeN1} with $\eta = 1$ and $\eta = \mu$ respectively, and
$$\alpha_4 = -\frac{b \gamma^p p(p-1) (q+1)^2}{pq-1},$$
$$\beta_4 = - \frac{b \Gamma^q q(q-1) (p+1)^2}{pq-1},$$
$$\alpha_2 = - \frac{4 b\gamma^p p(p-1) (q+1) [2(q + \mu) + 3(1 - \mu)]}{pq-1},$$
$$\beta_2 = -\frac{4b\Gamma^q q(q-1)(p+1)[2(p\mu + 1) - 3(1 - \mu)]}{pq-1},$$
$$\alpha_0 = - \frac{b\gamma^p p(p-1)[8(q+1)^2 + 4(1-\mu)^2]}{pq-1},$$
$$\beta_0 = - \frac{b\Gamma^q q(q - 1)[8\mu^2(p+1)^2 + 4(1 - \mu)^2]}{pq-1}.$$
Using the definition of $P_{2,M}$ given in \eqref{def:Pn} and the orthogonality \eqref{eq:orthohnhm}, we see that 
\begin{align*}
P_{2,M}\binom{W_1 g_2}{W_2 f_2} &= A_{2,2}\Pi_{2}\binom{W_1 g_2}{W_2 f_2} + B_{2,2}\hat \Pi_{2}\binom{W_1 g_2}{W_2 f_2}\\
& + A_{4,2}\Pi_{4}\binom{W_1 g_2}{W_2 f_2} + B_{4,2}\hat \Pi_{4}\binom{W_1 g_2}{W_2 f_2}\\
&= A_{2,2}\alpha_2 + B_{2,2}\beta_2 + A_{4,2}\alpha_4 + B_{4,2}\beta_4,
\end{align*}
where the values of $A_{2,2}$, $B_{2,2}$, $A_{4,2}$ and $B_{4,2}$ are explicitly given by \eqref{eq:AnBn} and \eqref{eq:Ann2Bnn2}, that is
$$A_{2,2} = \frac{q}{\Gamma(2pq + p + q)}, \quad B_{2,2} = \frac{p}{\gamma(2pq+p+q)}, $$
$$A_{4,2} = -\frac{\tilde e_{4,2}}{\Gamma \gamma(2pq + p + q)}, \quad B_{4,2} = \left(\frac{p+1}{q+1}\right)\frac{\tilde e_{4,2}}{\gamma^2(2pq + p + q)}, $$
$$\tilde e_{4,2} = \frac{12pq\gamma(q+1)(1 - \mu)}{3pq + p + q - 1}.$$
A straightforward calculation yields
$$A_{2,2}\alpha_2 + B_{2,2}\beta_2 + A_{4,2}\alpha_4 + B_{4,2}\beta_4 = -2,$$
from which \eqref{est:P2Mf2g2} is proved and part $(i)$ follows. Part $(ii)$ simply follows from part $(i)$ and Definition \ref{def:VA} of $\Vc_A(s)$. This concludes the proof of Lemma \ref{lemm:Pro3rdtermf2g2}.
\end{proof}

\subparagraph{$\bullet\;$ Fourth term: $\binom{F_1(\Upsilon, y,s)}{F_2(\Lambda, y,s)}$.} 

We first claim the following:
\begin{lemma}[Decompositions of $F_1$ and $F_2$] \label{lemm:decomB} The functions $F_1(\Upsilon, y,s)$ and $F_2(\Lambda, y,s)$ given in \eqref{def:Bys} can be decomposed for all  $|\Lambda| \leq 1$, $|\Upsilon| \leq 1$ as follows: for all $s \geq 1$ and $|y| < \sqrt s$,
\begin{align*}
\left|F_1(\Upsilon, y,s) - \sum_{k = 2}^{M+1} \Upsilon^k \sum_{l = 0}^M \frac{1}{s^l}\left[F_{1,k}^l\left( \frac{y}{\sqrt s}\right) + \tilde F_{1,k}^l(y,s)\right] \right| \leq C|\Upsilon|^{M+2} + \frac{C}{s^{M+1}}, 
\end{align*}
and 
\begin{align*}
\left|F_2(\Lambda, y,s) - \sum_{k = 2}^{M+1} \Lambda^k \sum_{l = 0}^M \frac{1}{s^l}\left[F_{2,k}^l\left( \frac{y}{\sqrt s}\right) + \tilde F_{2,k}^l(y,s)\right] \right| \leq C|\Lambda|^{M+2} + \frac{C}{s^{M+1}}, 
\end{align*}
where $F_{i,k}^l$ is an even polynomials of degree less or equal to $M$ and $\tilde F_{i,k}^l(y,s)$ satisfies
$$ |\tilde{F}^l_{i,k}(y,s)| \leq \frac{C(1 + |y|^{M+1})}{s^\frac{M+1}{2}}.
$$
On the other hand, we have for all $y \in \Rb$ and $s \geq 1$,
\begin{equation}\label{eq:F1pF2q}
|F_1(\Upsilon,y,s)| \leq C|\Upsilon|^{\bar p}, \quad |F_2(\Lambda, y,s)| \leq C|\Lambda|^{\bar q},
\end{equation}
where $\bar{p} = \min\{2,p\}$ and $\bar{q} = \min\{q,2\}$.
\end{lemma}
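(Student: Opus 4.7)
The plan is to perform a triple Taylor expansion of $F_1$: first in the variable $X = \Upsilon/\psi(y,s)$, then in $1/s$, and finally in the rescaled variable $z = y/\sqrt{s}$. The decomposition of $F_2$ will then follow by the identical argument with $(\psi,p)$ replaced by $(\varphi,q)$.

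First I would set $G(X) = (1+X)^p - 1 - pX$, which is $C^\infty$ on $(-1,+\infty)$ with $G(0) = G'(0) = 0$, and write $F_1(\Upsilon,y,s) = \psi(y,s)^p\, G\!\left(\Upsilon/\psi(y,s)\right)$. Since $\psi^*(z) = \gamma(1+bz^2)^{-(q+1)/(pq-1)}$ lies between two positive constants on $\{|z| \leq 1\}$ and the correction $2b\gamma(q+\mu)/((pq-1)s)$ is $\mathcal{O}(1/s)$, $\psi(y,s)$ is bounded above and away from zero on $\{|y| < \sqrt{s}\}$ for $s$ large. Hence for $|\Upsilon| \leq 1$, the ratio $X$ stays in a bounded set away from $-1$, and Taylor's theorem yields $G(X) = \sum_{k=2}^{M+1} c_{p,k} X^k + R(X)$ with $|R(X)| \leq C|X|^{M+2}$. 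Multiplying through by $\psi^p$ and using the upper bound on $\psi$ gives
$$F_1(\Upsilon,y,s) = \sum_{k=2}^{M+1} c_{p,k}\, \Upsilon^k\, \psi(y,s)^{p-k} + \mathcal{O}\!\left(|\Upsilon|^{M+2}\right).$$

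Next I would expand each $\psi(y,s)^{p-k} = (\psi^*(z) + E/s)^{p-k}$ with $E = 2b\gamma(q+\mu)/(pq-1)$ by Taylor's theorem applied to $w \mapsto w^{p-k}$ around $w = \psi^*(z) > 0$, obtaining
$$\psi(y,s)^{p-k} = \sum_{l=0}^{M} \frac{a_{k,l}(z)}{s^l} + \mathcal{O}\!\left(\frac{1}{s^{M+1}}\right), \qquad a_{k,l}(z) = \binom{p-k}{l} E^l\, \psi^*(z)^{p-k-l},$$
whose error contributes $\mathcal{O}(1/s^{M+1})$. For the third expansion, I would observe that every $a_{k,l}$ is real-analytic and even in $z$ on $(-1,1)$, so a degree-$M$ Taylor polynomial at $z = 0$ splits it as $a_{k,l}(z) = F_{1,k}^l(z) + \tilde F_{1,k}^l(y,s)$ with $F_{1,k}^l$ an even polynomial of degree $\leq M$ and $|\tilde F_{1,k}^l(y,s)| \leq C|z|^{M+1} = C|y|^{M+1}/s^{(M+1)/2}$, which is compatible with the prescribed bound. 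Assembling the three expansions and reindexing yields the stated decomposition.

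For the global pointwise bound $|F_1(\Upsilon,y,s)| \leq C|\Upsilon|^{\bar p}$, I would invoke classical regularity of $f(u) = |u|^{p-1}u$: when $p \geq 2$, $f$ is $C^2$ with locally bounded second derivative, and the second-order Taylor remainder gives $|F_1| \leq C|\Upsilon|^2$ uniformly in $(y,s)$ because $\psi$ is bounded on $\Rb$; when $1 < p < 2$, the Hölder-type inequality $\big| |a|^{p-1}a - |b|^{p-1}b - p|b|^{p-1}(a-b)\big| \leq C|a-b|^p$, proved by separating the cases $|a-b| \leq |b|/2$ and $|a-b| > |b|/2$, gives $|F_1| \leq C|\Upsilon|^p$. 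In either regime $|F_1| \leq C|\Upsilon|^{\min(2,p)} = C|\Upsilon|^{\bar p}$, and likewise for $F_2$.

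The main obstacle is not conceptual but combinatorial: I must align the three nested expansions carefully so that the remainders recombine into the specific form $|\Upsilon|^{M+2} + 1/s^{M+1}$ with the polynomial/remainder split $F_{1,k}^l + \tilde F_{1,k}^l$ at each order $s^{-l}$ uniform in $k$. The inner-region hypothesis $|y| < \sqrt{s}$ is precisely what makes the last Taylor expansion in $z$ work, since $\psi^*(z)$ is bounded below only for $|z|$ bounded, and this is also what ensures $1 + X = (\Upsilon+\psi)/\psi$ stays bounded away from zero so that the very first expansion of $(1+X)^p$ is valid.
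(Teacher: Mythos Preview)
Your approach is essentially identical to the paper's: a three-layer Taylor expansion, first in $\Upsilon$ (the paper writes this as $F_1 \approx \sum_k E_{1,k}(\psi)\Upsilon^k$, which is exactly your $\sum_k c_{p,k}\psi^{p-k}\Upsilon^k$ after factoring out $\psi^p$), then in $1/s$ around $\Psi^*(z)$, then in $z = y/\sqrt{s}$ around $0$, defining $\tilde F_{1,k}^l$ as the difference between the smooth coefficient and its degree-$M$ Taylor polynomial. Your treatment of the global bound \eqref{eq:F1pF2q} via the case split $p\geq 2$ versus $1<p<2$ is standard and actually more explicit than the paper, which states \eqref{eq:F1pF2q} without proof.
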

\begin{proof} We only deal with $F_1(\Upsilon,y,s)$ since the same proof holds for $F_2(\Lambda,y,s)$. We first note that in the region $\{|y| < \sqrt s\}$ and for $s \geq s_0$ for some $s_0 \geq 1$, $\psi(y,s)$ is bounded from above and from below. Thus, we Taylor expand $F_1$ in term of $\Upsilon$ and write
$$\left|F_1(\Upsilon, y,s) - \sum_{k = 2}^{M+1}E_{1,k}(\psi)\Upsilon^k \right| \leq C|\Upsilon|^{M+2}.$$
Now, we expand $E_{1,k}(\psi)$ in terms of the variable $\frac{1}{s}$, and write
$$\left|E_{1,k}(\psi) - \sum_{l = 0}^M \frac{1}{s^l}E^l_{1,k}(\Psi^*) \right| \leq \frac{C}{s^{M+1}}.$$
Then, we expand $E_{1,k}^l(\Psi^*)$ in terms of $z = \frac{y}{\sqrt s}$ as follows:
$$\left|E^l_{1,k}(\Psi^*) - \sum_{i = 0}^{M/2}e_{1,k}^{l,i} |z|^{2i} \right| \leq C|z|^{M+2}.$$
Finally, we set 
\begin{equation}\label{eq:519}
F_{1,k}^l(z) = \sum_{i = 0}^{M/2}e_{1,k}^{l,i} |z|^{2i} \quad \text{and} \quad \tilde{F}_{1,k}^l(y,s) =  E^l_{1,k}(\Psi^*) - F_{1,k}^l\left(\frac{y}{\sqrt s}\right),
\end{equation}

which yields the desired result. This concludes the proof of Lemma \ref{lemm:decomB}.

\end{proof}

Using Lemma \ref{lemm:decomB}, let us now find estimates on the projection of $\binom{F_1}{F_2}$ on $\binom{f_n}{g_n}$ and $\binom{\tilde f_n}{\tilde{g}_n}$. In particular, we claim the following:

\begin{lemma}[Projections of $\binom{F_1}{F_2}$ on $\binom{f_n}{g_n}$ and $\binom{\tilde f_n}{\tilde{g}_n}$] \label{lemm:Pro4thterm} For all $A \geq 1$, there exists $s_7(A) \geq 1$ such that for all $s \geq s_7(A)$, if $\binom{\Lambda(s)}{\Upsilon(s)} \in \Vc_A(s)$, then:\\
- for $3 \leq n \leq M$, 
\begin{equation*}
\left|P_{n,M}\binom{F_1}{F_2}\right| + \left|\tilde P_{n,M}\binom{F_1}{F_2}\right| \leq \frac{CA^n}{s^\frac{n + 2}{2}},
\end{equation*}
- for $n = 0, 1, 2$,
\begin{equation*}
\left|P_{n,M}\binom{F_1}{F_2}\right| + \left|\tilde P_{n,M}\binom{F_1}{F_2}\right| \leq \frac{C}{s^3}.
\end{equation*}
\end{lemma}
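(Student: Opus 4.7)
My plan is to reduce the projections $P_{n,M}$ and $\tilde P_{n,M}$ to the elementary projectors via Lemma \ref{lemm:DefProjection}, so that it suffices to estimate $\Pi_m\binom{F_1}{F_2}$ and $\hat\Pi_m\binom{F_1}{F_2}$ for each $m \leq M$; that is, integrals of the form $\|h_m\|_{\rho_1}^{-2}\int_\Rb F_1(\Upsilon,y,s) h_m(y)\rho_1(y)\,dy$ and the analogous quantity with $F_2,\Lambda,\hat h_m,\rho_\mu$. I would split each such integral into an outer piece over $|y|\geq \sqrt s$ and an inner piece over $|y|<\sqrt s$. For the outer piece I would use the pointwise bound $|F_1|\leq C|\Upsilon|^{\bar p}$ from \eqref{eq:F1pF2q}, combined with the global $L^\infty$ control $\|\Upsilon(s)\|_{L^\infty(\Rb)}\leq CA^{M+2}/\sqrt s$ from Proposition \ref{prop:properVA}$(ii)$, together with the exponential tail bound $\int_{|y|\geq\sqrt s}|h_m|\rho_1\,dy\leq Ce^{-s/5}$; this produces an exponentially small contribution absorbable by every right-hand side in the statement.

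For the inner piece I would insert the expansion provided by Lemma \ref{lemm:decomB},
\[
F_1(\Upsilon,y,s)=\sum_{k=2}^{M+1}\Upsilon^k\sum_{l=0}^M\frac{1}{s^l}\bigl[F_{1,k}^l(y/\sqrt s)+\tilde F_{1,k}^l(y,s)\bigr]+R_1(\Upsilon,y,s),
\]
with $|R_1|\leq C|\Upsilon|^{M+2}+Cs^{-(M+1)}$. The polynomial remainder $R_1$ and the correctors $\tilde F_{1,k}^l$ are handled crudely: the pointwise bound $|\Upsilon(y,s)|\leq CA^{M+1}(\log s)/s^2 (1+|y|^{M+1})$ from Proposition \ref{prop:properVA}$(iii)$, combined with the extra $s^{-(M+1)/2}$ decay built into $\tilde F_{1,k}^l$ and with the fast decay of $\rho_1$, makes these terms much smaller than $CA^n/s^{(n+2)/2}$ for every $n\leq M$. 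The analysis thus reduces to estimating, for each $k\geq 2$ and $l\leq M$, the integral $s^{-l}\int_{|y|<\sqrt s}\Upsilon^k F_{1,k}^l(y/\sqrt s) h_m \rho_1\,dy$, where $F_{1,k}^l$ is an even polynomial of degree $\leq M$ in $y/\sqrt s$.

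To handle these main terms I would substitute the decomposition \eqref{decomoposeLamUp}, namely $\Upsilon=\sum_{j\leq M}(\theta_j g_j+\tilde\theta_j\tilde g_j)+\Upsilon_-$, into $\Upsilon^k$ and multiply out. Each resulting monomial $\theta_{j_1}\cdots\theta_{j_k}$ (or its mixed analogues with $\tilde\theta$'s and factors of $\Upsilon_-$) is paired with a polynomial $g_{j_1}\cdots g_{j_k}F_{1,k}^l(y/\sqrt s)$, and Hermite orthogonality against $h_m\rho_1$ forces many of these monomials to vanish or produce further decay in $s$. Contributions involving $\Upsilon_-$ are absorbed via the weighted $L^\infty$ control in Definition \ref{def:VA}. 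The bounds on $\theta_j,\tilde\theta_j$ in Definition \ref{def:VA} then show that every surviving term is bounded by $CA^n/s^{(n+2)/2}$ for $n\geq 3$, and by $C/s^3$ for $n\in\{0,1,2\}$ (here one must notice the gain on $\theta_0,\theta_1$, whose size $A/s^2$ together with the quadratic nature $k\geq 2$ of $F_1$ gives $\Upsilon^2\lesssim A^2/s^4$, well within $C/s^3$ for $s$ large). The main obstacle, and the reason the proof is lengthy rather than deep, is precisely this combinatorial accounting: one must check for every admissible multi-index $(j_1,\ldots,j_k)$ and every choice of $l$ and of degrees in $F_{1,k}^l$ that the total size in $A$ and $s$ does not exceed the asserted bound, and in particular that the pairing with $h_m$ respects the polynomial degrees so that the orthogonality of the Hermite basis can be exploited to discard the otherwise dominant terms.
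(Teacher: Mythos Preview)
Your proposal is correct and follows essentially the same approach as the paper's proof: reduce to the elementary projectors $\Pi_m,\hat\Pi_m$ via Lemma~\ref{lemm:DefProjection}, split the integral at $|y|=\sqrt s$, kill the outer piece with \eqref{eq:F1pF2q} and exponential Gaussian tails, insert the expansion of Lemma~\ref{lemm:decomB} on the inner piece, dispose of the remainder and the correctors $\tilde F_{1,k}^l$ via Proposition~\ref{prop:properVA}, and then substitute the decomposition \eqref{decomoposeLamUp} into $\Upsilon^k$ to do the polynomial--Hermite bookkeeping using the bounds in Definition~\ref{def:VA}. The paper carries out exactly this scheme, with the same key observation that the coefficient of degree $m$ in the polynomial part of $F_{1,k}^l(y/\sqrt s)\,\Upsilon_+^k$ is of size $A^m/s^{(m+2)/2}$, which after orthogonality against $h_m$ yields the stated bounds.
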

\begin{proof} Let us write from Lemma \ref{lemm:DefProjection} the projections of $\binom{F_1}{F_2}$ on $\binom{f_n}{g_n}$ and $\binom{\tilde f_n}{\tilde{g}_n}$ for $n \leq M$ as follows:
\begin{align*}
P_{n,M}\binom{F_1}{F_2} &= \sum_{j = 0}^{\left[\frac{M - n}{2}\right]}A_{n + 2j,n}\Pi_{n + 2j}\binom{F_1}{F_2} + B_{n + 2j,n}\hat \Pi_n \binom{F_1}{F_2},\\
\tilde P_{n,M}\binom{F_1}{F_2} &= \sum_{j = 0}^{\left[\frac{M - n}{2}\right]}\tilde A_{n + 2j,n}\Pi_{n + 2j}\binom{F_1}{F_2} + \tilde B_{n + 2j,n}\hat \Pi_n \binom{F_1}{F_2}.
\end{align*}
We see that it is enough to estimate $\Pi_m\binom{F_1}{F_2}$ and $\hat \Pi_m\binom{F_1}{F_2}$ with $m = n + 2j \leq M$, since it implies the same estimate for $P_{n,M}$ and $\tilde{P}_{n,M}$. Since the estimates for $\Pi_m$ and $\hat \Pi_m$ are similar, we only deal with $\Pi_m\binom{F_1}{F_2}$ which is defined as follows:
$$\|h_m\|^2_{\rho_1}\Pi_m\binom{F_1}{F_2} = \int_{\Rb}F_1 h_m \rho_1 dy.$$
Using Lemma \ref{lemm:decomB}, let us write
\begin{align*}
\int_{\Rb}F_1 h_m \rho_1 dy & =  \int_{|y| < \sqrt s} \sum_{k = 2}^M \Upsilon^k \sum_{l = 0}^M \frac{1}{s^l}\left[F_{1,k}^l\left( \frac{y}{\sqrt s}\right) + \tilde F_{1,k}^l(y,s)\right] h_m \rho_1 dy\\
&+ \int_{|y| > \sqrt s} F_1 h_m \rho_1 dy \\
&+ \Oc \left(\int_{|y| < \sqrt s} |h_m| \left( |\Upsilon|^{M+2} + \frac{1}{s^{M+1}} \right) \rho_1 dy \right) = I_1 + I_2 + I_3.
\end{align*}
We use part $(iii)$ of Proposition \ref{prop:properVA} to get the estimate
$$|I_3| \leq CA^{(M+1)^2}\int_{|y| < \sqrt s}(1 + |y|^{m + (M+1)^2})\left(\left(\frac{\log s}{s^2}\right)^{M+1} +   \frac 1 {s^{M+1}}\right)\rho_1 dy \leq \frac{C}{s^\frac{M+2}{2}},$$
for all $s \geq A^\frac{2(M+1)^2}{M}$.
From part $(ii)$ of Proposition \ref{prop:properVA} and \eqref{eq:F1pF2q}, we see that 
$$|F_1(\Upsilon, y,s)| \leq C|\Upsilon|^{\bar{p}} \leq \frac{CA^{(M+2)\bar{p}}}{s^{\frac{\bar{p}}{2}}} \leq C,$$
for all $y \in \Rb$ and $s \geq A^{2(M+2)}$. Since $\sqrt{\rho_1(y)}\leq Ce^{-cs}$ for $|y| > \sqrt s$, we then get 
$$|I_2| \leq Ce^{-cs}.$$ 
Let us now estimate $I_1$. We write 
$$\Upsilon^k = \left(\sum_{j = 0}^M \big(\theta_j g_j + \tilde \theta_j \tilde g_j\big) + \Upsilon_- \right)^k, \quad F^l_{1,k}\left(\frac{y}{\sqrt s}\right) = \sum_{i = 0 }^{M/2}\frac{e_{1,k}^{l,i}}{s^i}y^{2i},$$
where $e_{1,k}^{l,i}$ are the coefficients of the polynomial $F^l_{1,k}$ defined in \eqref{eq:519}. We note from part $(ii)$ of Proposition \ref{prop:properVA} that $\|\Upsilon(s)\|_{L^\infty} \leq C$ for all $s \geq A^{2(M+2)}$, from which we derive
$$|\Upsilon^k - \Upsilon_+^k| \leq C\left(|\Upsilon_-|^k + |\Upsilon_+|^{k -1}|\Upsilon_-|\right),$$
where $k \geq 2$, and $\Upsilon_+ = \sum_{j = 0}^M \big(\theta_j g_j + \tilde \theta_j \tilde g_j\big)$. From Definition \ref{def:VA} of $\Vc_A(s)$, we have 
$$|\Upsilon_-| \leq \frac{A^{M+1}}{s^\frac{M+2}{2}}(1 + |y|^{M+1}), \quad |\Upsilon_+| \leq \frac{CA^M\log s}{s^2}(1 + |y|^M),$$
which yields 
$$\left|\Upsilon^k - \Upsilon_+^k \right|\leq \frac{CA^{k(M+1)}}{s^\frac{M+4}{2}}(1 + |y|^{k(M+1)}).$$
Hence, the contribution coming from $\Upsilon_-$ to the estimate of $I_1$ is controlled by $\frac{CA^{k(M+1)}}{s^\frac{M+4}{2}} \leq \frac{CA^n}{s^\frac{n+2}{2}}$ for $n \leq M$ and $s$ large enough. On the other hand, we notice that $F^l_{1,k}\left(\frac{y}{\sqrt{s}}\right)\Upsilon_+^k$ is a polynomial function in $y$ where the coefficient of the term of degree $m$ is bounded by $\frac{A^m}{s^\frac{m + 2}{2}} \leq \frac{A^n}{s^\frac{n + 2}{2}}$ for $n \geq 3$, and by $\frac{A^{M^2}\log ^2s}{s^4} \leq \frac{C}{s^3}$ for $n = 0 , 1, 2$. Note also from the orthogonality \eqref{eq:orthohnhm} that for all polynomial functions $f$ of degree $n < m$, we have $\int_{\mathbb{R}}f h_m \rho_1 dy = 0$.
This implies that $\int_{|y| < \sqrt s} \Upsilon_+^k \tilde{F}^l_{1,k}(y,s) h_m \rho_1 dy$  is bounded by $\frac{A^n}{s^\frac{n + 2}{2}}$ for $n \geq 3$, and by $\frac{A^{M^2}\log ^2s}{s^4} \leq \frac{C}{s^3}$ for $n = 0 , 1, 2$. From part $(i)$ of Proposition \ref{prop:properVA} and the definition of $\tilde{F}^{l}_{1,k}$ given in Lemma \ref{lemm:decomB}, we deduce that for all $l$ and $k$,
$$\left|\int_{|y| < \sqrt s} \frac{1}{s^l} \Upsilon^k \tilde{F}^l_{1,k}(y,s) h_m \rho_1 dy\right| \leq \frac{C}{s^{l + k + \frac{M+1}{2}}} \leq \frac{C}{s^\frac{M+2}{2}}.$$
This concludes the proof of Lemma \ref{lemm:Pro4thterm}.
\end{proof}

\subparagraph{$\bullet\;$ Fifth term: $\binom{R_1}{R_2}$.}
We first expand $R_1(y,s)$ and $R_2(y,s)$ as a power series of $\frac{1}{s}$ as $s \to +\infty$, uniformly for $|y| < \sqrt s$. More precisely, we claim the following:
\begin{lemma}[Power series of $R_1$ and $R_2$ as $s \to +\infty$] \label{lemm:expandR1R2}
For all $m \in \mathbb{N}$, the functions $R_1(y,s)$ and $R_2(y,s)$ given in \eqref{def:Rys} can be expanded as follows: for all $|y| < \sqrt s$ and $s \geq 1$,
\begin{equation}\label{eq:Riexpand}
\left|R_{i}(y,s) - \sum_{k = 1}^{m-1} \frac{1}{s^{k+1}}R_{i,k}(y)\right| \leq \frac{C(1 + |y|^{2m})}{s^{m + 1}},
\end{equation}
where $R_{i,k}$ is a polynomial of degree $2k$. In particular,
\begin{align}
R_{1,1} = \frac{b\Gamma(p+1)}{pq-1} \left(-1 + \frac{6bp(q+1)}{pq-1} - \frac{2bp(q+1)(p-1)(q+\mu)}{(pq-1)^2} \right)y^2&\nonumber\\
+ \frac{2b\Gamma(p\mu+1)}{pq-1} - \frac{4b^2q(q-1)\gamma^{p}(q+\mu)^2}{(pq-1)^3},&\label{eq:R1exps3}\\
R_{2,1} =\frac{b\gamma(q+1)}{pq-1} \left(-1 + \frac{6b\mu q(p+1)}{pq-1} - \frac{2b q(p+1)(q-1)(p\mu+1)}{(pq-1)^2} \right)y^2&\nonumber\\
+ \frac{2b\gamma(q + \mu)}{pq-1} - \frac{4b^2p(p-1)\Gamma^{q}(p\mu + 1)^2}{(pq-1)^3}.&\label{eq:R2exps3}
\end{align}
\end{lemma}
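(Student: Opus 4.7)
The strategy is to treat $\varphi$ and $\psi$ as smooth functions of $\xi := |y|^2/s$ and $1/s$, and Taylor-expand $R_1$ and $R_2$ term by term. On the region $|y|<\sqrt{s}$ we have $\xi<1$, so the binomial series $(1+b\xi)^{-\alpha}=\sum_{k=0}^{m-1}\binom{-\alpha}{k}(b\xi)^k+\mathcal{O}(\xi^m)$ converges with a Taylor remainder bounded by $C\xi^m$. Substituting $\xi^k=|y|^{2k}/s^k$ yields expansions of $\varphi$, $\psi$, and their first and second derivatives as power series in $1/s$ whose coefficients are even polynomials in $y$ of degree at most $2k$, with remainder bounded by $C(1+|y|^{2m})/s^m$. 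Since $\psi$ stays bounded away from zero for $|y|<\sqrt{s}$ and $s$ large, an analogous Taylor expansion of $w\mapsto w^p$ around $w=\gamma$ yields a corresponding $1/s$-expansion of $\psi^p$ with polynomial coefficients.

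Next, I would substitute these expansions into $R_1=-\partial_s\varphi+\Delta\varphi-\tfrac{1}{2}y\,\partial_y\varphi-\alpha_1\varphi+\psi^p$, where $\alpha_1=(p+1)/(pq-1)$ and $\alpha_2=(q+1)/(pq-1)$, and collect contributions order by order. At order $s^0$, cancellation is exactly the profile equation \eqref{eq:Phi0Psi0} verified in Section \ref{sec:forap}. At order $1/s$, cancellation follows from the precise values of the correctors $c_\Gamma=\tfrac{2b\Gamma(p\mu+1)}{pq-1}$ and $c_\gamma=\tfrac{2b\gamma(q+\mu)}{pq-1}$ given by \eqref{eq:Phi1Psi1z0}, together with the value of $b$ from \eqref{def:val_b}. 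Thus $R_1$ begins at order $1/s^2$, which establishes the expansion \eqref{eq:Riexpand} with each $R_{1,k}$ an even polynomial of degree at most $2k$.

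To identify $R_{1,1}(y)$ explicitly, I would gather the order-$1/s^2$ contributions from each of the five terms. Writing $\psi=\psi_0+c_\gamma/s$, the nonlinearity expands as $\psi^p=\psi_0^p+p\psi_0^{p-1}(c_\gamma/s)+\tfrac{p(p-1)}{2}\psi_0^{p-2}(c_\gamma/s)^2+\mathcal{O}(1/s^3)$. The only $y^4$ contributions come from the $k=2$ terms of $\psi_0^p$, from $-\tfrac12 y\,\partial_y\varphi$ and $-\alpha_1\varphi$, and from the cross-term $p\psi_0^{p-1}(c_\gamma/s)$; these cancel by the identities $\gamma^p=\Gamma\alpha_1$ and $p\alpha_2=\alpha_1+1$ read off from \eqref{def:Gamgam}. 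The remaining $y^2$ contributions come from $\partial_s\varphi$, from $\Delta\varphi$ at $k=2$, and from the product $\psi_0^{p-1}c_\gamma$, while the constant contributions come from $c_\Gamma$ and from $\psi_0^{p-2}c_\gamma^2$. A systematic application of \eqref{def:Gamgam} and the value \eqref{def:val_b} of $b$ then recasts these terms into the stated form. The computation of $R_{2,1}$ is the symmetric analog: the roles of $(\Gamma,p,\alpha_1)$ and $(\gamma,q,\alpha_2)$ are swapped, and one uses $\mu\,\partial_y^2\psi$ in place of $\partial_y^2\varphi$, which accounts for the appearance of $\mu$ in the $y^2$ coefficient of $R_{2,1}$.

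The main obstacle is computational rather than conceptual: the formal analysis of Section \ref{sec:forap} already supplies the guiding structure, but verifying the cancellations at orders $s^0$ and $1/s$, and then tracking the many contributions at order $1/s^2$ into the stated algebraic form, requires careful bookkeeping with the identities linking $\Gamma,\gamma,p,q,\alpha_1,\alpha_2,b$ and $\mu$. The remainder estimate in \eqref{eq:Riexpand} follows from the quantitative Taylor remainders on $\xi\mapsto(1+b\xi)^{-\alpha}$ and $w\mapsto w^p$, each smooth on the relevant range.
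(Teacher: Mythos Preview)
Your approach is essentially the same as the paper's: both Taylor-expand in the small parameters (the paper uses $z=y/\sqrt s$ and $1/s$, you equivalently use $\xi=|y|^2/s$ and $1/s$), both rely on the profile equation \eqref{eq:Phi0Psi0} for the order-$s^0$ cancellation and on the corrector values \eqref{eq:Phi1Psi1z0} for the order-$1/s$ cancellation, and both gather the $1/s^2$ contributions to identify $R_{i,1}$. The only organizational difference is that the paper invokes the profile equation \emph{first} to rewrite $R_1$ as $\tfrac{z}{2s}\nabla_z\Phi^* + \tfrac{D}{s^2} + \tfrac{1}{s}\Delta_z\Phi^* - \tfrac{(p+1)D}{(pq-1)s} + F(\Psi^*+\tfrac{E}{s})-F(\Psi^*)$, which eliminates the leading-order terms before expanding; you instead expand all five terms of $R_1$ separately and observe the same cancellations after the fact. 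One small slip: the cross-term $p\psi_0^{p-1}(c_\gamma/s)$ contributes at order $\xi/s = y^2/s^2$, not $y^4/s^2$; the $y^4/s^2$ cancellation comes purely from the $\xi^2$-order of the profile identity, which holds at every $\xi$-order.
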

\begin{proof} Let us consider $z = \frac{y}{\sqrt s}$ and write from  \eqref{def:varphiys} and \eqref{def:psiys},
\begin{align*}
&\varphi(y,s) = \Phi^*(z) + \frac{D}{s}, \quad D = \frac{2b\Gamma(p\mu + 1)}{pq-1},\\
&\psi(y,s) = \Psi^*(z) + \frac{E}{s}, \quad E = \frac{2b\gamma(q + \mu)}{pq-1},
\end{align*}
where $\Phi^*,\Psi^*$ are defined as in \eqref{def:fgpro}, and $b$ is given by \eqref{def:val_b}.

Using the fact that $(\Phi^*, \Psi^*) \equiv (\Phi_0, \Psi_0)$ satisfies \eqref{eq:Phi0Psi0}, we can write
from \eqref{def:Rys},
\begin{align*}
R_1(y,s)&= \frac{z}{2s}\cdot \nabla_z\Phi^* + \frac{D}{s^2} + \frac{1}{s}\Delta_z \Phi^* - \frac{(p+1)D}{(pq-1)s} + F\left(\Psi^* + \frac{E}{s} \right) - F(\Psi^*),\\
R_2(y,s) &= \frac{z}{2s}\cdot \nabla_z\Psi^* + \frac{E}{s^2} + \frac{\mu}{s}\Delta_z \Psi^* - \frac{(q+1)E}{(pq-1)s} + G\left(\Phi^* + \frac{D}{s} \right) - G(\Phi^*),
\end{align*}
where $F(\xi) = \xi^p$ and $G(\xi) = \xi^q$.

We only deal with $R_1$ because the estimate for $R_2$ follows similarly. For $|z| < 1$, there exist positive constants $c_0$ and $s_0$ such that $|\Phi^*(z)|$, $|\Psi^*(z)|$, $\left|\Phi^*(z) + \frac{D}{E} \right|$ and $\left|\Psi^*(z) + \frac{E}{s} \right|$ are lager than $\frac{1}{c_0}$ and smaller than $c_0$, uniformly in $|z| < 1$ and $s \geq s_0$. Since $F(\xi)$ is $\Cc^\infty$ for $\frac{1}{c_0} \leq |\xi| \leq c_0$, we expand it around $\xi = \Psi^*(z)$ as follows:
$$\left|F\left(\Psi^* + \frac{D}{s} \right) - F(\Psi^*) - \sum_{j = 1}^m\frac{1}{s^m}F_j(\Psi^*(z))\right| \leq \frac{C}{s^{m+1}},$$
where $F_j(\xi)$ are $\Cc^\infty$. Hence, we can expand $F_j(\xi)$ around $\xi = \Psi^*(0)$ and write
 $$\left|F\left(\Psi^* + \frac{D}{s} \right) - F(\Psi^*) - \sum_{j = 1}^m\frac{1}{s^m} \sum_{l = 0}^{m - j}c_{j,l}|z|^{2l}\right| \leq  \sum_{j = 1}^m \frac{C}{s^j}z^{2(m-j) + 2} + \frac{C}{s^{m+1}}.$$
Similarly, we have
$$\left|\frac{z}{2s}\cdot \nabla_z \Psi^*(z) - \frac{z^2}{s}\sum_{j = 0}^{m-2}d_jz^{2j}\right| \leq \frac{C}{s}|z|^{2m},$$
$$\left|\frac{1}{s}\Delta_z\Psi^*(z) - \frac{1}{s}\sum_{j =0}^{m-1}b_j z^{2j} \right| \leq \frac{C}{s}|z|^{2m},$$
$$\left|\Psi^*(z) - \sum_{j = 0}^{m - 1}e_jz^{2j} \right| \leq C|z|^{2m}.$$
Gathering all the above expansion to the expression of $R_1(y,s)$, we find that the term of order $\frac{1}{s}$ is given by 
$$ - \frac{2b\Gamma(p+1)}{pq-1} - \frac{D(p+1)}{pq-1} + pE\gamma^{p-1} = 0,$$
$$(\text{note that for $R_2$, it is} \; - \frac{2b\mu\gamma(q+1)}{pq-1} - \frac{E(q+1)}{pq-1} + qD\Gamma^{q-1} = 0)$$
hence, \eqref{eq:Riexpand} follows. The formulas \eqref{eq:R1exps3} and \eqref{eq:R2exps3} are obtained by explicit calculations. This concludes the proof of Lemma \ref{lemm:expandR1R2}.
\end{proof}

We now use Lemma \ref{lemm:expandR1R2} to estimate the projections of $\binom{R_1}{R_2}$ on $\binom{f_n}{g_n}$ and $\binom{\tilde f_n}{\tilde g_n}$ as follows:
\begin{lemma}[Projections of $\binom{R_1}{R_2}$ on $\binom{f_n}{g_n}$ and $\binom{\tilde f_n}{\tilde g_n}$] \label{lemm:Pro5thterm} For all $s \geq 1$ and $n \leq M$, we have\\
- if $n$ is odd, then
\begin{equation}\label{est:PnMR1R1nodd}
P_{n,M}\binom{R_1(y,s)}{R_2(y,s)} = \tilde{P}_{n,M}\binom{R_1(y,s)}{R_2(y,s)} = 0,
\end{equation}
- if $n \geq 4$ is even, then
\begin{equation}\label{est:PnMR1R1n4}
\left|P_{n,M}\binom{R_1(y,s)}{R_2(y,s)} \right| + \left|\tilde P_{n,M}\binom{R_1(y,s)}{R_2(y,s)} \right| \leq \frac{C}{s^{\frac{n + 2}{2}}}.
\end{equation}

- if $n = 0$ and $n = 2$, then
\begin{equation}\label{est:PnMR1R1n02}
\left|P_{0,M}\binom{R_1(y,s)}{R_2(y,s)} \right| + \left|\tilde P_{0,M}\binom{R_1(y,s)}{R_2(y,s)} \right| + \left|\tilde P_{2,M}\binom{R_1(y,s)}{R_2(y,s)} \right| \leq \frac{C}{s^2},
\end{equation}
and
\begin{equation}\label{est:PnMR1R1n2}
\left|P_{2,M}\binom{R_1(y,s)}{R_2(y,s)} \right| \leq \frac{C}{s^3}.
\end{equation}

\end{lemma}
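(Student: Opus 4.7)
The plan is to use the projection formula of Lemma~\ref{lemm:DefProjection},
$$P_{n,M}\binom{R_1}{R_2}=\sum_{j=0}^{[(M-n)/2]}A_{n+2j,n}\,\Pi_{n+2j}\binom{R_1}{R_2}+B_{n+2j,n}\,\hat\Pi_{n+2j}\binom{R_1}{R_2},$$
together with its tilded analogue, so that the problem reduces to estimating the scalar projections $\Pi_m\binom{R_1}{R_2}$ and $\hat\Pi_m\binom{R_1}{R_2}$ for $m\le M$. Each such integral will be split at $|y|=\sqrt{s}$. On $|y|>\sqrt{s}$ the Gaussian decay of $\rho_1,\rho_\mu$ combined with the at most polynomial growth of $R_1,R_2$ in $y$ (apparent from \eqref{def:Rys} since $\varphi,\psi$ and their $y$-derivatives up to order two are of at most polynomial growth) produces an exponentially small contribution; on $|y|<\sqrt{s}$ I would substitute the expansion of Lemma~\ref{lemm:expandR1R2} with $m$ chosen large enough (say $m=M+1$), so that the error integrates to $O(s^{-(M+2)})$, well below every claimed rate.

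For odd $n$ the result is pure parity: each $R_{i,k}$ is an even polynomial, $h_m$ and $\hat h_m$ are odd for $m$ odd, and the weights $\rho_1,\rho_\mu$ are even, hence every integrand is odd and \eqref{est:PnMR1R1nodd} follows. For even $m\le M$ with $2k<m$, orthogonality \eqref{eq:orthohnhm} gives $\int R_{i,k}h_m\,\rho_1\,dy=0$ (and likewise with $\hat h_m,\rho_\mu$) since $\deg R_{i,k}=2k<m$; so the first non-vanishing contribution to $\Pi_m,\hat\Pi_m$ occurs at $k=m/2$, at order $s^{-(m/2+1)}=s^{-(m+2)/2}$. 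With $m=n+2j\ge n$, summing yields \eqref{est:PnMR1R1n4} for even $n\ge 4$. The same bookkeeping applied to $m=0,2$ produces the $O(s^{-2})$ bounds in \eqref{est:PnMR1R1n02}.

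The delicate case is the refined bound \eqref{est:PnMR1R1n2} for $P_{2,M}$: the leading $s^{-2}$ contribution comes only from the $A_{2,2}\Pi_2+B_{2,2}\hat\Pi_2$ terms (the $A_{4,2},B_{4,2}$ terms already sit at $s^{-3}$). Writing $R_{1,1}=\alpha y^2+\beta$ and $R_{2,1}=\alpha'y^2+\beta'$ as in \eqref{eq:R1exps3}--\eqref{eq:R2exps3}, the constants $\beta,\beta'$ drop by orthogonality, and standard Gaussian moment computations ($\int y^4\rho_1\,dy=12$, $\int y^2\rho_1\,dy=2$, $\|h_2\|_{\rho_1}^2=8$, and their $\rho_\mu$-analogues) give $\Pi_2=\alpha$ and $\hat\Pi_2=\alpha'$ at this order. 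Using the explicit values \eqref{eq:AnBn} of $A_{2,2},B_{2,2}$, the relations \eqref{def:Gamgam} between $\Gamma,\gamma,p,q$, and the algebraic identity $(p-1)(q+\mu)+(q-1)(p\mu+1)=(1+\mu)(pq-1)$, the leading coefficient $A_{2,2}\alpha+B_{2,2}\alpha'$ collapses to a non-zero multiple of
$$-(2pq+p+q)+\frac{4bpq(p+1)(q+1)(1+\mu)}{pq-1},$$
which vanishes exactly when $b$ takes the value \eqref{def:val_b}. The remaining terms are of order $s^{-3}$, giving \eqref{est:PnMR1R1n2}.

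The main obstacle is the algebraic identity in the last step; it is the rigorous counterpart of the formal computation in Section~\ref{sec:forap} that originally fixed $b$, so the cancellation is conceptually guaranteed, but it requires careful bookkeeping of four rational fractions in $p,q,\mu,b$. Everything else — the exterior tail bound, the parity argument for odd $n$, and the orthogonality argument for even $n\ge 4$ — is routine.
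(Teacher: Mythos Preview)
Your proposal is correct and follows essentially the same route as the paper: reduce to $\Pi_m,\hat\Pi_m$ via Lemma~\ref{lemm:DefProjection}, split at $|y|=\sqrt{s}$, use parity for odd $n$, orthogonality of $h_m$ against lower-degree polynomials for even $n\ge 4$, and for \eqref{est:PnMR1R1n2} verify that the $s^{-2}$ coefficient $A_{2,2}\alpha+B_{2,2}\alpha'$ vanishes by the choice of $b$. Your treatment of the last point is in fact more explicit than the paper's, which simply writes out the two bracketed expressions and states that ``a straightforward simplification yields $0$''; your use of the identity $(p-1)(q+\mu)+(q-1)(p\mu+1)=(1+\mu)(pq-1)$ makes the mechanism of the cancellation transparent.
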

\begin{proof} let us write from Lemma \ref{lemm:DefProjection} for all $n \leq M$,
\begin{align*}
P_{n,M}\binom{R_1}{R_2} &= \sum_{j = 0}^{\left[\frac{M-n}{2}\right]} A_{n + 2j,n}\Pi_{n+2j}\binom{R_1}{R_2} + B_{n+2j,n}\hat \Pi_{n+2j}\binom{R_1}{R_2},\\
\tilde P_{n,M}\binom{R_1}{R_2} &= \sum_{j = 0}^{\left[\frac{M-n}{2}\right]} \tilde A_{n + 2j,n}\Pi_{n+2j}\binom{R_1}{R_2} + \tilde B_{n+2j,n}\hat \Pi_{n+2j}\binom{R_1}{R_2}.
\end{align*}
Since $R_1(y,s)$ and $R_2(y,s)$ are even functions in $y$, we deduce that 
 $$\Pi_j\binom{R_1(y,s)}{R_2(y,s)} = \hat \Pi_j\binom{R_1(y,s)}{R_2(y,s)}  = 0 \quad \text{if $j$ is odd},$$
which follows \eqref{est:PnMR1R1nodd}. Now when $n \geq 4$ is even, we use \eqref{eq:Riexpand} with $m = \left[\frac{n}{2}\right]$ and write for $i = 1,2$,
$$R_i(y,s) = \tilde{R}_{i,\frac{n}{2}}(y,s) + \Oc\left(\frac{1 + |y|^n}{s^{\frac{n}{2} + 1}}\right), \quad \text{for all}\; |y| < \sqrt s,\; s\geq 1,$$
where $\tilde R_i$ is polynomial in $y$ of degree less then $n - 1$. It is enough to estimate $\Pi_{k}\binom{R_1}{R_2}$ and $\hat \Pi_k \binom{R_1}{R_2}$ with $n \leq k = n + 2j \leq M$ since the same bound holds for $P_{n,M}$ and $\tilde P_{n,M}$. We only estimate $\Pi_{k}\binom{R_1}{R_2}$ because the same  proof holds for $\hat \Pi_k \binom{R_1}{R_2}$. From definition \eqref{def:Qn}, we write
\begin{align*}
\|h_k\|^2_{\rho_1}|\Pi_{k}\binom{R_1}{R_2}|& = \left|\int_{\Rb} R_1h_k \rho_1 dy\right|\\
&\leq \left| \int_{|y| \leq \sqrt s}\tilde{R}_{1, \frac{n}{2}}h_k \rho_1 dy\right| + \Oc\left(\frac{1}{s^{\frac{n}{2} + 1}}\int_{|y| < \sqrt s}(1 + |y|^n)|h_k|\rho_1dy  \right)\\
& + \left|\int_{|y| > \sqrt s}R_1 h_k \rho_1dy\right|\leq 0 + \frac{C}{s^{\frac n2 +1}} + Ce^{-cs},
\end{align*}
where we used the fact that $\text{deg}(\tilde{R}_{1, \frac n2}) \leq n - 1 < k$ and the orthogonality \eqref{eq:orthohnhm} resulting in $\int_{\Rb}\tilde{R}_{1, \frac{n}{2}} h_k \rho_1 dy = 0$, and that the integral over the domain $|y| > \sqrt s$ is controlled by $Ce^{-cs}$. We have proved \eqref{est:PnMR1R1n4}. When $n = 0$ and $n = 2$, estimate \eqref{est:PnMR1R1n02} directly follows from \eqref{eq:Riexpand} with $m = 1$, that is
$$|R_i(y,s)| \leq \frac{C(1 + |y|^2)}{s^2}.$$
It remains to prove \eqref{est:PnMR1R1n2}. To this end, let us write from \eqref{eq:Riexpand}
$$R_i(y,s) = \frac{1}{s^2}R_{i,1}(y) + \Oc\left(\frac{1 + |y|^4}{s^3}\right),$$
where $R_{1,1}$ and $R_{2,1}$ are given by \eqref{eq:R1exps3} and \eqref{eq:R2exps3}. Estimate \eqref{est:PnMR1R1n2} will follow if we show that 
$$P_{2,M}\binom{R_{1,1}(y)}{R_{2,1}(y)} = 0.$$
Using Lemma \ref{lemm:DefProjection} and the orthogonality \eqref{eq:orthohnhm} (note that $\text{deg}(R_{i,1}) = 2$, $i = 1,2$), we obtain
\begin{align*}
P_{2,M}\binom{R_{1,1}(y)}{R_{2,1}(y)} &= \frac{1}{2pq+p+q} \left(\frac{q}{\Gamma}\Pi_2\binom{R_{1,1}(y)}{R_{2,1}(y)} + \frac{p}{\gamma} \hat\Pi_2\binom{R_{1,1}(y)}{R_{2,1}(y)}\right)\\
& = \frac{1}{2pq+p+q} \left(\frac{q}{\Gamma}\|h_2\|^{-2}_{\rho_1} \int_{\Rb}R_{1,1}h_2 \rho_1 dy + \frac{p}{\gamma} \|\hat h_2\|^{-2}_{\rho_\mu}\int_{\Rb}R_{2,1}\hat h_2 \rho_\mu dy \right)\\
&= \frac{bq(p+1)}{(pq-1)(2pq + p + q)} \left(-1 + \frac{6bp(q+1)}{pq-1} - \frac{2bp(q+1)(p-1)(q+\mu)}{(pq-1)^2} \right)\\
& + \frac{bp(q+1)}{(pq-1)(2pq + p +q)} \left(-1 + \frac{6b\mu q(p+1)}{pq-1} - \frac{2b q(p+1)(q-1)(p\mu+1)}{(pq-1)^2} \right)\\
& = 0,
\end{align*}
after a straightforward simplification. This concludes the proof of Lemma \ref{lemm:Pro5thterm}.
\end{proof}

\paragraph{- Step 2: Proof of items $(i)$, $(ii)$ and $(iii)$ of Proposition \ref{prop:dyn}.}
In Step 1, we have obtained all the contribution in the projections $P_{n,M}$ and $\tilde P_{n,M}$ for the terms appearing in system \eqref{eq:LamUp}. More precisely, taking the projection of \eqref{eq:LamUp} on $\binom{f_n}{g_n}$ and $\binom{\tilde f_n}{\tilde g_n}$ for $n \leq M$, we see that for all $s \in [\tau, \tau_1]$:\\
- if $n = 0$ and $n = 1$, then
$$\left|\theta_n'(s) - \left(1 - \frac n2 \right)\theta_n(s) \right| \leq \frac{C}{s^2},$$
which is the conclusion of part $(i)$ of Proposition \ref{prop:dyn},\\
- if $n = 2$, then 
$$\left|\theta_2'(s) - \frac{2}{s}\theta_2(s) \right| \leq \frac{CA^3}{s^3},$$
which is the conclusion of part $(ii)$ of Proposition \ref{prop:dyn},\\
- if $3 \leq n \leq M$, then 
$$\left|\theta_n'(s) + \left(\frac{n - 2}{2}\right)\theta_n(s) \right| \leq \frac{CA^{n-1}}{s^{\frac{n+1}{2}}},$$
$$\left|\tilde \theta_n'(s) + \left(\frac n2 + \frac{(p+1)(q+1)}{pq-1}\right)\tilde \theta_n(s) \right| \leq \frac{CA^{n-1}}{s^{\frac{n+1}{2}}},$$
and $n = 0, 1, 2$,
$$\left|\tilde \theta_n'(s) + \left(\frac n2 + \frac{(p+1)(q+1)}{pq-1}\right)\tilde \theta_n(s) \right| \leq \frac{C}{s^2}.$$
Integrating these differential equations between $\tau$ and $s$ gives the conclusion of part $(iii)$ of Proposition \ref{prop:dyn}.

\subsubsection{The infinite-dimensional part.} \label{sec:negetivepart}
We prove item $(iv)$ of Proposition \ref{prop:dyn} in this part. We proceed in two steps:\\

\noindent - Firstly, we project \eqref{eq:LamUp} using the projector $\Pi_{-,M}$. Recall that $\Pi_{-,M}$ is the projector on the subspace of the spectrum of $\Hc$ which is smaller than $\frac{1-M}{2}$. Unlike as in the previous part where we used the spectrum of $\Hc + \Mc$.\\
\noindent - Secondly, from the main contribution in the projection $\Pi_{-,M}$ of the all terms appearing in \eqref{eq:LamUp}, we write  a system satisfied by $\binom{\Lambda_-}{\Upsilon_-}$, then use a Gronwall's inequality to get the conclusion.\\

\paragraph{Step 1: Projection $\Pi_{-,M}$ of the all terms appearing in \eqref{eq:LamUp}.} In this step, we will find the main contribution in the projection $\Pi_{-,M}$ of various terms appearing in \eqref{eq:LamUp}.

\subparagraph{First term: $\partial_s\binom{\Lambda}{\Upsilon}$.} From \eqref{decomoposeLamUp} and \eqref{def:Qn} and \eqref{def:Qnhat}, its projection is 
\begin{equation*}
\Pi_{-,M}\left[\partial_s\binom{\Lambda}{\Upsilon}\right] = \partial_s\binom{\Lambda_-}{\Upsilon_-}.
\end{equation*}
\subparagraph{Second term: $(\Hc + \Mc)\binom{\Lambda}{\Upsilon}$.} We have the following:
\begin{equation*}
\Pi_{-,M}\left[(\Hc + \Mc)\binom{\Lambda}{\Upsilon}\right] = \Hc\binom{\Lambda_-}{\Upsilon_-} + \Mc\binom{\Lambda_-}{\Upsilon_-},
\end{equation*}
where we used the fact that $\Pi_{-,M}\binom{f_n}{g_n} + \Pi_{-,M}\binom{\tilde f_n}{\tilde g_n} = 0$ for all $n \leq M$.

\subparagraph{Third term: $V\binom{\Lambda}{\Upsilon} = \binom{V_1 \Upsilon}{V_2 \Lambda}$.} We claim the following:
\begin{lemma}[Projection of $\binom{V_1 \Upsilon}{V_2 \Lambda}$ using $\Pi_{-,M}$] \label{lemm:Pineg3rdterm}$\;$\\
$(i)\;$ For all $s \geq 1$, we have 
\begin{align*}
\left\|\frac{\Pi_{-,M}(V_1 \Upsilon)}{1 + |y|^{M+1}}\right\|_{L^\infty(\Rb)} \leq \left(\|V_1\|_{L^\infty(\Rb)} + \frac C s \right)&\left\|\frac{\Upsilon_-}{1 + |y|^{M+1}}\right\|_{L^\infty(\Rb)}\\
& + \sum_{n = 0}^M \frac{C}{s^\frac{M+1 - n}{2}}(|\theta_n(s)| + |\tilde \theta_n(s)|),\\
\left\|\frac{\Pi_{-,M}(V_2 \Lambda)}{1 + |y|^{M+1}}\right\|_{L^\infty(\Rb)} \leq \left(\|V_2\|_{L^\infty(\Rb)} + \frac Cs \right)&\left\|\frac{\Lambda-}{1 + |y|^{M+1}}\right\|_{L^\infty(\Rb)}\\
&+ \sum_{n = 0}^M \frac{C}{s^\frac{M+1 - n}{2}}(|\theta_n(s)| + |\tilde \theta_n(s)|).
\end{align*}
$(ii)\;$ For all $A \geq 1$, there exists $s_8(A) \geq 1$ such that for all $s \geq s_8(A)$, if $\binom{\Lambda(s)}{\Upsilon(s)} \in \Vc_A(s)$, then
\begin{align*}
\left\|\frac{\Pi_{-,M}(V_1 \Upsilon)}{1 + |y|^{M+1}}\right\|_{L^\infty(\Rb)} &\leq \|V_1\|_{L^\infty(\Rb)} \left\|\frac{\Upsilon_-}{1 + |y|^{M+1}}\right\|_{L^\infty(\Rb)} + \frac{CA^M}{s^\frac{M+2}{2}},\\
\left\|\frac{\Pi_{-,M}(V_2 \Lambda)}{1 + |y|^{M+1}}\right\|_{L^\infty(\Rb)} &\leq \|V_2\|_{L^\infty(\Rb)} \left\|\frac{\Lambda_-}{1 + |y|^{M+1}}\right\|_{L^\infty(\Rb)} + \frac{CA^M}{s^\frac{M+2}{2}}.
\end{align*}

\end{lemma}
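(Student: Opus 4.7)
The plan is to use the decomposition \eqref{decomoposeLamUp} to write
\begin{equation*}
\Pi_{-,M}(V_1\Upsilon) = \Pi_{-,M}(V_1\Upsilon_-) + \sum_{n\leq M}\theta_n\,\Pi_{-,M}(V_1 g_n) + \sum_{n\leq M}\tilde\theta_n\,\Pi_{-,M}(V_1 \tilde g_n),
\end{equation*}
and bound each term separately; the bound on $V_2\Lambda$ is completely symmetric.

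For the ``negative'' piece, I will split $\Pi_{-,M}(V_1\Upsilon_-) = V_1\Upsilon_- - \Pi_{+,M}(V_1\Upsilon_-)$. The first summand, divided by $(1+|y|^{M+1})$, is trivially bounded by $\|V_1\|_{L^\infty}\|\Upsilon_-/(1+|y|^{M+1})\|_{L^\infty}$. For $\Pi_{+,M}(V_1\Upsilon_-)$, each Hermite coefficient $\langle V_1\Upsilon_-, h_n\rangle_{\rho_1}$ with $n\leq M$ is controlled by $(C/s)\|\Upsilon_-/(1+|y|^{M+1})\|_{L^\infty}$, using the bound $|V_1|\leq C(1+y^2)/s$ from Lemma \ref{lemm:estVys}(i) combined with the Gaussian weight; summing over $n\leq M$ (each $h_n$ of degree $\leq M$) and dividing by $(1+|y|^{M+1})$ produces the $C/s$ contribution.

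For the polynomial contributions $\Pi_{-,M}(V_1 g_n)$ (and similarly $\Pi_{-,M}(V_1\tilde g_n)$), I will separate the inner region $\{|y|<\sqrt s\}$ from the outer region. Inside, I apply the finite expansion \eqref{est:Viorderk} of $V_1$ up to an order $k$ just above $(M-n)/2$: since $\Pi_{-,M}$ annihilates polynomials of degree $\leq M$, every polynomial term $s^{-j}W_{1,j}g_n$ with $2j+n\leq M$ is killed; the remaining terms carry a factor $s^{-j}$ with $2j+n\geq M+1$, and the inequality $|y|^{2j+n-M-1}\leq s^{j-(M+1-n)/2}$ on the inner region gives, after dividing by $(1+|y|^{M+1})$, the bound $Cs^{-(M+1-n)/2}$; the remainder $\tilde W_{1,k}g_n$ is controlled by the same rate. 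Outside, the elementary bound $\|V_1\|_{L^\infty}<\infty$ together with $|y|\geq\sqrt s$ directly yields $|V_1 g_n|/(1+|y|^{M+1})\leq C/s^{(M+1-n)/2}$, while $\Pi_{+,M}(V_1 g_n)$ is handled by the same argument applied to each Hermite coefficient: the orthogonality $\int W_{1,j}g_n h_m\rho_1 dy = 0$ whenever $2j+n<m$ forces the coefficient on $h_m$ to decay like $s^{-(m-n)/2}$, so that reassembling and dividing by $(1+|y|^{M+1})$ again gives the rate $Cs^{-(M+1-n)/2}$.

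Part (ii) then follows directly from part (i) by inserting the $\Vc_A(s)$ bounds on $|\theta_n|$, $|\tilde\theta_n|$ and on $\|\Upsilon_-/(1+|y|^{M+1})\|_{L^\infty}$ from Definition \ref{def:VA}; the worst contribution in the sum comes from $n$ near $M$ and produces the advertised $CA^M/s^{(M+2)/2}$, which dominates every other term for $s$ large enough. The main obstacle will be the careful bookkeeping in $\Pi_{-,M}(V_1 g_n)$: the expansion order $k$ has to be tuned precisely to $(M+1-n)/2$ so that the surviving polynomial tails grow no faster than $|y|^{M+1}$ on the inner region, and the inner/outer pieces must be glued so that no spurious $\log s$ or extraneous $|y|$-growth remains after division by $(1+|y|^{M+1})$.
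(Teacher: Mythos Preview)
Your proposal is correct and follows essentially the same route as the paper: decompose $\Upsilon=\Upsilon_++\Upsilon_-$, write $\Pi_{-,M}(V_1\Upsilon_-)=V_1\Upsilon_--\Pi_{+,M}(V_1\Upsilon_-)$, and for each $\Pi_{-,M}(V_1 g_n)$ use the polynomial expansion of $V_1$ so that the low-degree pieces are annihilated by $\Pi_{-,M}$ and only a remainder of size $Cs^{-(M+1-n)/2}$ survives. The only difference is cosmetic: the paper chooses $k$ so that $2k+n\leq M$ (hence \emph{every} polynomial term $s^{-j}W_{1,j}g_n$ is killed) and then invokes Lemma~\ref{lemm:prokernalL}(iv) on the remainder $\tilde W_{1,k}g_n$ globally, which replaces your explicit inner/outer bookkeeping in one stroke.
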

\begin{proof} We only deal with $V_1\Upsilon$ because the proof for $V_2\Lambda$ is similar.  Let us write $\Upsilon = \Upsilon_+ + \Upsilon_-$, where $\Upsilon_+ = \Pi_{+,M}\Upsilon = (\textbf{Id} - \Pi_{-,M})\Upsilon$ and
$$\Pi_{-,M}(V_1 \Upsilon) =   V_1 \Upsilon_- - \Pi_{+,M}(V_1\Upsilon_-) + \Pi_{-,M}(V_1 \Upsilon_+).$$
The first term is obviously bounded by
$$
\left\|\frac{V_1 \Upsilon_-}{1 + |y|^{M+1}}\right\|_{L^\infty(\Rb)} \leq \|V_1\|_{L^\infty(\Rb)}\left\|\frac{\Upsilon_-}{1 + |y|^{M+1}}\right\|_{L^\infty(\Rb)}.
$$
Note that if $|f(y)| \leq \sigma (1 + |y|^k)$ for $k \in \mathbb{N}$, then $|\Pi_{+,M}f(y)| \leq C\sigma$.  Using this property and \eqref{est:Viorder1},  we obtain the bound for the second term   
$$\left\|\frac{\Pi_{+,M}(V_1 \Upsilon_-)}{1 + |y|^{M+1}}\right\|_{L^\infty(\Rb)} \leq \frac Cs \left\|\frac{\Upsilon_-}{1 + |y|^{M+1}}\right\|_{L^\infty(\Rb)}.$$
For the last term, let us write  from \eqref{eq:LplusUplusexpand},
$$\Pi_{-,M}(V_1 \Upsilon_+) = \sum_{n = 0}^M \Pi_{-,M}\big[V_1(\theta_n g_n + \tilde \theta_n \tilde{g}_n) \big].$$
If $M - n$ is odd, we use \eqref{est:Viorderk} with $k = \frac{M - n - 1}{2}$, hence, 
\begin{align*}
\Pi_{-,M}\big[V_1(\theta_n g_n + \tilde \theta_n \tilde{g}_n) \big] & = \sum_{j = 1}^k\frac{1}{s^j}\Big[\Pi_{-,M}\big[W_{1,j}(\theta_n g_n + \tilde \theta_n \tilde{g}_n) \big]\Big]\\
&  + \Pi_{-,M}\big[\tilde{W}_{1,k}(\theta_n g_n + \tilde \theta_n \tilde{g}_n) \big] = I_1 + I_2.
\end{align*}
Since $\text{deg}(g_n) = \text{deg}(\tilde g_n) = n$ and $n + 2k = M - 1 < M$, we deduce that $I_1 = 0$. Moreover, since $|\tilde{W}_{1,k}(y,s)| \leq \frac{C}{s^{k+1}}(1 + |y|^{2k + 2})$, we deduce from $(iv)$ of Lemma \ref{lemm:prokernalL} that 
$$\left\|\frac{\Pi_{-,M}\big[V_1(\theta_n g_n + \tilde \theta_n \tilde{g}_n) \big]}{1 + |y|^{M+1}}\right\|_{L^\infty(\Rb)} \leq \frac{C(|\theta_n(s)| + |\tilde\theta_n(s)|)}{s^\frac{M - n + 1}{2}}.$$
Similarly, when $M - n$ is even, we use \eqref{est:Viorderk} with $k = \frac{M - n}{2}$ and argue as above to obtain the same estimate. This concludes the proof of part $(i)$. Part $(ii)$ simply follows from part $(i)$ and Definition \ref{def:VA} of $\Vc_A(s)$. This finishes the proof of Lemma \ref{lemm:Pineg3rdterm}.
\end{proof}

\subparagraph{Fourth term: $\binom{F_1}{F_2}$.} We claim the following:
\begin{lemma}[Projection of $\binom{F_1}{F_2}$ using $\Pi_{-,M}$] \label{lemm:Pineg4thterm} Let $\binom{\Lambda(s)}{\Upsilon(s)} \in \Vc_A(s)$. Then for all $A \geq 1$ and $K \geq 1$ introduced in \eqref{def:chi}, there exists $s_9(A,K) \geq 1$ such that for all $s \geq s_9(A,K)$,  the functions $F_1(\Upsilon,y,s)$ and $F_2(\Lambda,y,s)$ defined by \eqref{def:Bys} satisfy: 
\begin{equation*}
\left\|\frac{\Pi_{-,M}[F_1(\Upsilon, y,s)]}{1 + |y|^{M+1}} \right\|_{L^\infty(\Rb)} \leq \frac{CA^{(M+2)^2}}{s^\frac{M + 1 +\bar p}{2}},
\end{equation*}
and 
\begin{equation*}
\left\|\frac{\Pi_{-,M}[F_2(\Lambda, y,s)]}{1 + |y|^{M+1}} \right\|_{L^\infty(\Rb)} \leq \frac{CA^{(M+2)^2}}{s^\frac{M + 1 + \bar q}{2}}, 
\end{equation*}
where $\bar p = \min\{2,p\}$ and $\bar q = \min\{2,q\}$.
\end{lemma}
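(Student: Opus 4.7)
The plan is to adapt the strategy used for Lemma 5.6, with the extra work that $F_1, F_2$ are genuinely nonlinear in $\Lambda, \Upsilon$. Since the two estimates are symmetric, I focus on $F_1(\Upsilon,y,s)$. The main tools will be the Taylor decomposition of Lemma \ref{lemm:decomB}, the pointwise structure of $\Pi_{-,M}$ (in particular $\Pi_{-,M} = \textbf{Id} - \Pi_{+,M}$ with $|\Pi_{+,M} f| \leq C\sigma$ whenever $|f(y)| \leq \sigma(1+|y|^k)$, as exploited in the proof of Lemma \ref{lemm:Pineg3rdterm}), and the $\Vc_A$-bounds together with Proposition \ref{prop:properVA}.

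First I will split $\Rb = \{|y|\leq \sqrt{s}\}\cup\{|y|>\sqrt{s}\}$ and treat the outer region by brute force: using the global bound $|F_1(\Upsilon,y,s)| \leq C|\Upsilon|^{\bar p}$ from \eqref{eq:F1pF2q} together with $\|\Upsilon(s)\|_{L^\infty(\Rb)} \leq C A^{M+2}/\sqrt{s}$ from Proposition \ref{prop:properVA}$(ii)$, one gets $|F_1|\leq C A^{(M+2)\bar p}/s^{\bar p/2}$, and since $(1+|y|^{M+1})^{-1}\leq s^{-(M+1)/2}$ in this region, the outer contribution to $\|F_1/(1+|y|^{M+1})\|_{L^\infty}$ is already of the desired order $CA^{(M+2)^2}s^{-(M+1+\bar p)/2}$ (note $(M+2)\bar p\leq (M+2)^2$). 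The contribution to $\Pi_{+,M}$ coming from $|y|>\sqrt{s}$ is then controlled by the Gaussian decay of $\rho_1$, giving an exponentially small term.

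For the inner region $|y|<\sqrt{s}$, I use the decomposition of Lemma \ref{lemm:decomB} to write $F_1^{\text{in}} = \sum_{k=2}^{M+1}\Upsilon^k \sum_{l=0}^M s^{-l}[F^l_{1,k}(y/\sqrt s) + \tilde F^l_{1,k}(y,s)] + R^{\text{in}}$ with $|R^{\text{in}}|\leq C|\Upsilon|^{M+2}+Cs^{-(M+1)}$, then split $\Upsilon=\Upsilon_++\Upsilon_-$ and expand $\Upsilon^k$ binomially. For the pure-$\Upsilon_+^k$ terms multiplied by the main polynomial $F^l_{1,k}(y/\sqrt s)$, the product is a polynomial in $y$ of degree $\leq (k+1)M$ with coefficients controlled through $|\theta_n|,|\tilde\theta_n|$ via Definition \ref{def:VA}; extracting the part of degree $\leq M$ annihilated by $\Pi_{-,M}$, what remains has a weight-bounded $L^\infty$-norm of the right size. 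The mixed terms carrying at least one factor of $\Upsilon_-$ are handled by the weighted $L^\infty$ bound $|\Upsilon_-(y,s)|\leq A^{M+1}(1+|y|^{M+1})/s^{(M+2)/2}$ together with the uniform bounds on $\Upsilon_+$ from Proposition \ref{prop:properVA}$(i)$, while the contributions of $\tilde F^l_{1,k}(y,s)$ and of $R^{\text{in}}$ are absorbed into the final bound using the quantitative smallness statements in Lemmas \ref{lemm:decomB} and \ref{lemm:Pineg3rdterm} and Proposition \ref{prop:properVA}$(iii)$ (applied with $k\geq 2$, which is what produces the gain $s^{-\bar p/2}$ over the linear estimate).

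The main obstacle will be bookkeeping: the $A^{(M+2)^2}$ factor emerges from expanding $\Upsilon^k$ up to $k=M+1$ using $\|\Upsilon\|_{L^\infty}\leq CA^{M+2}/\sqrt{s}$, and one must verify that every mixed $\Upsilon_+^{k-j}\Upsilon_-^j$ contribution (with $j\geq 1$) as well as every remainder term is at least as small as the crude quadratic bound $A^{(M+2)\bar p}/s^{(M+1+\bar p)/2}$. The essential gain of $s^{-(M+1)/2}$ over the naive estimate comes precisely from combining the division by $1+|y|^{M+1}$ with the polynomial weight $(1+|y|^{M+1})$ in the control of $\Upsilon_-$ (inner region) or with the pointwise lower bound $|y|^{M+1}\geq s^{(M+1)/2}$ (outer region); checking that this gain survives the projection $\Pi_{+,M}$ is the delicate point, and will be ensured by the Gaussian-weighted integral representation of $\Pi_{+,M}$ exactly as in the proof of Lemma \ref{lemm:Pineg3rdterm}.
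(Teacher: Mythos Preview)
Your proposal is correct and follows essentially the same approach as the paper: split into the outer region $|y|\geq\sqrt{s}$ (handled via the crude bound \eqref{eq:F1pF2q} and Proposition~\ref{prop:properVA}$(ii)$) and the inner region $|y|<\sqrt{s}$ (handled via the Taylor expansion of Lemma~\ref{lemm:decomB}, the splitting $\Upsilon=\Upsilon_++\Upsilon_-$, and extraction of the degree-$\leq M$ polynomial part annihilated by $\Pi_{-,M}$). The paper packages this slightly differently, reducing everything to the single pointwise estimate $|F_1-F_{1,M}|\leq CA^{(M+2)^2}s^{-(M+1+\bar p)/2}(1+|y|^{M+1})$ for the explicit polynomial $F_{1,M}=\Pi_{+,M}\big[\sum_{k}\Upsilon^k\sum_{l}s^{-l}F^l_{1,k}(y/\sqrt{s})\big]$ and invoking Lemma~\ref{lemm:Pro4thterm} to control the coefficients of $F_{1,M}$, but the ingredients and the bookkeeping are the same as in your outline.
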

\begin{proof} We only deal with $F_1(\Upsilon, y,s)$ because the similar estimate holds for $F_2(\Lambda, y,s)$. Since the proof is similar to the proof of Lemma \ref{lemm:Pineg3rdterm}, we just give the key estimate. We first notice that for all polynomial functions $f(y)$ of degree $M$, we have $\Pi_{-,M}f(y) = 0$. Hence, the conclusion follows once we show that there exists a polynomial function $F_{1,M}$ of degree $M$ in $y$ such that  for all $y \in \Rb$ and $s \geq 1$,
\begin{equation}\label{est:F1F1M}
|F_1 - F_{1,M}| \leq \frac{CA^{(M+2)^2}}{s^\frac{M + 1 + \bar p}{2}}(1 + |y|^{M+1}),
\end{equation}
where $\bar p = \min\{2,p\}$. In particular, we take
$$F_{1,M} = \Pi_{+,M}\left[ \sum_{k = 2}^{M+1} \Upsilon^k \sum_{l = 0}^M \frac{1}{s^l}F_{1,k}^l\left( \frac{y}{\sqrt s}\right)\right].$$
To prove \eqref{est:F1F1M}, we recall from Lemma \ref{lemm:decomB} that 
$$\left|F_1 - \sum_{k = 2}^{M+1} \Upsilon^k \sum_{l = 0}^M \frac{1}{s^l}\left[F_{1,k}^l\left( \frac{y}{\sqrt s}\right) + \tilde{F}_{1,k}^l(y,s) \right]\right| \leq C|\Upsilon|^{M+2} + \frac{C}{s^{M+1}}.$$

We first consider the region $|y| \geq \sqrt s$. From \eqref{eq:F1pF2q} and part $(ii)$ of Proposition \ref{prop:properVA}$(ii)$, we have
$$|F_1| \leq C|\Upsilon|^{\bar{p}} \leq C\left(\frac{A^{M+2}}{\sqrt s}\right)^{\bar p} \frac{1}{s^\frac{M+1}{2}}(1 + |y|^{M+1}).$$
From Lemma \ref{lemm:Pro4thterm}, we know that for all $n \leq M$,
$$\left|P_{n,M}\binom{F_1}{F_2} \right| + \left|\tilde P_{n,M}\binom{F_1}{F_2} \right| \leq \frac{CA^n}{s^\frac{n + 2}{2}}.$$
In the region $|y| \leq \sqrt s$, we use the same argument as in the proof of Lemma \ref{lemm:Pro4thterm} to deduce that the coefficient of degree $k \geq M + 1$ of the polynomial 
$$\sum_{k = 2}^{M+1} \Upsilon^k \sum_{l = 0}^M \frac{1}{s^l}F_{1,k}^l\left( \frac{y}{\sqrt s}\right) - F_{1,M}$$
is controlled by $\frac{CA^k}{s^\frac{k + 2}{2}}$, hence, 
$$\left|\sum_{k = 2}^{M+1} \Upsilon^k \sum_{l = 0}^M \frac{1}{s^l}F_{1,k}^l\left( \frac{y}{\sqrt s}\right) - F_{1,M}\right| \leq \frac{CA^{2M + 2}}{s^\frac{M+3}{2}}(1 + |y|^{M+1}).$$
Using part $(i)$ of Proposition \ref{prop:properVA} yields
$$\left|\sum_{k = 2}^{M+1} \Upsilon^k \sum_{l = 0}^M \frac{1}{s^l}\tilde F_{1,k}^l(y,s)\right| \leq \frac{CA^{2M + 2}}{s^\frac{M+3}{2}}(1 + |y|^{M+1}).$$
To control the term $|\Upsilon|^{M+2}$, we use parts $i$ and $(iii)$ of Proposition \ref{prop:properVA} to get 
$$|\Upsilon|^{M+2} \leq C\left(\frac{A^{M+1}}{\sqrt s} \right)^{M+1} \frac{A^{M+1}\log s}{s^2}(1 + |y|^{M+1}).$$
A collection of all the above estimates yields \eqref{est:F1F1M}. The conclusion of Lemma \ref{lemm:Pineg4thterm} follows from \eqref{est:F1F1M} by using the same argument as in the proof of Lemma \ref{lemm:Pineg3rdterm}.

\end{proof}

\subparagraph{Fifth term: $\binom{R_1}{R_2}$.} From Lemma \ref{lemm:expandR1R2}, we have the following:
\begin{lemma}[Projection of $\binom{R_1}{R_2}$ using $\Pi_{-,M}$.] \label{lemm:Pineg5th} The functions $R_1(y,s)$ and $R_2(y,s)$ defined by \eqref{def:Rys} satisfy 
\begin{equation*}
\left\|\frac{\Pi_{-,M}\big[R_i(y,s)\big]}{1 + |y|^{M+1}} \right\|_{L^\infty(\Rb)} \leq \frac{C}{s^\frac{M+3}{2}}.
\end{equation*}

\end{lemma}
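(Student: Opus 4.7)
My plan is to mimic the strategy of Lemma \ref{lemm:Pineg3rdterm}: exploit the fact that $\Pi_{-,M}$ annihilates every polynomial of degree $\leq M$ and then control the remainder. Concretely, I would apply Lemma \ref{lemm:expandR1R2} with the choice $m = M/2 + 1$ (legitimate since $M$ is even by \eqref{eq:Mfixed}) to decompose
$$R_i(y,s) = P_i(y,s) + E_i(y,s), \qquad P_i(y,s) := \sum_{k=1}^{M/2}\frac{R_{i,k}(y)}{s^{k+1}},$$
where $P_i(\cdot,s)$ is a polynomial in $y$ of degree at most $M$, and
$$|E_i(y,s)| \leq \frac{C(1+|y|^{M+2})}{s^{M/2+2}} \quad \text{for } |y| < \sqrt{s}.$$
Since $\Pi_{-,M}P_i \equiv 0$, it suffices to estimate $\Pi_{-,M}E_i$.

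The second step is to upgrade the inner bound on $E_i$ to the global pointwise bound
$$|E_i(y,s)| \leq \frac{C(1+|y|^{M+1})}{s^{(M+3)/2}}, \qquad y \in \Rb,\; s \geq 1.$$
In $|y| < \sqrt{s}$ this is immediate from $|y|^{M+2} \leq \sqrt{s}\,(1+|y|^{M+1})$. In the outer region $|y| \geq \sqrt{s}$, I would combine the uniform bound $\|R_i(s)\|_{L^\infty} \leq C/s$ from \eqref{eq:boundR1R2} with the elementary estimate $|P_i(y,s)| \leq C\sum_{k=1}^{M/2}(1+|y|^{2k})/s^{k+1}$. After dividing by $1+|y|^{M+1}$ and using $|y| \geq \sqrt{s}$, each term of $|P_i|/(1+|y|^{M+1})$ is at most $1/(s^{(M+1-2k)/2}\cdot s^{k+1}) = s^{-(M+3)/2}$, and similarly $|R_i|/(1+|y|^{M+1}) \leq C/(s \cdot s^{(M+1)/2}) = Cs^{-(M+3)/2}$.

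To conclude, since $\Pi_{+,M}E_i$ is a polynomial of degree $\leq M$ whose coefficients are $L^2_{\rho_\eta}$ inner products of $E_i$ with Hermite polynomials, the global pointwise bound on $E_i$ together with the Gaussian integrability yields $|\Pi_{+,M}E_i(y)| \leq Cs^{-(M+3)/2}(1+|y|^M)$, and therefore
$$|\Pi_{-,M}E_i(y)| = |E_i(y) - \Pi_{+,M}E_i(y)| \leq Cs^{-(M+3)/2}(1+|y|^{M+1}),$$
which is the claim after dividing by $1+|y|^{M+1}$. The only mildly delicate point — and I expect no other substantial obstacle — is matching the decay rate in the outer region to that in the inner one; this is what pins down the choice $m = M/2 + 1$ and forces essential use of the $O(1/s)$ uniform decay of $R_i$ itself rather than its mere boundedness.
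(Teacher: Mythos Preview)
Your approach is correct and essentially the same as the paper's: both apply Lemma \ref{lemm:expandR1R2} with $m = \frac{M+2}{2}$, kill the polynomial part $P_i$ via $\Pi_{-,M}$, and bound the remainder. The paper's proof is terser---it converts the inner bound $C(1+|y|^{M+2})/s^{(M+4)/2} \leq C(1+|y|^{M+1})/s^{(M+3)/2}$ for $|y|<\sqrt{s}$ and then simply invokes part $(iv)$ of Lemma \ref{lemm:prokernalL}, leaving the outer region $|y|\geq\sqrt{s}$ implicit---whereas you explicitly extend the bound on $E_i$ to all of $\Rb$ using \eqref{eq:boundR1R2} and then reprove inline what Lemma \ref{lemm:prokernalL}$(iv)$ encapsulates; your version is thus a bit more complete but otherwise identical in spirit.
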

\begin{proof} Applying Lemma \ref{lemm:expandR1R2} with $m = \frac{M+2}{2}$, we write for all $|y| \leq \sqrt s$ and $s \geq 1$,
$$\left|R_i(y,s)- \sum_{k = 1}^{M/2}\frac{1}{s^{k + 1}}R_{i,k}(y) \right| \leq \frac{C(1 + |y|^{M + 2})}{s^\frac{M+4}{2}} \leq  \frac{C(1 + |y|^{M + 1})}{s^\frac{M+3}{2}}.$$
Since $\text{deg}(R_{i,k}) = 2k \leq M$, we have $\Pi_{-,M}R_{i,k} = 0$. The conclusion simply follows by using $(iv)$ of Lemma \ref{lemm:prokernalL}. This ends the proof of Lemma \eqref{lemm:Pineg5th}.
\end{proof}

We are ready to prove part $(iv)$ of Proposition \ref{prop:dyn}. 
\paragraph{Step 2: Proof of item $(iv)$ of Proposition \ref{prop:dyn}.} Applying the projection $\Pi_{-,M}$ to system \eqref{eq:LamUp} and using the various estimates given in the first step, we see that $\Lambda_-$ and $\Upsilon_-$ satisfy the following system:
\begin{align*}
\partial_s \Lambda_- = \Lc_1 \Lambda_- - \frac{p+1}{pq-1}\Lambda_- + p\gamma^{p-1}\Upsilon_- + G_{1,-}(y,s)\\
\partial_s \Upsilon_- = \Lc_\mu \Upsilon_- - \frac{q+1}{pq-1}\Upsilon_- + q\Gamma^{p-1}\Lambda_-  + G_{2,-}(y,s),
\end{align*}
where $G_{1,-}$ and $G_{2,-}$ satisfy 
$$\left\|\frac{G_{1,-}(y,s)}{1 + |y|^{M+1}} \right\|_{L^\infty(\Rb)} \leq \|V_1(s)\|_{L^\infty(\Rb)}\left\|\frac{\Upsilon_-}{1 + |y|^{M+1}} \right\|_{L^\infty(\Rb)}  + \frac{CA^M}{s^{\frac{M+2}{2}}} + \frac{CA^{(M+2)^2}}{s^\frac{M+1 + \bar p}{2}},$$
and 
$$\left\|\frac{G_{2,-}(y,s)}{1 + |y|^{M+1}} \right\|_{L^\infty(\Rb)} \leq \|V_2(s)\|_{L^\infty(\Rb)}\left\|\frac{\Lambda_-}{1 + |y|^{M+1}} \right\|_{L^\infty(\Rb)}  + \frac{CA^M}{s^{\frac{M+2}{2}}} + \frac{CA^{(M+2)^2}}{s^\frac{M+1 + \bar q}{2}}.$$

Using the semigroup representation of $\Lc_\eta$ with $\eta \int \{1, \mu\}$, we write for all $s \in [\tau, \tau_1]$,
\begin{align*}
\Lambda_-(s) &= e^{(s - \tau)\Lc_1}\Lambda_-(\tau) + \int_{\tau}^s e^{(s - s')\Lc_1}\left(- \frac{p+1}{pq-1}\Lambda_-(s') + p\gamma^{p-1}\Upsilon_-(s') + G_{1,-}(s')\right)ds'\\
\Upsilon_-(s) &= e^{(s - \tau)\Lc_\mu}\Upsilon_-(\tau) + \int_{\tau}^s e^{(s - s')\Lc_\mu}\left(- \frac{q+1}{pq-1}\Upsilon_-(s')+ q\Gamma^{p-1}\Lambda_-(s')  + G_{2,-}(s') \right)ds'.
\end{align*}
Using part $(iii)$ of Lemma \ref{lemm:prokernalL}, we get
\begin{align*}
\left\|\frac{\Lambda_-(s)}{1 + |y|^{M+1}}\right\|_{L^\infty} &\leq e^{-\frac{M+1}{2}(s - \tau)}\left\|\frac{\Lambda_-(\tau)}{1 + |y|^{M+1}}\right\|_{L^\infty}\\
&+ \frac{p+1}{pq-1}\int_{\tau}^s e^{-\frac{M+1}{2}(s - s')}\left\|\frac{\Lambda_-(s')}{1 + |y|^{M+1}}\right\|_{L^\infty}ds'\\
&+ p\gamma^{p-1}\int_{\tau}^s e^{-\frac{M+1}{2}(s - s')}  \left\|\frac{\Upsilon_-(s')}{1 + |y|^{M+1}}\right\|_{L^\infty}ds'\\
& + \int_{\tau}^s e^{-\frac{M+1}{2}(s - s')}\left\|\frac{G_{1,-}(y,s)}{1 + |y|^{M+1}} \right\|_{L^\infty}ds',
\end{align*}
and
\begin{align*}
\left\|\frac{\Upsilon_-(s)}{1 + |y|^{M+1}}\right\|_{L^\infty} &\leq e^{-\frac{M+1}{2}(s - \tau)}\left\|\frac{\Upsilon_-(\tau)}{1 + |y|^{M+1}}\right\|_{L^\infty}\\
&+ \frac{q+1}{pq-1}\int_{\tau}^s e^{-\frac{M+1}{2}(s - s')}\left\|\frac{\Upsilon_-(s')}{1 + |y|^{M+1}}\right\|_{L^\infty}ds'\\
&+ q\Gamma^{q-1}\int_{\tau}^s e^{-\frac{M+1}{2}(s - s')}  \left\|\frac{\Lambda_-(s')}{1 + |y|^{M+1}}\right\|_{L^\infty}ds'\\
& + \int_{\tau}^s e^{-\frac{M+1}{2}(s - s')}\left\|\frac{G_{2,-}(y,s)}{1 + |y|^{M+1}} \right\|_{L^\infty}ds',
\end{align*} 
If we set $\lambda(s) = \left\|\frac{\Lambda_-(s)}{1 + |y|^{M+1}}\right\|_{L^\infty}+\left\|\frac{\Upsilon_-(s)}{1 + |y|^{M+1}}\right\|_{L^\infty}$, then we have 
\begin{align*}
\lambda(s) &\leq e^{-\frac{M+1}{2}(s -\tau)}\lambda(\tau) \\
& +\int_{\tau}^s e^{-\frac{M+1}{2}(s - s')} \left(\|\Mc\|_\infty + \|V_1\|_{L^\infty} + \|V_2\|_{L^\infty} \right)\lambda(s')ds'\\
&+C\int_{\tau}^s e^{-\frac{M+1}{2}(s - s')} \left(\frac{A^{(M+2)^2}}{{s'}^\frac{M+2}{2}} \left({s'}^{\frac{\bar p - 1}{2}} + {s'}^{\frac{\bar q - 1}{2}} \right) + \frac{A^M}{{s'}^\frac{M+2}{2}} \right)ds',
\end{align*}
where $\|\Mc\|_\infty = \max \left\{\frac{p+1}{pq-1} + p\gamma^{p-1}, \frac{q+1}{pq-1} + q\Gamma^{q-1}\right\}$. 

Since we have already fixed $M$ in \eqref{eq:Mfixed} such that 
$$M \geq 4 (\|\Mc\|_{\infty} + 1+ \|V_1\|_{L^\infty} + \|V_2\|_{L^\infty}),$$
and that $A^{(M+2)^2}\left({s'}^{\frac{\bar p - 1}{2}} + {s'}^{\frac{\bar q - 1}{2}} \right) \leq A^M$ for $s'$ large enough, we then apply Lemma \ref{lemm:Gronwall} to deduce that 
$$e^{\frac{M+1}{2}s} \lambda(s) \leq e^{\frac{M+1}{4}(s - \tau)}e^{\frac{M+1}{2}\tau} \lambda(\tau) + Ce^{\frac{M+1}{2}s}\frac{A^M}{s^\frac{M+2}{2}},$$
which concludes the proof of part $(iv)$ of Proposition \ref{prop:dyn}.

\subsubsection{The outer part.}\label{sec:outerpart} 
We prove part $(v)$ of Proposition \ref{prop:dyn} in this subsection. Let us write from \eqref{eq:LamUp} a system satisfied by $\tilde \Lambda_e = (1 - \chi(2y,s))\Lambda$ and $\tilde \Upsilon_e = (1 - \chi(2y,s))\Upsilon$:
\begin{align*}
\partial_s \tilde \Lambda_e &= \Lc_1 \tilde \Lambda_e - \frac{p+1}{pq-1} \tilde \Lambda_e + (1 - \chi(2y,s))\big(\tilde{F}_1(\Upsilon, y,s) + R_1(y,s) \big)\\
& - \Lambda(s)\left(\partial_s \chi(2y,s) + \Delta \chi(2y,s) + \frac{1}{2}y\cdot \nabla \chi(2y,s) \right) + 2\text{div}(\Lambda \nabla \chi(2y,s)),\\
\partial_s \tilde \Upsilon_e &= \Lc_\mu \tilde \Upsilon_e - \frac{q+1}{pq-1} \tilde \Upsilon_e + (1 - \chi(2y,s))\big(\tilde{F}_2(\Lambda, y,s) + R_2(y,s) \big)\\
& - \Upsilon(s)\left(\partial_s \chi(2y,s) + \mu\Delta \chi(2y,s) + \frac{1}{2}y\cdot \nabla \chi(2y,s) \right) + 2\mu\text{div}(\Upsilon \nabla \chi(2y,s)),
\end{align*}
where 
$$\tilde{F}_1(\Upsilon, y,s) = |\Upsilon + \psi|^{p-1}(\Upsilon + \psi) - \psi^p, \quad \tilde F_2(\Lambda, y,s) = |\Lambda + \varphi|^{q - 1}(\Lambda + \varphi) - \varphi^q.$$
Using the  semigroup representation of $\Lc_\eta$ with $\eta \in \{1, \mu\}$ and parts $(i) - (ii)$ of Lemma \ref{lemm:prokernalL}, we write for all $s \in [\tau, \tau_1]$,
\begin{align*}
\|\tilde \Lambda_e(s)\|_{L^\infty} &\leq e^{-\frac{p+1}{pq-1}(s - \tau)}\|\tilde \Lambda_e(\tau)\|_{L^\infty}\\
& + \int_{\tau}^s e^{-\frac{p+1}{pq-1}(s - s')}\left(\left\|(1 - \chi(2y,s'))\tilde{F}_1(s')\right\|_{L^\infty} + \left\|(1 - \chi(2y,s'))R_1(s')\right\|_{L^\infty} \right)ds'\\
& + \int_{\tau}^s e^{-\frac{p+1}{pq-1}(s - s')}\left\|\Lambda(s')\left(\partial_s \chi(2y,s') + \Delta \chi(2y,s') + \frac{1}{2}y\cdot \nabla \chi(2y,s') \right)  \right\|_{L^\infty}ds'\\
& + \int_{\tau}^s e^{-\frac{p+1}{pq-1}(s - s')} \frac{C}{\sqrt{1 - e^{-(s - s')}}}\|\Lambda(s')\nabla \chi(2y,s')\|_{L^\infty} ds',
\end{align*}
and 
\begin{align*}
\|\tilde \Upsilon_e(s)\|_{L^\infty} &\leq e^{-\frac{q+1}{pq-1}(s - \tau)}\|\tilde \Upsilon_e(\tau)\|_{L^\infty}\\
& + \int_{\tau}^s e^{-\frac{q+1}{pq-1}(s - s')}\left(\left\|(1 - \chi(2y,s'))\tilde{F}_2(s')\right\|_{L^\infty} + \left\|(1 - \chi(2y,s'))R_2(s')\right\|_{L^\infty} \right)ds'\\
& + \int_{\tau}^s e^{-\frac{q+1}{pq-1}(s - s')}\left\|\Upsilon(s')\left(\partial_s \chi(2y,s') + \mu\Delta \chi(2y,s') + \frac{1}{2}y\cdot \nabla \chi(2y,s') \right)  \right\|_{L^\infty}ds'\\
& + \int_{\tau}^s e^{-\frac{q+1}{pq-1}(s - s')} \frac{C}{\sqrt{1 - e^{-(s - s')}}}\|\Upsilon(s')\nabla \chi(2y,s')\|_{L^\infty} ds',
\end{align*}
From the definition \eqref{def:chi} of $\chi$ and  part $(i)$ of Proposition \ref{prop:properVA}, we have 
\begin{align*}
&\left\|\Lambda(s')\left(\partial_s \chi(2y,s') + \Delta \chi(2y,s') + \frac{1}{2}y\cdot \nabla \chi(2y,s') \right)  \right\|_{L^\infty}\\
& + \left\|\Upsilon(s')\left(\partial_s \chi(2y,s') + \mu\Delta \chi(2y,s') + \frac{1}{2}y\cdot \nabla \chi(2y,s') \right)  \right\|_{L^\infty}\\
&\leq C\left(1 + \frac{1}{K^2s'^2} \right)\left(\|\Lambda(s')\|_{L^\infty(|y| \leq K\sqrt {s'})} + \|\Upsilon(s')\|_{L^\infty(|y| \leq K\sqrt {s'})} \right) \leq \frac{CA^{M+1}}{\sqrt {s'}},
\end{align*}
and 
\begin{align*}
&\|\Lambda(s')\nabla \chi(2y,s')\|_{L^\infty} + \|\Upsilon(s')\nabla \chi(2y,s')\|_{L^\infty}\\
& \leq \frac{C}{K\sqrt {s'}}\left(\|\Lambda(s')\|_{L^\infty(|y| \leq K\sqrt {s'})} + \|\Upsilon(s')\|_{L^\infty(|y| \leq K\sqrt {s'})} \right) \leq \frac{CA^{M+1}}{s'}.
\end{align*}
Recalling from \eqref{eq:boundR1R2} the bound for $R_1$ and $R_2$, we have 
$$\left\|(1 - \chi(2y,s'))R_i(s')\right\|_{L^\infty} \leq \frac{C}{s'}, \quad i = 1,2.$$
We also have
\begin{align*}
\left\|(1 - \chi(2y,s'))\tilde{F}_1(s')\right\|_{L^\infty}& \leq C \left( \|\psi(s')\|^{p-1}_{L^\infty(|y| \geq K\sqrt {s'})} + \|\Upsilon(s')\|^{p-1}_{L^\infty(|y| \geq K\sqrt {s'})} \right)\|\tilde \Upsilon_e(s')\|_{L^\infty}\\
& \leq  \frac{r + 1}{2(pq - 1)}\|\tilde \Upsilon_e(s')\|_{L^\infty},
\end{align*}
for $K$ large enough, where 
$$r = \min\{p,q\}.$$
Similarly, 
$$\left\|(1 - \chi(2y,s'))\tilde{F}_2(s')\right\|_{L^\infty} \leq  \frac{r + 1}{2(pq - 1)}\|\tilde \Lambda_e(s')\|_{L^\infty}.$$
If we set $\lambda(s) = \|\tilde \Lambda_e(s)\|_{L^\infty} + \|\tilde \Upsilon_e(s)\|_{L^\infty}$, then we end up with 
\begin{align*}
\lambda(s) &\leq e^{-\frac{r + 1}{pq - 1}(s - \tau)}\lambda(\tau)\\
&+\int_{\tau}^s e^{-\frac{r + 1}{pq-1}(s - s')}\left(\frac{r + 1}{2(pq-1)} \lambda(s') + \frac{CA^{M+1}}{\sqrt{s'}} + \frac{CA^{M+1}}{s' \sqrt{1 - e^{-(s - s')}}}\right)ds'.
\end{align*}
Applying Lemma \ref{lemm:Gronwall}, we finally obtain
$$\lambda(s) \leq e^{-\frac{r + 1}{2(pq - 1)}(s - \tau)}\lambda(\tau) + \frac{CA^{M+1}}{\sqrt{s}}(s - \tau + \sqrt{s - \tau}).$$
Since $\text{supp}(1 - \chi(y,s)) \subset \text{supp}(1 - \chi(2y,s))$, we have $\|\Lambda_e\|_{L^\infty} \leq \|\tilde \Lambda_e\|_{L^\infty}$ and $\|\Upsilon_e\|_{L^\infty} \leq \|\tilde \Upsilon_e\|_{L^\infty}$. This concludes the proof of part $(v)$ of Proposition \ref{prop:dyn}.

\section{Stability of the constructed solution.}\label{sec:stab}
In this section we give the proof of Theorem \ref{theo2}. The proof strongly relies on the same ideas used in the proof of Theorem \ref{theo1}. That is  the use of finite-dimensional parameters, the reduction to a finite-dimensional problem and the continuity. As the proof of Theorem \ref{theo1}, we only give the proof of Theorem \ref{theo2} in the one dimensional case for simplicity, however, the same proof holds for higher dimensions. We claim the following which directly follows Theorem \ref{theo2}:

\begin{proposition} \label{prop:stab} Let $(\hat u_0, \hat v_0)$ be the initial data of system \eqref{PS} such that the corresponding solution $(\hat u, \hat v)$ blows up in finite time $\hat T$ at only one blowup point $\hat a$ and $(\hat u(x, t), \hat v(x,t))$ satisfies \eqref{eq:asyTh1} with $T = \hat T$ and $a = \hat a$. Then, there exist $B_0 \geq 1$, $s_0 \geq 1$,  a neighborhood $\Ec_{s_0}$ of $(\hat T, \hat a)$ in $\Rb^2$ and a neighborhood $\mathscr{W}_0$ of $(\hat u_0, \hat v_0)$ in $L^\infty(\Rb) \times L^\infty(\Rb)$ such that the following holds: for any $(u_0, v_0) \in \mathscr{W}_0$, there exists $(T,a) \in \Ec_{s_0}$ such that for all $s \geq s_0$, $\binom{\Lambda_{T,a}(s)}{\Upsilon_{T,a}(s)} \in \Vc_{B_0}(s)$, where
\begin{equation}\label{def:LUTastab}
\Lambda_{T,a}(y,s) = \Phi_{T,a}(y,s) - \varphi(y,s), \quad \Upsilon_{T,a}(y,s) = \Psi_{T,a}(y,s) - \psi
(y,s),
\end{equation}
where $(\Phi_{T,a},\Psi_{T,a})$ is defined as in \eqref{def:simiVars} with $(u,v)$ is the unique solution of \eqref{PS} with initial data $(u_0, v_0)$, and $\varphi$, $\psi$ are defined in \eqref{def:varphiys} and \eqref{def:psiys}.
\end{proposition}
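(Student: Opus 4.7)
The plan is to adapt the topological argument of Proposition \ref{prop:goalVA}, replacing the artificial parameters $(d_0, d_1) \in \Rb^2$ of the initial data \eqref{eq:intialdata} by the intrinsic blowup parameters $(T, a) \in \Rb^2$. This is the \emph{geometrical interpretation} announced in the introduction: the dimension of $(T, a)$ matches that of the finite-dimensional problem because time and space translations of \eqref{PS} generate exactly the two non-negative eigenmodes $\binom{f_0}{g_0}$ and $\binom{f_1}{g_1}$ of $\Hc + \Mc$ (eigenvalues $\lambda_0 = 1$ and $\lambda_1 = 1/2$, see Lemma \ref{lemm:diagonal}). Applying Theorem \ref{theo1} to $(\hat u_0, \hat v_0)$ supplies $A \geq 1$ and $s_{0,1}(A)$ such that $\binom{\hat \Lambda(s)}{\hat \Upsilon(s)} := \binom{\Lambda_{\hat T, \hat a}(s)}{\Upsilon_{\hat T, \hat a}(s)} \in \Vc_A(s)$ for all $s \geq s_{0,1}$; I fix $s_0 \geq s_{0,1}$ and $B_0 \geq A$ large enough that $\binom{\hat \Lambda(s_0)}{\hat \Upsilon(s_0)}$ lies in the \emph{strict} interior of $\Vc_{B_0}(s_0)$ in every component.

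\textbf{Step 1: continuity and parametrization.} Standard $L^\infty$ well-posedness of \eqref{PS} on the compact interval $[0, T - e^{-s_0}]$ yields neighborhoods $\mathscr{W}_0$ of $(\hat u_0, \hat v_0)$ and $\Ec_{s_0}$ of $(\hat T, \hat a)$ such that the map
$$((u_0, v_0), T, a) \longmapsto \binom{\Lambda_{T,a}(\cdot, s_0)}{\Upsilon_{T,a}(\cdot, s_0)} \in L^\infty(\Rb) \times L^\infty(\Rb)$$
is well-defined and continuous on $\mathscr{W}_0 \times \Ec_{s_0}$. For $(u_0, v_0) = (\hat u_0, \hat v_0)$, differentiating \eqref{def:simiVars} at $(\hat T, \hat a)$ gives $\partial_T \Phi_{T,a}(y, s_0) = e^{-\alpha s_0}\, \partial_t \hat u$ and $\partial_a \Phi_{T,a}(y, s_0) = e^{-\alpha s_0}\, \partial_x \hat u$ (with $\alpha = (p+1)/(pq-1)$, evaluated at $\hat x = \hat a + y e^{-s_0/2}$, $\hat t = \hat T - e^{-s_0}$), and analogously for $\Psi$. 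Inserting the asymptotic profile \eqref{eq:asyTh1} and expanding, $\partial_T \Phi \sim \alpha \Gamma\, e^{s_0}$ is even in $y$ to leading order, while $\partial_a \Phi \sim -(2\alpha b \Gamma/s_0)\, y\, e^{s_0/2}$ is odd in $y$. Projecting via Lemma \ref{lemm:DefProjection}, the Jacobian of
$$\Xi_{(u_0, v_0)} : (T, a) \longmapsto \bigl(\theta_{0,T,a}(s_0),\, \theta_{1,T,a}(s_0)\bigr)$$
at $(\hat u_0, \hat v_0, \hat T, \hat a)$ is then essentially diagonal, its diagonal entries being respectively of order $e^{s_0}$ and $e^{s_0/2}/s_0$, and the off-diagonal entries vanishing to leading order by parity. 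Shrinking $\Ec_{s_0}$, $\Xi_{(\hat u_0, \hat v_0)}$ is a diffeomorphism of $\Ec_{s_0}$ onto $[-B_0 s_0^{-2}, B_0 s_0^{-2}]^2$ and sends $\partial \Ec_{s_0}$ to $\partial([-B_0 s_0^{-2}, B_0 s_0^{-2}]^2)$ with topological degree one.

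\textbf{Step 2: topological conclusion.} By the continuity of Step 1, after further shrinking $\mathscr{W}_0$ one can ensure that for every $(u_0, v_0) \in \mathscr{W}_0$, $\Xi_{(u_0, v_0)}$ still has degree one on $\partial \Ec_{s_0}$ and that the components of $\binom{\Lambda_{T,a}(s_0)}{\Upsilon_{T,a}(s_0)}$ other than $(\theta_0, \theta_1)$ satisfy the strict inequalities of $\Vc_{B_0}(s_0)$; this is the precise analog of Proposition \ref{prop:properinti}$(ii)$ in the present setting. Since Proposition \ref{prop:redu} uses only that the trajectory belongs to $\Vc_{B_0}$ and never exploits the specific form \eqref{eq:intialdata} of the initial data, the transverse outgoing crossing combined with the degree-one boundary behavior produces, by the same index-theoretic contradiction as in the proof of Proposition \ref{prop:goalVA}, a pair $(T, a) = (T(u_0, v_0), a(u_0, v_0)) \in \Ec_{s_0}$ for which $\binom{\Lambda_{T,a}(s)}{\Upsilon_{T,a}(s)} \in \Vc_{B_0}(s)$ for all $s \geq s_0$, which is Proposition \ref{prop:stab}.

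\textbf{Main obstacle.} The delicate part is the quantitative non-degeneracy claim of Step 1: one must rigorously verify, from the profile estimate \eqref{eq:asyTh1} and the explicit projection formulas of Lemma \ref{lemm:DefProjection}, that the Jacobian of $\Xi_{(\hat u_0, \hat v_0)}$ is dominated by its diagonal, whose entries grow exponentially in $s_0$. In other words, one has to turn the heuristic ``$(T, a)$ generates the unstable modes of $\Hc + \Mc$'' into a precise statement compatible with the quantitative $\Vc_A$-control. Once this is established, no genuinely new ingredient is needed: the index argument of the existence proof carries over word for word, with $(T, a)$ playing the role of $(d_0, d_1)$.
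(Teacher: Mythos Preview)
Your proposal is correct and follows essentially the same approach as the paper: replace the artificial parameters $(d_0,d_1)$ by the blowup parameters $(T,a)$, establish an analogue of Proposition~\ref{prop:properinti} for this new parametrization (this is the paper's Proposition~\ref{prop:initstab}), and rerun the topological argument of Proposition~\ref{prop:goalVA} verbatim, using that Proposition~\ref{prop:redu} is insensitive to the specific form of the initial data.

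Two minor remarks. First, where you write ``Applying Theorem~\ref{theo1} to $(\hat u_0,\hat v_0)$ supplies $A\geq 1$\ldots'', this is slightly imprecise: Theorem~\ref{theo1} is an existence statement, whereas here $(\hat u,\hat v)$ is a \emph{given} solution satisfying \eqref{eq:asyTh1}, and one needs a (short) trapping argument to conclude $\binom{\hat\Lambda(s)}{\hat\Upsilon(s)}\in\Vc_{\hat A}(s)$ for some $\hat A$ and all large $s$. The paper glosses over this in the same way. Second, the paper carries out your Step~1 not by differentiating in $(T,a)$ but by writing an explicit change-of-variables formula \eqref{def:barLamstab}--\eqref{def:taualpha} expressing $\binom{\bar\Lambda_0}{\bar\Upsilon_0}(T,a,u_0,v_0,\sigma_0)$ in terms of $\binom{\hat\Lambda}{\hat\Upsilon}$ at the shifted point, and then expanding in the small quantities $\tau=(T-\hat T)e^{\sigma_0}$ and $\alpha=(a-\hat a)e^{\sigma_0/2}$; the resulting Jacobian computation is deferred to \cite{MZdm97,TZpre15}. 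Your direct differentiation is equivalent and your scalings ($\partial_T\theta_0\sim e^{s_0}$, $\partial_a\theta_1\sim e^{s_0/2}/s_0$) match the paper's explicit bounds on $\bar\Dc_{B,\sigma_0,u_0,v_0}$ in Proposition~\ref{prop:initstab}. The sign of the degree (you say $+1$, the paper says $-1$) is an orientation convention and is irrelevant to the index argument.
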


\medskip 

Indeed, once  Proposition \ref{prop:stab} is proved, we deduce from part $(ii)$ of Proposition \ref{prop:properVA} and \eqref{def:simiVars} that \eqref{eq:asyTh1} holds for $(u,v)$. Then, Proposition \ref{prop:Noblowup} applied to $(u,v)$ shows that $(u,v)$ blows up at time $T$ at one single point $a$. Since part $(iii)$ of Theorem \ref{theo1} follows from part $(ii)$, we conclude the proof of Theorem \ref{theo2} assuming that Proposition \ref{prop:stab} holds. \\ 

Let us now give the proof of Proposition \ref{prop:stab}. The proof is anagolous to the case of   equation \eqref{eq:Scalar} treated in \cite{MZdm97} (see also \cite{TZpre15}). For the reader's convenience, we give here the main idea of the proof. The interested reader is kindly referred to the stability section in \cite{MZdm97} and \cite{TZpre15} for more details.   \\

We consider $(\hat u, \hat v)$ the constructed solution of system \eqref{PS} in Theorem \ref{theo1}, and call $(\hat u_0, \hat v_0)$ its initial data in $L^\infty(\Rb) \times L^\infty(\Rb)$, and $(\hat T, \hat a)$ its blowup time and blowup point. From the construction method given in Section \ref{sec:existence}, we consider $\hat A \geq 1$ such that 
\begin{equation*}
\binom{\hat \Lambda(s)}{\hat \Upsilon(s)} \in \Vc_{\hat A}(s) \quad \text{for all}\; s \geq - \log \hat T,
\end{equation*}
where 
\begin{align}
\hat{\Lambda}(y,s) &= \hat \Phi(y,s) - \varphi(y,s), \quad \hat{\Phi}(y,s) = e^{-\frac{(p+1)s}{pq-1}}\hat u \left(\hat a + ye^{-\frac{s}{2}}, \hat T - e^{-s}\right),\label{def:Lamhat}\\
\hat{\Upsilon}(y,s) &= \hat \Psi(y,s) - \psi(y,s), \quad \hat{\Psi}(y,s) = e^{-\frac{(q+1)s}{pq-1}}\hat v \left(\hat a + ye^{-\frac{s}{2}}, \hat T - e^{-s}\right),\label{def:Upshat}
\end{align}
and $\varphi, \psi$ are defined in \eqref{def:varphiys} and \eqref{def:psiys}. 

Let $\epsilon_0 > 0$, we consider $(u_0, v_0) \in  L^\infty(\Rb) \times L^\infty(\Rb)$ such that 
$$(h_0, g_0) = (u_0 - \hat u_0, v_0 - \hat v_0), \quad \|h_0\|_{L^\infty(\Rb)} + \|g_0\|_{L^\infty(\Rb)} \leq \epsilon_0.$$
We denote by $(u,v)_{u_0,v_0}$ the solution of system \eqref{PS} with the initial data $(u_0,v_0)$, and by $T(u_0, v_0) \leq +\infty$ the maximal time of existence from the Cauchy theory in $ L^\infty(\Rb) \times L^\infty(\Rb)$. 

Our aim is to show that, if $\epsilon_0$ is small enough, then $T(u_0,v_0) < +\infty$ and $(u,v)_{u_0,v_0}$ blows up in finite time $T(u_0,v_0)$ only at one blowup point $a(u_0,v_0)$ with
$$|T(u_0, v_0) - \hat T| + |a(u_0, v_0) - \hat a| \to 0 \quad \text{as}\quad \epsilon_0 \to 0.$$
Moreover, there exist $B \geq 1$ and $s_0 \geq - \log \hat T$ large enough such that 
$$\binom{\Lambda_{T,a}(s)}{\Upsilon_{T,a}(s)} \in \Vc_B(s) \quad \text{for all}\; s \geq s_0,$$
where $\Lambda_{T,a}$ and $\Upsilon_{T,a}$ are defined in \eqref{def:LUTastab}.

Introducing for all  $x \in \Rb$ and $t \in \big[0, \min\{T(u_0,v_0), \hat T\}\big)$,
$$(h(x,t), g(x,t)) = (u(x,t) - \hat u(x,t), v(x,t) - \hat v(x,t)),$$
we see from \eqref{def:LUTastab}, \eqref{def:Lamhat}, \eqref{def:Upshat} and \eqref{def:simiVars} that for any $\sigma_0 \in [-\log \hat T, - \log(\hat T - T(u_0,v_0))_+)$, 
\begin{align}
\Lambda_{T,a, u_0,v_0}(y,s_0) &\equiv \bar{\Lambda}_0( T, a, u_0, v_0,y, \sigma_0)\nonumber \\
& = (1 + \tau)^\frac{p+1}{pq-1}\left[\hat \Lambda(z,\sigma_0) + \varphi(z, \sigma_0) \right] - \varphi(y,s_0)\nonumber\\
& + (1 + \tau)^\frac{p+1}{pq-1}e^{-\frac{(p+1)\sigma_0}{pq-1}}\,h\left(ze^{-\frac{\sigma_0}{2}}, \hat T - e^{-\sigma_0}\right), \label{def:barLamstab}\\
\Upsilon_{T,a, u_0, v_0}(y,s_0) &\equiv  \bar{\Upsilon}_0( T, a, u_0, v_0, y, \sigma_0) \nonumber \\
& = (1 + \tau)^\frac{q+1}{pq-1}\left[\hat \Upsilon(z,\sigma_0) + \psi(z, \sigma_0)\right] - \psi(y,s_0),\nonumber\\
& + (1 + \tau)^\frac{q+1}{pq-1}e^{-\frac{(q+1)\sigma_0}{pq-1}}\,g\left(ze^{-\frac{\sigma_0}{2}}, \hat T - e^{-\sigma_0}\right), \label{def:barUpstab}
\end{align}
where 
\begin{equation}\label{def:taualpha}
\tau = (T - \hat T)e^{\sigma_0}, \quad z = y\sqrt{1 + \tau} + \alpha, \quad \alpha = (a - \hat a)e^\frac{\sigma_0}{2}, \quad s_0 = \sigma_0 - \log (1+\tau).
\end{equation}

In view of \eqref{def:barLamstab} and \eqref{def:barUpstab}, $\binom{\bar \Lambda_0}{\bar \Upsilon_0}(T,a, u_0, v_0, \sigma_0)$ appears as initial data for system \eqref{eq:LamUp} at time $s = s_0(\sigma_0,\tau)$ and our parameters is now $(T,a)$ replacing $(d_0,d_1)$ in \eqref{eq:intialdata}. In particular, we have the following property:
\begin{proposition}[Properties of initial data $\binom{\bar \Lambda_0}{\bar \Upsilon_0}(T,a, u_0, v_0, \sigma_0)$ given in \eqref{def:barLamstab} and \eqref{def:barUpstab}] \label{prop:initstab} There exists $B_0 = B_0(M, \hat A) \geq 1$ such that for any $B \geq B_0$, there exists $\sigma_0'(B) \geq 1$ large enough such that for any $\sigma_0 \geq \sigma_0'$, there exists $\epsilon_0(\sigma_0) > 0$ small enough such that 
$$\|u_0 - \hat u_0\|_{L^\infty(\RN)} + \|v_0 - \hat v_0\|_{L^\infty(\RN)} \leq \epsilon_0(\sigma_0),$$
and the following hold:\\
$(i)\;$ There exists a set 
\begin{equation*}
\bar \Dc_{B,\sigma_0, u_0, v_0} \subset \left\{(T,a) \Big \vert |T-\hat T| \leq \frac{2Be^{-\sigma_0}(pq-1)}{\sigma^2_0}, \; |a - \hat a| \leq \frac{Be^{-\frac{\sigma_0}{2}}(pq-1)}{b \sigma_0}  \right\},
\end{equation*}
whose boundary is a Jordan curve such that the mapping 
$$(T,a) \mapsto (\bar \theta_{0,0}, \bar \theta_{0,1})(T,a, u_0,v_0,\sigma_0), \quad s_0 = \sigma_0 - \log (1 + \tau),$$
(where $\bar \theta_{0,i} = P_{i,M}\binom{\bar{\Lambda}_0}{\bar \Upsilon_0}$ for $i=0,1$ and $\binom{\bar{\Lambda}_0}{\bar \Upsilon_0}$ stands for $\binom{\bar{\Lambda}_0}{\bar \Upsilon_0}(T,a, u_0,v_0, \sigma_0)$, is one to one from $\bar \Dc_{B,\sigma_0, u_0,v_0}$ onto $\left[-\frac{B}{s_0^2}, \frac{B}{s_0^2}\right]^{N+1}$. Moreover, it is of degree $-1$ on the boundary.\\
$(ii)\;$ For all $(T,a) \in \bar \Dc_{B, \sigma_0, u_0, v_0}$, $\binom{\bar{\Lambda}_0}{\bar \Upsilon_0}$ verifies 
$$\bar \Lambda_{0,e} < \frac{B^{M+2}}{\sqrt {s_0}},\quad  \bar \Upsilon_{0,e} < \frac{B^{M+2}}{\sqrt{s_0}},$$

$$ \left\|\frac{\bar \Lambda_{0,-}(y)}{1 + |y|^{M+1}} \right\| < \frac{B^{M+1}}{s_0^{\frac{M+2}{2}}},\quad  \left\|\frac{\bar \Upsilon_{0,-}(y)}{1 + |y|^{M+1}} \right\| < \frac{B^{M+1}}{s_0^{\frac{M+2}{2}}},$$

$$|\bar \theta_{0,j}|< \frac{B^j}{s_0^\frac{j+1}{2}},\quad |\tilde{\bar \theta}_{0,j}| < \frac{A^j}{s_0^\frac{j+1}{2}} \;\; \text{for}\;\; 3\leq j\leq M,$$

$$ |\tilde {\bar\theta}_{0,i}| < \frac{B^2}{s_0^2}\;\; \text{for}\;\; i = 0, 1,2,$$

$$|\bar \theta_{0,2}| < \frac{B^4 \log s_0}{s_0^2},$$

$$|\bar \theta_{0,0}| \leq \frac{A}{s_0^2}, \quad |\bar \theta_{0,1}| \leq \frac{A}{s_0^2}.$$
\end{proposition}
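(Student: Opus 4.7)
The approach is to run the same machinery as in the preparation of initial data for the existence proof (Proposition \ref{prop:properinti}), but with the parameters $(d_0,d_1)$ replaced by the modulation parameters $(T,a)$, and with an additional $(u_0,v_0)$-dependent perturbation on top of the reference solution $(\hat u,\hat v)$. The plan is to decompose
\begin{equation*}
\binom{\bar\Lambda_0(y,\sigma_0)}{\bar\Upsilon_0(y,\sigma_0)} = \binom{\hat\Lambda(y,\sigma_0)}{\hat\Upsilon(y,\sigma_0)} + \binom{\Lambda^{\mathrm{mod}}_{\tau,\alpha}}{\Upsilon^{\mathrm{mod}}_{\tau,\alpha}}(y,\sigma_0) + \binom{h_{\sigma_0}(y)}{g_{\sigma_0}(y)},
\end{equation*}
where the first term lies in $\Vc_{\hat A}(\sigma_0)$ by construction, the second term collects the Taylor expansion of \eqref{def:barLamstab}--\eqref{def:barUpstab} in $(\tau,\alpha)$ around $(0,0)$ (using the relations \eqref{def:taualpha}), and the third term is the initial perturbation rescaled via similarity variables, which satisfies $\|h_{\sigma_0}\|_{L^\infty}+\|g_{\sigma_0}\|_{L^\infty}\le C(\sigma_0)\epsilon_0$.

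\textbf{Part $(ii)$ and the easy components.} Since the constructed solution satisfies $\binom{\hat\Lambda(\sigma_0)}{\hat\Upsilon(\sigma_0)}\in\Vc_{\hat A}(\sigma_0)$, each component is bounded by the $\hat A$-version of the inequalities in Definition \ref{def:VA}. Choosing $B_0\ge B_0(M,\hat A)$ so that $B_0^{j}\gg\hat A^j$ for $j\le M+2$ creates room for the perturbations. For the modulation term, one checks that $\Lambda^{\mathrm{mod}}$ and $\Upsilon^{\mathrm{mod}}$ are smooth polynomial-type quantities in $\tau,\alpha$, so that on the parameter range $|\tau|\lesssim B/\sigma_0^2$, $|\alpha|\lesssim B/\sigma_0$, their components on $\binom{f_j}{g_j}$, $\binom{\tilde f_j}{\tilde g_j}$ and their infinite-dimensional and outer parts are $O(1/\sigma_0^{(j+1)/2+\eta})$ with $\eta>0$, i.e.\ higher order than needed. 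For the perturbation term, fixing $\sigma_0$ first and then taking $\epsilon_0=\epsilon_0(\sigma_0)$ sufficiently small makes $\binom{h_{\sigma_0}}{g_{\sigma_0}}$ arbitrarily small in $L^\infty$; projecting via Lemma \ref{lemm:DefProjection} and using $(iii)-(iv)$ of Lemma \ref{lemm:prokernalL}, each component is bounded by $C(\sigma_0)\epsilon_0$, which is dwarfed by the $B$-thresholds. All the strict inequalities in Proposition \ref{prop:initstab}$(ii)$ follow.

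\textbf{Part $(i)$ and the positive modes.} The crucial point is the behaviour of the two projections $(\bar\theta_{0,0},\bar\theta_{0,1})$. Differentiating \eqref{def:barLamstab}--\eqref{def:barUpstab} at $(\tau,\alpha)=(0,0)$, the leading contributions come from the prefactor $(1+\tau)^{\frac{p+1}{pq-1}}$ applied to the static profile $\varphi$ (dominantly a multiple of $\binom{f_0}{g_0}$ at leading order in $1/\sigma_0$) and from the shift $z=y\sqrt{1+\tau}+\alpha$ applied to $\varphi,\psi$ (giving via $\partial_y\varphi,\partial_y\psi$ the $\binom{f_1}{g_1}$ direction). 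A direct expansion using \eqref{def:varphiys}--\eqref{def:psiys} and Remark \ref{rema:012} gives, to leading order,
\begin{equation*}
\binom{\bar\theta_{0,0}}{\bar\theta_{0,1}}(T,a,u_0,v_0,\sigma_0)
 = \binom{\hat\theta_{0}(\sigma_0)}{\hat\theta_{1}(\sigma_0)} + \mathbf{J}(\sigma_0)\binom{\tau}{\alpha} + R(\tau,\alpha,u_0,v_0,\sigma_0),
\end{equation*}
where $\mathbf{J}(\sigma_0)$ is diagonal (up to lower order) with non-vanishing entries of size $\Theta(1)$ and $\Theta(1)$ respectively after the natural rescaling dictated by \eqref{def:taualpha}, the signs of these entries being negative (increasing $T$ decreases $\bar\theta_{0,0}$, translating $a$ decreases $\bar\theta_{0,1}$ in a one-to-one sign-reversing manner), and $R$ is a remainder that is either quadratic in $(\tau,\alpha)$ or controlled by $C(\sigma_0)\epsilon_0$. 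Composing with the rescaling $(\tau,\alpha)\to s_0^2(\bar\theta_{0,0},\bar\theta_{0,1})/B$, we get a continuous map from a suitable rectangle onto $[-1,1]^2$ whose restriction to the boundary is a degree-$-1$ perturbation of the standard map, by the continuity and transversality encoded in $\mathbf{J}$. Defining $\bar\Dc_{B,\sigma_0,u_0,v_0}$ as the preimage of $[-B/s_0^2,B/s_0^2]^2$ yields the desired set.

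\textbf{Main obstacle.} The principal difficulty is the bookkeeping around $\mathbf{J}(\sigma_0)$: one must track precisely which part of $\partial_\tau\varphi,\partial_\tau\psi,\partial_\alpha\varphi,\partial_\alpha\psi$ at $\tau=\alpha=0$ projects onto $\binom{f_0}{g_0}$ and $\binom{f_1}{g_1}$, to confirm both the non-degeneracy of $\mathbf{J}$ and the correct orientation yielding degree $-1$ (rather than $+1$ as in Proposition \ref{prop:properinti}). A secondary difficulty is the propagation of the perturbation $(h_0,g_0)\to(h_{\sigma_0},g_{\sigma_0})$ via the local Cauchy theory for \eqref{PS} on the time interval $[0,\hat T-e^{-\sigma_0}]$: since $\sigma_0$ is fixed before $\epsilon_0$, continuity in $L^\infty\times L^\infty$ of the flow, together with the fact that $\hat T-e^{-\sigma_0}<\hat T$ is a strict sub-blowup time for $(\hat u,\hat v)$, is enough to conclude.
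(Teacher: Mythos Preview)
Your approach is correct and is essentially the one the paper indicates: the paper does not give a detailed proof but simply says that the result follows from the expansion of $\binom{\bar\Lambda_0}{\bar\Upsilon_0}$ in \eqref{def:barLamstab}--\eqref{def:barUpstab} for $(T,a)$ close to $(\hat T,\hat a)$, and refers the reader to Lemma B.4 in \cite{MZdm97} and Lemma 6.2 in \cite{TZpre15} for the analogous computations. Your decomposition into the reference part $\binom{\hat\Lambda}{\hat\Upsilon}\in\Vc_{\hat A}(\sigma_0)$, the modulation part in $(\tau,\alpha)$, and the $\epsilon_0$-small perturbation propagated by the Cauchy theory on $[0,\hat T-e^{-\sigma_0}]$, together with the Jacobian analysis for the positive modes, is exactly the content of those references adapted to the present setting.
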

\begin{proof} The proof directly follows from the expansion of $\binom{\bar{\Lambda}_0}{\bar \Upsilon_0}$ given in \eqref{def:barLamstab} and \eqref{def:barUpstab} for $(T,a)$ close to $(\hat T, \hat a)$. It happens that the proof is completely analogous to the case of equation \eqref{eq:Scalar} treated in \cite{MZdm97} (see also \cite{TZpre15}). For this reason, we omit the proof and kindly refer the interested reader to  Lemma B.4, page 186 in \cite{MZdm97} and Lemma 6.2 in \cite{TZpre15} for analogous proofs.
\end{proof}

With the result of Proposition \ref{prop:initstab} in hands, we are ready to complete the proof of Proposition \ref{prop:stab}. Recall that in the existence proof given in Section \ref{sec:existence}, we had to specific choice of the two parameters $(d_0,d_1) \in \Rb^2$ appearing in \eqref{eq:intialdata} in order to guarantee that $\binom{\Lambda(s)}{\Upsilon(s)}_{d_0,d_1} \in \Vc_A(s)$ for all $s \geq s_0$ for some $A \geq 1$ and $s_0(A) \geq 1$ large enough. In particular, we choose $(d_0,d_1) \in \Dc_{s_0}$ so that the initial data at $s= s_0$  of \eqref{eq:LamUp} is small in $\Vc_A(s_0)$. Together with the dynamics of system \eqref{eq:LamUp} given in Proposition \ref{prop:dyn}, we show that it stays small in $\Vc_A(s)$ up to $s = s_0 + \lambda$ for some $\lambda= \log A$ (see Subsection \ref{sec:redufromdyn}). In the case $s \geq s_0 + \lambda$, we didn't use the data at $s = s_0$, we only used  Proposition \ref{prop:dyn} to derive the smallness of the solution. In particular, we derive the so-called reduction of the problem to a finite-dimensional one (see Proposition \ref{prop:redu}). Then the topological argument for the finite-dimensional problem involving two parameters $(d_0,d_1)$ allows us to conclude the existence of $(d_0,d_1) \in \Dc_{s_0}$ such that the solution of \eqref{eq:LamUp} with initial data \eqref{eq:intialdata} is trapped in $\Vc_A(s)$ for all $s \geq s_0$. Now, starting from $\binom{\bar \Lambda_0}{\bar \Upsilon_0}(T,a, u_0,v_0, \sigma_0)$ at time $s = s_0$ and applying the same procedure as for the existence proof including the reduction to a finite dimensional problem (see Proposition \ref{prop:redu}) and the topological argument involving the two parameters $(T,a)$, we end-up with the existence of $(\bar T(u_0,v_0), \bar a(u_0,v_0)) \in \bar \Dc_{B, \sigma_0, u_0,v_0}$ such that system \eqref{eq:LamUp} with initial data at time $s = s_0$, $\binom{\bar \Lambda_0}{\bar \Upsilon_0}(\bar T(u_0,v_0), \bar a(u_0,v_0), u_0,v_0, \sigma_0)$, has a solution $\binom{\bar \Lambda}{\bar \Upsilon}_{\sigma_0, u_0,v_0}$ such that 
\begin{equation*}
\binom{\bar \Lambda(s)}{\bar \Upsilon(s)}_{ u_0,v_0,\sigma_0} \in \Vc_B(s) \quad \text{for all}\; s \geq s_0.
\end{equation*}

By definition, $\binom{\bar \Lambda_0}{\bar \Upsilon_0}(\bar T(u_0,v_0), \bar a(u_0,v_0), u_0,v_0, \sigma_0)$ is the initial data also at time $s = s_0$ defined in \eqref{def:barLamstab} and \eqref{def:barUpstab}, of $\binom{\bar \Lambda}{\bar \Upsilon}_{u_0,v_0,\sigma_0}$, another solution of the same equation \eqref{eq:LamUp}. From the uniqueness of the Cauchy problem, both solutions are equal and have the same domain of the definition and the same trapping property in $\Vc_B(s)$. Reminding that $\binom{\Lambda}{\Upsilon}_{\bar T(u_0,v_0), \bar a(u_0,v_0), u_0,v_0,\sigma_0}(y,s)$ is defined for all $(y,s) \in \Rb \times \big[-\log \bar T(u_0,v_0), - \log ((\bar T(u_0,v_0) - T(u_0,v_0))_+) \big]$, which implies that 
$$\bar T(u_0,v_0) = T(u_0,v_0)$$
and 
\begin{equation*}
\binom{\Lambda(s)}{\Upsilon(s)}_{\bar T(u_0,v_0), \bar a(u_0,v_0), u_0,v_0,\sigma_0} = \binom{\bar \Lambda(s)}{\bar \Upsilon(s)}_{u_0,v_0,\sigma_0} \in \Vc_B(s) \quad \text{for all}\; s \geq s_0.
\end{equation*}
This concludes the proof of Proposition \ref{prop:stab} as well as Theorem \ref{theo2}.

\appendix

\section{Some elementary lemmas.}

The following lemma is the integral version of Gronwall's inequality:
\begin{lemma}[A Gronwall's inequality]\label{lemm:Gronwall} If $\lambda(s)$, $\alpha(s)$ and $\beta(s)$ are continuous defined on $[s_0, s_1]$ such that 
$$\lambda(s) \leq \lambda(s_0) + \int_{s_0}^s\alpha(\tau) \lambda(\tau) d\tau + \int_{s_0}^{s}\beta(\tau)d\tau, \quad s_0 \leq s\leq s_1,$$
then 
$$\lambda(s) \leq \exp\left(\int_{s_0}^s\alpha(\tau) d\tau \right) \left[\lambda(s_0) + \int_{s_0}^s \beta(\tau) \exp \left(-\int_{s_0}^\tau \alpha(\tau')d\tau' \right)d\tau\right].$$  
\end{lemma}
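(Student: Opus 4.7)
The plan is to carry out a standard integrating-factor argument applied to the cumulative integral of $\alpha\lambda$. First I would introduce the auxiliary function $\Lambda(s) := \int_{s_0}^{s} \alpha(\tau) \lambda(\tau)\, d\tau$, which is $C^1$ on $[s_0,s_1]$ with $\Lambda(s_0)=0$ and $\Lambda'(s) = \alpha(s)\lambda(s)$. Substituting the hypothesis directly into this derivative gives the differential inequality
\begin{equation*}
\Lambda'(s) \;\leq\; \alpha(s)\Bigl[\lambda(s_0) + \Lambda(s) + \int_{s_0}^{s}\beta(\tau)\,d\tau\Bigr],
\end{equation*}
which is a first-order linear inequality in $\Lambda$ with forcing term $\alpha(s)\bigl[\lambda(s_0) + \int_{s_0}^{s}\beta(\tau)d\tau\bigr]$.

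Next I would apply the integrating factor $\mu(s) := \exp\bigl(-\int_{s_0}^{s}\alpha(\tau)d\tau\bigr)$. Multiplying through by $\mu(s)$ turns the inequality into $\frac{d}{ds}\bigl[\mu(s)\Lambda(s)\bigr] \leq \mu(s)\alpha(s)\bigl[\lambda(s_0) + \int_{s_0}^{s}\beta(\tau)d\tau\bigr]$. Integrating from $s_0$ to $s$ and using $\Lambda(s_0)=0$, then dividing by $\mu(s)$, yields an explicit upper bound on $\Lambda(s)$. Finally, I would plug this bound back into the original hypothesis $\lambda(s) \leq \lambda(s_0) + \Lambda(s) + \int_{s_0}^{s}\beta(\tau)d\tau$ and simplify, using an integration-by-parts identity of the form
\begin{equation*}
\int_{s_0}^{s}\alpha(\tau)e^{\int_\tau^{s}\alpha}\Bigl(\lambda(s_0)+\int_{s_0}^{\tau}\beta\Bigr)d\tau \;=\; \bigl[\text{telescoping terms}\bigr] + \int_{s_0}^{s}\beta(\tau)e^{\int_\tau^{s}\alpha}d\tau - \int_{s_0}^{s}\beta(\tau)d\tau - \lambda(s_0),
\end{equation*}
to collapse everything into the stated closed form involving only $\exp(\int_{s_0}^{s}\alpha)$ acting on $\lambda(s_0)$ and on $\int_{s_0}^{s}\beta(\tau)\exp(-\int_{s_0}^{\tau}\alpha)\,d\tau$.

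The main obstacle is not analytical but bookkeeping: correctly absorbing the $\int_{s_0}^{s}\beta(\tau)d\tau$ term that sits outside the Gronwall loop, so that the final expression matches the stated formula exactly (with the inner integral weighted by $\exp(-\int_{s_0}^{\tau}\alpha(\tau')d\tau')$ rather than $\exp(\int_{\tau}^{s}\alpha)$). An alternative, cleaner route avoiding this cancellation is to set $\Psi(s) := \lambda(s_0) + \int_{s_0}^{s}\alpha(\tau)\lambda(\tau)d\tau + \int_{s_0}^{s}\beta(\tau)d\tau$, note $\Psi'(s) = \alpha(s)\lambda(s) + \beta(s) \leq \alpha(s)\Psi(s) + \beta(s)$ from the hypothesis $\lambda(s)\leq\Psi(s)$, and then solve this single linear differential inequality by the usual integrating factor to recover the claimed bound on $\Psi(s)$, hence on $\lambda(s)$; I would likely adopt this streamlined version to keep the proof to a few lines.
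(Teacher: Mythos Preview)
The paper does not actually prove this lemma; it simply cites Lemma~2.3 of \cite{GKcpam89}. Your proposal supplies the standard integrating-factor argument, and the streamlined second route via $\Psi(s)$ is exactly the textbook proof.

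One genuine caveat: both of your routes silently use $\alpha(s)\geq 0$. In the first route you pass from $\lambda(s)\leq \lambda(s_0)+\Lambda(s)+\int\beta$ to $\Lambda'(s)=\alpha(s)\lambda(s)\leq \alpha(s)[\cdots]$ by multiplying the inequality by $\alpha(s)$; in the second you go from $\lambda(s)\leq\Psi(s)$ to $\Psi'(s)=\alpha(s)\lambda(s)+\beta(s)\leq \alpha(s)\Psi(s)+\beta(s)$ the same way. Neither step is valid where $\alpha(s)<0$. The lemma as stated omits this hypothesis, but the cited reference includes it, and every application in the paper (the control of the infinite-dimensional part and of the outer part) has $\alpha\geq 0$. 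So your argument is correct for the result as actually used; just flag the nonnegativity assumption explicitly.
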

\begin{proof} See Lemma 2.3 in \cite{GKcpam89}.

\end{proof}

In the following lemma, we recall some linear regularity estimates of the linear operator $\Lc_\eta$ defined in \eqref{def:opH}:

\begin{lemma}[Properties of the semigroup $e^{\tau \Lc_\eta}$]\label{lemm:prokernalL} The kernel $e^{\tau \Lc_\eta}(y,x)$ of the semigroup $e^{\tau \Lc_\eta}$ is given by
\begin{equation}\label{def:kernelE}
e^{\tau \Lc_\eta}(y,x) =  \frac{1}{\big[4\pi(1 - e^{-\tau})\big]^{N/2}}\exp \left(-\frac{|ye^{-\tau/2} - x|^2}{4\eta (1 - e^{\tau})} \right), \quad \forall \tau > 0,
\end{equation}
and $e^{\tau \Lc_\eta}$ is defined by
\begin{equation}\label{def:semieL}
e^{\tau \Lc_\eta}g(y) = \int_{\RN}e^{\tau \Lc_\eta}(y,x)g(x)dx.
\end{equation}

We also have the following:\\

\noindent $(i)\;$ $\left\|e^{\tau \Lc_\eta}g\right\|_{L^\infty(\RN)} \leq \|g\|_{L^\infty(\RN)}$ for all $g \in L^\infty(\RN)$,\\

\noindent $(ii)\;$ $\left\|e^{\tau \Lc_\eta}\, \text{div} (g)\right\|_{L^\infty(\RN)} \leq \frac{C}{\sqrt{1 - e^{-\tau}}}\|g\|_{L^\infty(\RN)}$ for all $g \in L^\infty(\RN)$,\\

\noindent $(iii)\;$ If $|g(x)| \leq (1 + |x|^{M+1})$ for all $x \in \RN$, then 
$$\left|e^{\tau \Lc_\eta} \Pi_{-,M}(g(y))\right| \leq Ce^{-\frac{(M+1)\tau}{2}} (1 + |y|^{M+1}), \quad \forall y \in \RN.$$

\noindent $(iv)\;$ For all $k \geq 0$, we have 
\begin{equation*}
\left\|\frac{\Pi_{-,M}(g)}{1 + |y|^{M+k}} \right\|_{L^\infty(\RN)} \leq C\left\|\frac{g}{1 + |y|^{M + k}} \right\|_{L^\infty(\RN)}.
\end{equation*}
\end{lemma}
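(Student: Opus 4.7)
For the kernel formula \eqref{def:kernelE}, my plan is to reduce to the heat equation by rescaling. Setting $w(y,\tau):=v(ye^{-\tau/2},\,\eta(1-e^{-\tau}))$ whenever $v$ solves $\partial_tv=\Delta v$ (with the heat kernel $(4\pi t)^{-N/2}\exp(-|x-x'|^2/4t)$), a direct chain-rule computation shows that $w$ satisfies $\partial_\tau w=\Lc_\eta w$; reading off the pushforward of the heat kernel under the map $x\mapsto ye^{-\tau/2}$ at time $t=\eta(1-e^{-\tau})$ yields \eqref{def:kernelE}. In particular, $e^{\tau\Lc_\eta}(y,\cdot)$ is a Gaussian probability density for every $\tau>0$ and $y\in\RN$.

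The bounds $(i)$ and $(ii)$ are then straightforward. Since the kernel is nonnegative and integrates to $1$ in $x$ (probability density), $(i)$ follows at once by the triangle inequality under the integral. For $(ii)$, write $e^{\tau\Lc_\eta}\,\mathrm{div}(g)=\sum_i e^{\tau\Lc_\eta}(\partial_{x_i}g_i)$ and integrate by parts in $x$ against the kernel; the resulting factor $\partial_{x_i}e^{\tau\Lc_\eta}(y,x)=\frac{ye^{-\tau/2}-x_i}{2\eta(1-e^{-\tau})}\,e^{\tau\Lc_\eta}(y,x)$ produces a Gaussian first-moment whose total mass is $C/\sqrt{1-e^{-\tau}}$, giving the claimed bound.

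For $(iv)$, recall from the spectral setup that $\Pi_{-,M}g=g-\sum_{|\alpha|\le M}g_\alpha\tilde h_\alpha$ with coefficients $g_\alpha=\|\tilde h_\alpha\|_{\rho_\eta}^{-2}\int g\,\tilde h_\alpha\,\rho_\eta$. If $\|g/(1+|y|^{M+k})\|_{L^\infty}\le B$, then each $|g_\alpha|\le CB$ by a Gaussian moment estimate (since $|\tilde h_\alpha|(1+|y|^{M+k})$ is integrated against $\rho_\eta$). As $\deg\tilde h_\alpha\le M$ we get $|\sum_{|\alpha|\le M}g_\alpha\tilde h_\alpha(y)|\le CB(1+|y|^M)\le CB(1+|y|^{M+k})$, and the triangle inequality yields $(iv)$.

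The delicate part is $(iii)$, where the main obstacle is upgrading a spectral $L^2_{\rho_\eta}$-decay into an $L^\infty$-bound against a polynomial weight. The plan is to use the Mehler-type expansion
\[
e^{\tau\Lc_\eta}(y,x)=\rho_\eta(x)\sum_{\alpha\in\mathbb N^N}\frac{e^{-|\alpha|\tau/2}}{\|\tilde h_\alpha\|_{\rho_\eta}^2}\,\tilde h_\alpha(y)\,\tilde h_\alpha(x),
\]
together with the commutation $e^{\tau\Lc_\eta}\circ\Pi_{-,M}=\Pi_{-,M}\circ e^{\tau\Lc_\eta}$, which follows because both are spectral operations in the Hermite basis. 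Applied to $\Pi_{-,M}g$, the orthogonality \eqref{eq:orthohnhm} kills every term of the Mehler series with $|\alpha|\le M$; factoring out the worst surviving exponential $e^{-(M+1)\tau/2}$, one is left with $\sum_{|\alpha|\ge M+1}e^{-(|\alpha|-M-1)\tau/2}\langle g,\tilde h_\alpha\rangle_{\rho_\eta}\|\tilde h_\alpha\|^{-2}\tilde h_\alpha(y)$, a sum that must be estimated pointwise against the weight $(1+|y|^{M+1})$. To handle the polynomial growth, I would split $g=g\mathbf 1_{|x|\le R}+g\mathbf 1_{|x|>R}$ with $R=Ae^{\tau/2}$: on the inner piece the Gaussian factor gives an absolutely convergent sum with the decay $e^{-(M+1)\tau/2}(1+|y|^{M+1})$ via standard Hermite pointwise bounds, while on the outer piece $\rho_\eta(x)$ is already exponentially small in $e^\tau$, which absorbs the polynomial growth of $g$ and is dominated by $e^{-(M+1)\tau/2}(1+|y|^{M+1})$. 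Combining these two regimes and using $(iv)$ to control the finite Hermite sum that is subtracted off by $\Pi_{-,M}$ gives the desired bound.
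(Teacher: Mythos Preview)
The paper does not actually prove this lemma: it cites \cite{BKnon94} for the kernel formula, says $(i)$--$(ii)$ are ``straightforward calculations,'' and defers $(iii)$--$(iv)$ entirely to Lemmas A.2 and A.3 of \cite{MZjfa08}. Your proposal therefore goes much further than the paper itself, and for the kernel formula, $(i)$, $(ii)$, and $(iv)$ your arguments are correct and self-contained (up to the harmless notational slip in $(ii)$, where the derivative should read $(y_ie^{-\tau/2}-x_i)/(2\eta(1-e^{-\tau}))$).

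For $(iii)$, your strategy---Mehler expansion, commutation of $e^{\tau\Lc_\eta}$ with $\Pi_{-,M}$, then factoring out $e^{-(M+1)\tau/2}$---is the right one, and is essentially what \cite{MZjfa08} does. The one place your sketch is thin is the appeal to ``standard Hermite pointwise bounds'' to control the tail series $\sum_{|\alpha|\ge M+1}e^{-(|\alpha|-M-1)\tau/2}\langle g,\tilde h_\alpha\rangle\|\tilde h_\alpha\|^{-2}\tilde h_\alpha(y)$ by $C(1+|y|^{M+1})$ \emph{uniformly in $\tau>0$}. Your splitting $g=g\mathbf 1_{|x|\le Ae^{\tau/2}}+g\mathbf 1_{|x|>Ae^{\tau/2}}$ handles large $\tau$ well, but for small $\tau$ the series has no exponential damping and you would need sharp Hermite asymptotics to sum it against only a polynomial weight. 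A cleaner way to close this is to treat small and large $\tau$ separately: for $\tau\le 1$, use the commutation $e^{\tau\Lc_\eta}\Pi_{-,M}g=\Pi_{-,M}(e^{\tau\Lc_\eta}g)$, note that $|e^{\tau\Lc_\eta}g(y)|\le C(1+|y|^{M+1})$ directly from the kernel bound $(i)$ and the growth of $g$, then apply your $(iv)$, and observe that $e^{-(M+1)\tau/2}$ is bounded below on $[0,1]$; for $\tau\ge 1$ the geometric factor $e^{-(|\alpha|-M-1)/2}$ in the Mehler tail makes the series absolutely and uniformly summable against the polynomial weight, and your argument goes through.
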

\begin{proof} The expressions of $e^{\tau \Lc_\eta}(y,x)$ and $e^{\tau \Lc_\eta}$ are given in \cite{BKnon94}, page 554. The proof of $(i)-(ii)$ follow by straightforward calculations using \eqref{def:kernelE} and \eqref{def:semieL}. For $(iii) - (iv)$, see Lemmas A.2 and A.3 in \cite{MZjfa08}.
\end{proof}

\def\cprime{$'$}

%

\bigskip

\end{document}